\documentclass[12pt]{amsart}
\usepackage[margin=1.35in]{geometry}
\usepackage{amscd,amsmath,amsxtra,amsthm,amssymb,stmaryrd,xr,mathrsfs,mathtools,enumerate,commath, comment}
\usepackage{stmaryrd}
\usepackage{multirow}
\usepackage{xcolor}
\usepackage{commath}
\usepackage{comment}
\usepackage{tikz-cd}
\usepackage{longtable} 
\usepackage{pdflscape} 
\usepackage{booktabs}
\usepackage{hyperref}
\definecolor{vegasgold}{rgb}{0.77, 0.7, 0.35}
\definecolor{darkgoldenrod}{rgb}{0.72, 0.53, 0.04}
\definecolor{gold(metallic)}{rgb}{0.83, 0.69, 0.22}
\hypersetup{
 colorlinks=true,
 linkcolor=darkgoldenrod,
 filecolor=brown,      
 urlcolor=gold(metallic),
 citecolor=darkgoldenrod,
 }
\newtheorem{lthm}{Theorem}

\usepackage[all,cmtip]{xy}

\DeclareFontFamily{U}{wncy}{}
\DeclareFontShape{U}{wncy}{m}{n}{<->wncyr10}{}
\DeclareSymbolFont{mcy}{U}{wncy}{m}{n}
\DeclareMathSymbol{\Sh}{\mathord}{mcy}{"58}
\usepackage[T2A,T1]{fontenc}
\usepackage[OT2,T1]{fontenc}

\newtheorem{theorem}{Theorem}[section]
\newtheorem{lemma}[theorem]{Lemma}

\newtheorem*{theorem*}{Theorem}
\newtheorem*{ass*}{Assumption}
\newtheorem{definition}[theorem]{Definition}
\newtheorem{corollary}[theorem]{Corollary}

\newtheorem{proposition}[theorem]{Proposition}
\newtheorem{question}[theorem]{Question}

\newcommand{\cF}{\mathcal{F}}

\newcommand{\cK}{\mathcal{K}}

\newcommand{\cG}{\mathcal{G}}

\newcommand{\Z}{\mathbb{Z}}
\newcommand{\p}{\mathfrak{p}}
\newcommand{\Q}{\mathbb{Q}}
\newcommand{\F}{\mathbb{F}}

\newcommand{\cL}{\mathcal{L}}

\newcommand{\cO}{\mathcal{O}}

\newcommand{\op}[1]{\operatorname{#1}}

\newcommand\mtx[4] { \left( {\begin{array}{cc}
 #1 & #2 \\
 #3 & #4 \\
 \end{array} } \right)}

\numberwithin{equation}{section}

\begin{document}

\title[Rank distribution in cubic twist families]{Rank distribution in cubic twist families of elliptic curves}

\author[A.~Ray]{Anwesh Ray}
\address[Ray]{Chennai Mathematical Institute, H1, SIPCOT IT Park, Kelambakkam, Siruseri, Tamil Nadu 603103, India}
\email{anwesh@cmi.ac.in}

\author[P.~Shingavekar]{Pratiksha Shingavekar}
\address[Shingavekar]{Chennai Mathematical Institute, H1, SIPCOT IT Park, Kelambakkam, Siruseri, Tamil Nadu 603103, India}
\email{pshingavekar@gmail.com}

\keywords{Elliptic curves, cubic twist families, rank distribution, Selmer groups and stability, explicit computation}
\subjclass[2020]{11G05, 11R45, 11R34}

\maketitle

\begin{abstract}
Let $a$ be an integer which is not of the form $n^2$ or $-3 n^2$ for $n\in \mathbb{Z}$. Let $E_a$ be the elliptic curve with rational $3$-isogeny defined by $E_a:y^2=x^3+a$, and $K:=\mathbb{Q}(\mu_3)$. Assume that the $3$-Selmer group of $E_a$ over $K$ vanishes. It is shown that there is an explicit infinite set of cubefree integers $m$ such that the $3$-Selmer groups over $K$ of $E_{m^2 a}$ and $E_{m^4 a}$ both vanish. In particular, the ranks of these cubic twists are seen to be $0$ over $K$. Our results are proven by studying stability properties of $3$-Selmer groups in cyclic cubic extensions of $K$, via local and global Galois cohomological techniques.
\end{abstract}

\section{Introduction}

\subsection{Motivation and historical background}
\par Given an elliptic curve $E$ over $\Q$, and a subgroup $H$ of $\op{Aut}_{\bar{\Q}}(E)$, it is natural to consider the twist family of elliptic curves $E_\zeta$ indexed by cohomology classes $\zeta\in H^1(\op{Gal}(\bar{\Q}/\Q), H)$. When $H=\{\pm 1\}$, the associated twists are called quadratic twists and are in bijection with $\Q^\times /(\Q^\times)^2$. Goldfeld's conjecture predicts that the average rank of quadratic twists $E_m$ over $\Q$ is $1/2$. In fact, the conjecture predicts that when $m$ is ordered according to its absolute value, exactly half of the quadratic twists have rank $0$ and the other half have rank $1$. The rank distribution in quadratic twist families is studied for instance in \cite{Brumer,HeathBrown, MazurRubin, smith,KrizLi, bhargava}. Set $K:=\Q(\mu_3)$ and let $\cO_K$ be its ring of integers. We consider certain families of elliptic curves with rational $3$-isogeny that have complex multiplication by $\cO_K$. For a sixth power free integer $a \neq 0$, set $E_a$ to be the elliptic curve \[E_a:y^2=x^3+a.\] Then for a cubefree integer $m$, we have the cubic twists $E_{m^2 a}$ and $E_{m^4 a}$ of $E_a$.
Let $\op{Sel}_3(E_a/K)$ (resp. $\Sh(E_a/K)$) denote the $3$-Selmer group (resp. Tate--Shafarevich group) of $E_a$ over $K$. The Selmer group sits in a short exact sequence
\[0\rightarrow E_a(K)/3 E_a(K)\rightarrow \op{Sel}_3(E_a/K)\rightarrow \Sh(E_a/K)[3]\rightarrow 0.\] It is thus easy to see that $\op{Sel}_3(E_a/K)=0$ if and only if \[E_a(K)[3]=0, \op{rank}(E_a(K))=0\text{ and }\Sh(E_a/K)[3]=0.\]
The main focus of this article is to study the following natural question.

\begin{question}
    Fix a sixth power free integer $a$ such that $\op{Sel}_3(E_a/K)=0$. What can one say about the family of cubefree integers $m$ for which $\op{Sel}_3(E_{m^2 a}/K)=0$ and $\op{Sel}_3(E_{m^4 a}/K)=0$? 
\end{question}

We construct explicit infinite families of numbers $m$ for which the above Selmer groups vanish.

\subsection{Main results} We state the main results of this article. It is shown that there is an explicit positive density of primes $\mathcal{Q}_a$ defined by local congruence conditions, such that if $m$ is divisible only by primes $\ell\in \mathcal{Q}_a$, then, $\op{Sel}_3(E_{m^2 a}/K)=0$ and $\op{Sel}_3(E_{m^4 a}/K)=0$. In particular, \[\op{rank} E_{m^2 a} (K),\op{rank} E_{m^4 a}(K)=0\text{ and }\Sh(E_{m^2 a}/K)[3], \Sh(E_{m^4 a}/K)[3]=0.\] When we specialize to the families $E_{\ell^2 a}$ and $E_{\ell^4 a}$, where $\ell$ is a prime number, we find that our result holds for an explicit positive density of primes $\ell$. 

 \par For a prime number $\ell$, set $\F_\ell:=\Z/\ell \Z$. If $E_a$ has good reduction at $\ell$, we set $\widetilde{E}_a(\F_\ell)$ to denote the group of $\F_\ell$-points on the reduced curve of $E_a$ at $\ell$. More generally, if $m=\ell_1\dots \ell_k$, for a fixed value of $k$ and $\ell_i$ prime, then there is a positive density of such numbers for which our result holds.
\begin{lthm}\label{main thm of paper}
     Let $a$ be an integer and $E_a$ be the elliptic curve defined by $y^2=x^3+a$. Assume that the following conditions are satisfied:
    \begin{enumerate}
        \item $a$ is not of the form $n^2$ or $-3 n^2$ for $n\in \Z$,
        \item $\op{Sel}_3(E_a/K)=0$.
    \end{enumerate} Set $\mathcal{Q}_a$ to denote the set of primes $\ell\nmid a$ such that the following conditions are satisfied:
    \begin{enumerate}
        \item $\ell\equiv 1\pmod{18}$, 
        \item $\ell$ is a cube modulo $q$ for each prime $q$ that divides $a$. 
          \item $\widetilde{E}_a(\F_\ell)[3]=0$.
    \end{enumerate}
    Then, the set $\mathcal{Q}_a$ is infinite with natural density given by
    \[\mathfrak{d}(\mathcal{Q}_a)=\frac{1}{8 \times 3^{s+1}},\] where $s$ is the number of prime divisors $q$ of $a$ such that $q\equiv 1\pmod{3}$. Moreover, if $m$ is any cubefree integer divisible only by primes in $\mathcal{Q}_a$, then, 
    \[\op{Sel}_3(E_{m^2 a}/K)=0 \text{ and }\op{Sel}_3(E_{m^4 a}/K)=0 .\]
\end{lthm}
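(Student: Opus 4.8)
The plan is to separate the density count for $\mathcal{Q}_a$ from the Selmer vanishing, the latter being the substantive part. For the density, I would translate the three conditions on a prime $\ell\nmid a$ into Frobenius conditions and invoke Chebotarev (writing $G_F:=\op{Gal}(\overline{\Q}/F)$): condition (1) says $\op{Frob}_\ell$ is trivial in $\op{Gal}(\Q(\mu_{18})/\Q)$; condition (2), for each $q\mid a$ with $q\equiv 1\pmod 3$, says $\op{Frob}_\ell$ lies in the index-$3$ subgroup of cubes in $\op{Gal}(\Q(\mu_q)/\Q)\cong(\Z/q)^\times$, and is automatic for $q=3$ or $q\equiv 2\pmod 3$ since cubing is then a bijection of $\F_q^\times$; and condition (3), using that $E_a$ has a rational $3$-isogeny with kernel $\langle(0,\sqrt a)\rangle$ so that over primes $\ell\equiv 1\pmod 3$ both eigenvalues of $\op{Frob}_\ell$ on $E_a[3]$ equal the value at $\ell$ of the quadratic character of $\Q(\sqrt a)$, is equivalent to $a$ being a nonsquare modulo $\ell$. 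The hypotheses $a\neq n^2,-3n^2$ ensure $\Q(\sqrt a)$ is neither $\Q$ nor the unique quadratic subfield $\Q(\sqrt{-3})$ of $\Q(\mu_{18})$, which furnishes the needed independence, and the displayed density is obtained by multiplying the local contributions.

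For the Selmer statement, set $L:=K(\sqrt[3]{m})$, a cyclic cubic extension of $K$. The geometric input is that $E_{m^2 a}$ and $E_{m^4 a}$ become isomorphic over $L$ to $E_a$ via $(x,y)\mapsto(u^2x,u^3y)$ with $u=\sqrt[3]{m}$, respectively $u=(\sqrt[3]{m})^2$, so $\op{Sel}_3(E_{m^2a}/L)\cong\op{Sel}_3(E_{m^4a}/L)\cong\op{Sel}_3(E_a/L)$. I would then show $E_a(L)[3]=0$: the field $K(E_a[3])$ is unramified over $K$ outside the primes above $6a$, while, because each $\ell\mid m$ satisfies $\ell\equiv 1\pmod 9$, the extension $L/K$ is unramified outside the primes above $m$ (indeed split at the prime above $3$, since then $m$ is a cube in $K_{\mathfrak p_3}$); as $\gcd(6a,m)=1$ and $K$ has class number one, $K(E_a[3])\cap L=K$, whence $\bar\rho_{E_a,3}(G_L)=\bar\rho_{E_a,3}(G_K)$ and $E_a(L)[3]=E_a(K)[3]=0$ (using $\op{Sel}_3(E_a/K)=0$). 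Consequently $E_{m^2a}(L)[3]=E_{m^4a}(L)[3]=0$ as well, so the restriction maps $\op{Sel}_3(E_{m^2a}/K)\hookrightarrow\op{Sel}_3(E_{m^2a}/L)$ and $\op{Sel}_3(E_{m^4a}/K)\hookrightarrow\op{Sel}_3(E_{m^4a}/L)$ are injective, and it suffices to prove $\op{Sel}_3(E_a/L)=0$.

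For this I would prove $\op{Sel}_3(E_a/K)\cong\op{Sel}_3(E_a/L)^{\op{Gal}(L/K)}$ via restriction. Since $E_a(L)[3]=0$, inflation--restriction already yields $H^1(G_K,E_a[3])\cong H^1(G_L,E_a[3])^{\op{Gal}(L/K)}$, so one only has to match local conditions. Conditions (1)--(2) force $L/K$ to split completely at every prime of bad reduction of $E_a$ and at the prime above $3$ — the local cube-residue computations at $2$, at $3$, and at the $q\mid a$ with $q\equiv 2\pmod 3$ all come out trivially because $m$ is a one-unit there, respectively a cube in the residue field — so there the $K$- and $L$-Selmer conditions coincide. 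At a prime $\lambda\mid\ell$ with $\ell\mid m$, condition (3) gives $E_a(K_\lambda)[3]=\widetilde E_a(\F_\ell)[3]=0$, hence $H^0(K_\lambda,E_a[3])=0$ and, by local Tate duality together with the Weil self-duality of $E_a[3]$, also $H^2(K_\lambda,E_a[3])=0$; the local Euler characteristic formula (with $\ell\neq 3$) then forces $H^1(K_\lambda,E_a[3])=0$, and similarly $H^1(L_w,E_a[3])=0$, so both local conditions are the zero group there. At every remaining prime $L/K$ is unramified and $E_a$ has good reduction away from $3$, so the local condition is the unramified subgroup, which is detected under restriction to the unramified extension $L_w/K_v$. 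Combining these, a class in $\op{Sel}_3(E_a/L)^{\op{Gal}(L/K)}$ pulls back into $\op{Sel}_3(E_a/K)$, which gives the isomorphism; hence $\op{Sel}_3(E_a/L)^{\op{Gal}(L/K)}=\op{Sel}_3(E_a/K)=0$. Since $\op{Sel}_3(E_a/L)$ is a finite $\F_3[\op{Gal}(L/K)]$-module and $\sigma-1$ acts nilpotently on it in characteristic $3$, vanishing of the invariants forces $\op{Sel}_3(E_a/L)=0$; together with the injections of the previous paragraph this gives $\op{Sel}_3(E_{m^2a}/K)=\op{Sel}_3(E_{m^4a}/K)=0$.

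The main obstacle I anticipate is the bookkeeping in the third paragraph: verifying that conditions (1)--(3) are exactly calibrated so as to (i) annihilate $H^1(K_v,E_a[3])$ at the primes ramifying in $L/K$ and (ii) force complete splitting of $L/K$ at the bad primes of $E_a$ and above $3$, so that the Selmer comparison is a genuine isomorphism rather than an inclusion with an uncontrolled cokernel. Everything downstream — the $\F_3[\Z/3]$-module argument and the transfer back to the two twists — is then formal.
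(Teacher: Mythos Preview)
Your proposal is correct and follows essentially the same architecture as the paper: reduce to showing $\op{Sel}_3(E_a/L)=0$ for $L=K(\sqrt[3]{m})$, prove $\op{Sel}_3(E_a/L)^{\op{Gal}(L/K)}\cong\op{Sel}_3(E_a/K)=0$ by matching local conditions under inflation--restriction, and conclude via the nilpotence of $\sigma-1$ on an $\F_3[\Z/3\Z]$-module. The density is likewise computed by Chebotarev in both treatments.

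There are two genuine differences worth recording. First, at the primes $\lambda\mid\ell$ with $\ell\mid m$, the paper works on the quotient side of the Kummer sequence and invokes the Mazur--Rubin machinery of twisted Selmer structures (Weil restriction, the abelian variety $A=\ker N_{L/K}$, etc.) to prove $\ker\gamma_\lambda=0$. Your argument---$H^0(K_\lambda,E_a[3])=0$, Weil self-duality plus Tate duality gives $H^2=0$, and local Euler characteristic with $\ell\neq 3$ forces $H^1(K_\lambda,E_a[3])=0$---is shorter and entirely self-contained; it exploits the specific feature that $\widetilde E_a(\F_\ell)[3]=0$ kills the whole local $H^1$, not merely the obstruction to descent. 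Second, for the density you interpret condition~(3) concretely as ``$a$ is a nonsquare modulo $\ell$'' (correct, since for $\ell\equiv 1\pmod 3$ both diagonal characters of $\rho_{E_a,3}$ restrict to the quadratic character of $\Q(\sqrt a)$), whereas the paper argues abstractly with the image of $\rho_{E_a,3}$ inside the Borel. Conversely, the paper obtains $E_a(L)[3]=0$ in one line from $\bigl(E_a(L)[3]\bigr)^G=E_a(K)[3]=0$ and the $p$-group fixed-point lemma; your ramification-disjointness argument reaches the same conclusion but is more work than necessary.

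One cautionary remark: if you actually multiply out your ``local contributions'' you will obtain $\tfrac{1}{6}\cdot\tfrac{1}{3^s}\cdot\tfrac{1}{2}=\tfrac{1}{4\cdot 3^{s+1}}$, not the displayed $\tfrac{1}{8\cdot 3^{s+1}}$. This is not a defect in your method; the paper's own Chebotarev bookkeeping slips by a factor of $2$ in passing from $[F\cdot\mathbb{L}:K]$ to $\#H\cdot[\mathbb{L}:\Q]$ (it should be $\#H\cdot[\mathbb{L}:K]$), and indeed the $15$ primes listed in $\mathcal{Q}_{-1}\cap[1,1000]$ are consistent with density $1/12$, not $1/24$.
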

For integers $a\in [-20, 20]$, it is shown that $\op{Sel}_3(E_a/K)=0$ for \[a=-17, -16,-14, -10, -9, -8, -6, -5, -1, 6,7,8, 13, 14, 20,\] and thus Theorem \ref{main thm of paper} applies to these values of $a$. The values of $\mathfrak{d}(\mathcal{Q}_a)$ are given in Table \ref{only table}. One of the main features of our result is that the set of primes $\mathcal{Q}_a$ can be made explicit. For instance, when $a=-1$, we have that 
\[\mathcal{Q}_{-1}\cap [1, 1000]=\{19,127, 163, 199, 271, 307, 379, 487, 523, 631, 739, 811, 883, 919, 991\}.\] For further details on numerical computations, we refer to sections \ref{illustrative example section} and \ref{numerical data section}.
\subsection{Methodology}
\par The study of Selmer ranks in twist families is related to the study the rank growth and stability of elliptic curves defined over a number field in a $\Z/p\Z$-extension for an odd prime $p$. Let $K$ be a number field containing the $p$-th roots of unity and $E$ be an elliptic curve over $K$ such that the $p$-Selmer group $\op{Sel}_p(E/K)=0$. Let $L/K$ be a cyclic $p$-extension. Then, it is shown that under certain conditions on $E$ and $L$, the Selmer group $\op{Sel}_p(E/L)=0$ as well. This strategy has been adapted from the technique of Mazur--Rubin \cite{MazurRubinAnnals, MazurRubinSilverberg}, Brau \cite{brau2014selmer}. In keeping with these works, we study stability questions in prime cyclic extensions for any prime number $p\geq 3$, cf. Theorem \ref{main theorem of section 3} and Theorem \ref{kummer extensions theorem} for further details.
\subsection{Related work} In recent years, there has been an increasing interest in rank distribution questions for families of elliptic curves from the perspective of \emph{arithmetic statistics}. These techniques involve the parameterization of Selmer groups of elliptic curves and abelian varieties by integral forms in many variables. The distribution properties of such integral forms can be studied via techniques from analytic number theory and the geometry of numbers, and has gained momentum in works of Bhargava--Shankar \cite{BS1,BS2,BS3}, Bhargava--Elkies--Shnidman \cite{bes}, Alp\"oge--Bhargava--Shnidman \cite{alpoge2022integers}, Shnidman--Weiss \cite{shnidman2021ranks, ShnidmanWeiss} etc. Our techniques on the other hand rely on local and global arguments in Galois cohomology (in keeping with those used by Mazur--Rubin, Brau), and one of the consequences is that we obtain very explicit descriptions for the twist families for which the Selmer group vanishes. Related questions have been studied by Satg\'e \cite{satge}, Iskra \cite{Iskra}, Goto \cite{Goto} and Bandini \cite{Bandini}. It is conceivable that our results should generalize to the context of abelian varieties with complex multiplication. For instance, Jacobians of twist families of bicyclic trigonal curves $y^3=f(x^2)$, cf. \cite{shnidman2021ranks} for details. 

\subsection{Organization} Including the introduction, the article consists of four sections. In section \ref{notation sec}, we set basic notation and recall definitions of Selmer groups and their properties. In section \ref{sec 3}, we study the stability of $p$-Selmer groups in $p$-cyclic extensions of number fields, leveraging the techniques of Mazur--Rubin, Brau and \v{C}esnavi\v{c}ius. We prove our main results for cubic twist families in section \ref{sec 4}. These results are backed up by explicit numerical computations in subsections \ref{illustrative example section} and \ref{numerical data section}.

\subsection{Acknowledgement}
\par Thanks to Antonio Lei for answering some of our questions, and for helpful comments. We would like to thank the referee for the careful review of the manuscript.

\subsection*{Data availability} No data was analyzed in proving the results in the article.
\subsection*{Conflict of Interest} There is no conflict of interest that the authors wish to report.
\section{Notation and preliminaries}\label{notation sec}
\par In this section, we introduce certain preliminary notions and set up relevant notation. Given an abelian group $M$, set $\# M$ to denote its size. Set $M[n]$ to denote its $n$-torsion subgroup. Given a prime number $p$, set $M[p^\infty]:=\bigcup_k M[p^k]$. The $p$-adic Tate module associated to $M$ is the inverse limit
\[\op{T}_p(M):=\varprojlim_n M[p^n]\] with respect to natural multiplication by $p$ maps $M[p^{k+1}]\rightarrow M[p^k]$. 
\par Fix a number field $K$ and denote by $\Sigma_K$ the set of all nonarchimedian primes $v$ of $K$. Given a prime $v$, we shall let $\ell$ be the rational prime such that $v|\ell$. Set $\F_\ell$ to denote the finite field with $\ell$ elements. For $v\in \Sigma_K$, set $K_v$ to denote the completion of $K$ at $v$. Take $\cO_{K_v}$ to be the valuation ring of $K_v$ and $\pi_v$ a choice of uniformizer of $\cO_{K_v}$. Let $k_v$ be the residue field $\cO_{K_v}/\pi_v$. Denote by $i_v: K\hookrightarrow K_v$ the natural inclusion of $K$ into $K_v$. Let $\bar{K}$ (resp. $\bar{K}_v$) be an algebraic closure of $K$ (resp. $K_v$) and $\op{G}_K$ (resp. $\op{G}_{K_v}$) denote the absolute Galois group of $K$ (resp. $K_v$). Let $\op{I}_v$ denote the inertia subgroup of $\op{G}_{K_v}$. At each prime $v\in \Sigma_K$, choose an embedding $\iota_v: \bar{K}\hookrightarrow \bar{K}_v$ that extends $i_v$. Note that $\iota_v$ induces an inclusion $\iota_v^*: \op{G}_{K_v}\hookrightarrow\op{G}_K$. Let $M$ be a $\op{G}_K$-module and $i\in \Z_{\geq 0}$. Take $H^i(K, M)$ (resp. $H^i(K_v, M)$) to denote the cohomology group $H^i(\op{G}_K, M)$ (resp. $H^i(\op{G}_{K_v}, M)$). The restriction map induced by $\iota_v^*$ is denoted 
\[\op{res}_v: H^i(K, M)\rightarrow H^i(K_v, M).\]
\par Let $E$ be an elliptic curve over $K$ and $n$ be a positive integer. Given a prime $v\in \Sigma_K$ at which $E$ has good reduction, set $\widetilde{E}$ to denote the reduction of $E$ at $v$. Take $\widetilde{E}(k_v)$ to denote the group of $k_v$-points on $\widetilde{E}$. Set $E[n]$ to denote the $n$-torsion subgroup of $E(\bar{K})$. For $\cF\in \{K, K_v\}$, we have the exact sequence of $\op{G}_{\cF}$-modules
\[0\rightarrow E[n]\rightarrow E\xrightarrow{\times n} E\rightarrow 0,\] from which we arrive at the following Kummer sequence
\begin{equation}\label{local kummer map}0\rightarrow E(\cF)/n E(\cF)\xrightarrow{\delta_{E, \cF}} H^1(\cF, E[n])\rightarrow H^1(\cF, E)[n]\rightarrow 0.\end{equation}
\begin{definition}\label{def of n Selmer group}Let $n$ be an odd natural number. With respect to notation above, the $n$-Selmer group of $E$ over $K$ is defined as follows
\[\op{Sel}_n(E/K):=\op{ker}\left\{H^1(K, E[n])\xrightarrow{\Phi} \prod_{v\in \Sigma_K} H^1(K_v, E)[n] \right\}.\] Here the map $\Phi$ is the product of maps $\Phi_v$, given by the composite
\[H^1(K, E[n])\xrightarrow{\op{res}_v} H^1(K_v, E[n])\rightarrow H^1(K_v, E)[n].\]
\end{definition}
Let $\Sigma$ be a finite set of primes that contains the primes of $K$ that lie above the primes that divide $n$ and the primes at which $E$ has bad reduction. Let $K_\Sigma$ be the maximal algebraic extension of $K$ in which the primes $\ell\notin \Sigma$ are unramified. Set $\op{G}_{K, \Sigma}$ to denote the Galois group $\op{Gal}(K_\Sigma/K)$. Given a $\op{G}_{K, \Sigma}$-module $M$, set $H^i(K_\Sigma/K, M)$ to denote $H^i(\op{G}_{K, \Sigma}, M)$. The Selmer group $\op{Sel}_n(E/K)$ can also be described by finitely many local conditions
\[\op{Sel}_n(E/K)=\op{ker}\left\{ H^1(K_\Sigma/K, E[n])\xrightarrow{\Phi_\Sigma} \bigoplus_{v\in \Sigma} H^1(K_v, E)[n]\right\},\]where the map $\Phi_\Sigma:=\bigoplus_{v\in \Sigma} \Phi_v$, cf. \cite[Chapter 1]{CoatesSujatha}. Given a prime $p$, the natural map $E[p^k]\hookrightarrow E[p^{k+1}]$ induces a map at the level of $p$-primary Selmer groups
\[\op{Sel}_{p^n}(E/K)\rightarrow \op{Sel}_{p^{n+1}}(E/K).\]
The $p$-primary Selmer group is then defined to be the direct limit
\[\op{Sel}_{p^\infty}(E/K):=\varinjlim_n \op{Sel}_{p^n} (E/K).\]
\par Let $\Sh(E/K)$ denote the Tate--Shafarevich group of $E$ over $K$. The Selmer group fits into a short exact sequence 
\begin{equation}\label{n Selmer exact sequence}0\rightarrow E(K)/n E(K)\xrightarrow{\delta_{E,n}} \op{Sel}_n(E/K)\rightarrow \Sh(E/K)[n]\rightarrow 0. \end{equation}
Given a prime $p$, passing to the direct limit for integers $n$ of the form $p^k$, we have the following short exact sequence
\begin{equation}\label{basic Selmer exact sequence}0\rightarrow E(K)\otimes_{\Z} \Q_p/\Z_p\rightarrow \op{Sel}_{p^\infty}(E/K)\rightarrow \Sh(E/K)[p^\infty]\rightarrow 0. \end{equation}
\par As is well known, the Tate--Shafarevich group $\Sh(E/K)$ is conjectured to be finite. If $\Sh(E/K)[p^\infty]$ is known to be finite, then from \eqref{basic Selmer exact sequence}, 
\[\op{corank}_{\Z_p} \left(\op{Sel}_{p^\infty}(E/K)\right)=\op{rank} E(K). \] 
\par For each integer $k\geq 1$, multiplication by $p$ induces a map of Galois modules $E[p^{n+1}]\xrightarrow{\times p} E[p^n]$. The \emph{compact Selmer group} is defined to be the inverse limit
\[\mathfrak{S}_p(E/K):=\varprojlim_n 
 \op{Sel}_{p^n} (E/K)\] with respect to these maps. From \eqref{n Selmer exact sequence}, we obtain the following exact sequence
 \begin{equation}\label{compact selmer exact sequence}
     0\rightarrow E(K)\otimes_{\Z} \Z_p\rightarrow \mathfrak{S}_p(E/K)\rightarrow \op{T}_p\left(\Sh(E/K)\right)\rightarrow 0.
 \end{equation}
If the $p$-primary Tate--Shafarevich group $\Sh(E/K)[p^\infty]$ is finite, then its Tate--module vanishes. Thus, if $\Sh(E/K)[p^\infty]$ is finite, then, 
\[\op{rank}_{\Z_p}\left(\mathfrak{S}_p(E/K)\right)=\op{rank} E(K).\]

It follows from \cite[Lemma 1.8]{CoatesSujatha} that there is a short exact sequence
\begin{equation}\label{comparison of compact and p-primary selmer groups}0\rightarrow E(K)[p^\infty]\rightarrow \mathfrak{S}_p(E/K)\rightarrow \op{T}_p\left(\op{Sel}_{p^\infty}(E/K)\right)\rightarrow 0,\end{equation} and thus, 
\begin{equation}\label{corank equal rank}\op{corank}_{\Z_p} \left(\op{Sel}_{p^\infty}(E/K)\right)=\op{rank}_{\Z_p}\left( \mathfrak{S}_p(E/K)\right).\end{equation} 

\par Note if $\op{Sel}_p(E/K)=0$, then, $\Sh(E/K)[3^\infty]=0$, $\op{rank} E(K)=0$ and $E(K)[3^\infty]=0$. Thus it follows from \eqref{comparison of compact and p-primary selmer groups} that $\mathfrak{S}_p(E/K)=0$. It then follows from the Cassels--Poitou--Tate exact sequence \cite[Ch. 1 (3)]{CoatesSujatha} that the map $\Phi_\Sigma$ is surjective, from which we obtain a short exact sequence
\begin{equation}\label{CPT equation}
   0\rightarrow  \op{Sel}_n(E/K)\rightarrow H^1(K_\Sigma/K, E[n])\xrightarrow{\Phi_\Sigma} \bigoplus_{v\in \Sigma} H^1(K_v, E)[n]\rightarrow 0.
\end{equation}

\section{Rank stability in prime cyclic extensions}\label{sec 3}
\par Let $E$ be an elliptic curve over a number field $K$ and $p$ be an odd prime number. Denote by $\Sigma_p$ the set of primes $v$ of $K$ that lie above $p$. In this section, we assume that $\op{Sel}_p(E/K)=0$. Note that this is equivalent to requiring that $E(K)$ has rank $0$, and both $E(K)$ and $\Sh(E/K)$ have no nontrivial $p$-torsion.
\subsection{Local contributions to the growth of Selmer groups}
\par We take $L/K$ to be a finite Galois extension with Galois group $G=\op{Gal}(L/K)$ isomorphic to $\Z/p\Z$. Let $\sigma$ be a generator of $G$. Set $S$ to be the set of primes $v\in \Sigma_K$ such that any one of the following conditions are satisfied:
\begin{itemize}
    \item $v\in \Sigma_p$, 
    \item $E$ has bad reduction at $v$, 
    \item $v$ is ramified in $L$.
\end{itemize}
Denote by $S(L)$ the primes $w\in \Sigma_L$ such that $w|v$ for some prime $v\in S$. Then there are exact sequences
\[\begin{split} & 0\rightarrow \op{Sel}_p(E/K)\rightarrow H^1(K_S/K, E[p])\xrightarrow{\Phi_K} \bigoplus_{v\in S} H^1(K_v, E)[p]\rightarrow 0,  \\ 
 & 0\rightarrow \op{Sel}_p(E/L)^G\rightarrow H^1(K_S/L, E[p])^G\xrightarrow{\Phi_L} \left(\bigoplus_{w\in S(L)} H^1(L_w, E)[p]\right)^G.  \\ \end{split}\]
From the Cassels-Poitou-Tate sequence (cf. \eqref{CPT equation} and the discussion preceding it), the map $\Phi_K$ is surjective. For $v\in S$, consider the restriction map
\[\gamma_v: H^1(K_v, E)[p]\rightarrow \left(\bigoplus_{w|v} H^1(L_w, E)[p]\right)^G,\] where $w$ runs through all primes of $L$ that lie above $v$. Let 
\[\gamma: \bigoplus_{v\in S} H^1(K_v, E)[p]\rightarrow \left(\bigoplus_{w\in S(L)} H^1(L_w, E)[p]\right)^G\]
denote the direct sum of maps $\bigoplus_{v\in S} \gamma_v$.
\par There is a natural restriction map
\[\alpha: \op{Sel}_p(E/K)\rightarrow \op{Sel}_p(E/L)^G\] which fits into the fundamental diagram
 \begin{equation}\label{fdiagram}
\begin{tikzcd}[column sep = small, row sep = large]
& 0=\op{Sel}_p(E/K) \arrow{r}\arrow{d}{\alpha} & H^1(K_S/K, E[p])\arrow{r}{\Phi_{K}} \arrow{d}{\beta} & \bigoplus_{v\in S} H^1(K_v, E)[p] \arrow{d}{\gamma}  \arrow{r} & 0\\
0\arrow{r} & \op{Sel}_p(E/L)^G\arrow{r} & H^1(L_S/L, E[p])^G\arrow{r}  &\left(\bigoplus_{w\in S(L)} H^1(L_w, E)[p]\right)^G,
\end{tikzcd}
\end{equation}

From the snake lemma one obtains an exact sequence 
\begin{equation}\label{snake lemma exact sequence}
    0=\op{ker}(\alpha)\rightarrow \op{ker}(\beta)\rightarrow \op{ker}(\gamma)\rightarrow \op{cok}(\alpha)\rightarrow \op{cok}(\beta)\rightarrow \op{cok}(\gamma)
\end{equation}
The following basic result will be useful in many of our arguments.
\begin{lemma}\label{lemma V^G=0 implies V=0}
    Let $V$ be a finite dimensional $\F_p$-vector space and $G$ be a finite $p$-group which acts on $V$. Then, the following implication holds
    \[V^G=0\Rightarrow V=0.\]
\end{lemma}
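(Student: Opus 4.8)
The plan is to prove the contrapositive: assuming $V\neq 0$, I will produce a nonzero $G$-fixed vector. The first step is a reduction to the case $G\cong\Z/p\Z$. Since $G$ is a nontrivial finite $p$-group, its center is nontrivial, hence contains an element of order $p$; let $Z\cong\Z/p\Z$ be the subgroup it generates. As $Z$ is normal in $G$, the subspace $V^Z$ is $G$-stable, it is again a finite-dimensional $\F_p$-vector space, $Z$ acts trivially on it, and $V^G=(V^Z)^{G/Z}$. Thus if I can show $V^Z\neq 0$, then induction on $|G|$ applied to the $G/Z$-action on $V^Z$ yields $V^G\neq 0$; the base case $|G|=p$ is precisely the cyclic case treated next.

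For $G=\langle\sigma\rangle\cong\Z/p\Z$, I would consider the endomorphism $N:=\sigma-\mathrm{id}$ of $V$. Working over $\F_p$, one has
\[(\sigma-\mathrm{id})^p=\sigma^p-\mathrm{id}=0,\]
because $\binom{p}{k}\equiv 0\pmod p$ for $0<k<p$ and $\sigma^p=\mathrm{id}$. Hence $N$ is nilpotent, and a nilpotent endomorphism of a nonzero finite-dimensional vector space has nonzero kernel. Since $\ker N=\{v\in V:\sigma v=v\}=V^\sigma=V^G$, we conclude $V^G\neq 0$, which closes the induction and proves the lemma.

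I do not anticipate any serious obstacle here; this is a standard fact. The only points requiring a little care are in the reduction step: one must note that $V^Z$ is genuinely $G$-stable (which uses normality of $Z$) and that the induced $G/Z$-action is well defined (which uses that $Z$ acts trivially on $V^Z$). An alternative, equally short argument that avoids the induction altogether is orbit counting: $G$ acts on the finite set $V$ with $|V|=p^{\dim_{\F_p}V}$ elements, every orbit has $p$-power size, and the fixed points are exactly the singleton orbits, so $\#V^G\equiv |V|\equiv 0\pmod p$ whenever $V\neq 0$; since $0\in V^G$ always, this forces $\#V^G\geq p$. I would use whichever of the two formulations reads more cleanly in context.
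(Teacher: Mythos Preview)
Your proposal is correct. Your primary argument---inducting on $|G|$ via a central order-$p$ subgroup and then, in the cyclic base case, observing that $(\sigma-\mathrm{id})^p=0$ over $\F_p$ so that $\sigma-\mathrm{id}$ is nilpotent with nonzero kernel---is a different route from the paper's. The paper uses precisely the orbit-counting argument you offer as an alternative: assuming $V^G=0$, the orbit $\{0\}$ is the unique singleton, every other orbit has size divisible by $p$, hence $\#V\equiv 1\pmod p$ and $V=0$. Your nilpotency approach is slightly more algebraic and extends verbatim to arbitrary (possibly infinite-dimensional) $\F_p[G]$-modules, whereas the orbit count is shorter, avoids the induction, and applies more broadly to any finite $p$-group acting on a finite set of $p$-power cardinality with a distinguished fixed point. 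For the paper's purposes either suffices, and the orbit count is the more economical choice in context.
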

\begin{proof}
    We have that 
    \[\# V=\sum_i \# \mathcal{O}_i,\] where the sum ranges over $G$-orbits of $V$. Suppose that $V^G=0$, then $\{0\}$ is a $G$-orbit of $V$ which contains only $1$-element. Every other $G$-orbit has $p^{n_i}$ elements with $n_i>0$. This implies that $\# V \equiv 1\mod{p}$, hence, $V=0$. 
\end{proof}
\begin{proposition}\label{SelpL=0 propn}
    Let $K$ be a number field and $E$ be an elliptic curve over $K$ such that $\op{Sel}_p(E/K)=0$. Let $L/K$ be a cyclic $p$-extension with Galois group $G:=\op{Gal}(L/K)$. Assume moreover that for all $v\in S$, the map $\gamma_v$ is injective. Then, we have that $\op{Sel}_p(E/L)=0$. In particular, the rank of $E(L)$ is $0$. 
\end{proposition}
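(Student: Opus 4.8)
The plan is to run a diagram chase on the fundamental diagram \eqref{fdiagram}: I will first conclude that $\op{Sel}_p(E/L)^G = 0$, and then upgrade this to $\op{Sel}_p(E/L) = 0$ using Lemma \ref{lemma V^G=0 implies V=0}. Note that every group appearing in \eqref{fdiagram} is a finite-dimensional $\F_p$-vector space, so that Lemma \ref{lemma V^G=0 implies V=0} is available.

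The first step is to pin down the middle vertical map $\beta\colon H^1(K_S/K, E[p]) \to H^1(K_S/L, E[p])^G$. Since $\op{Sel}_p(E/K) = 0$, we have $E(K)[p] = 0$, as recalled at the beginning of the section. The group $G$ acts on the finite $\F_p$-vector space $E(L)[p]$ with fixed points $E(L)[p]^G = E(K)[p] = 0$, so Lemma \ref{lemma V^G=0 implies V=0} forces $E(L)[p] = 0$. Because $S$ contains the primes of bad reduction of $E$, the primes above $p$, and the primes ramified in $L/K$, we have $L(E[p]) \subseteq K_S$, and hence $E[p]^{\op{Gal}(K_S/L)} = E(L)[p] = 0$. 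Plugging this into the inflation--restriction exact sequence for $1 \to \op{Gal}(K_S/L) \to \op{Gal}(K_S/K) \to G \to 1$,
\[0 \to H^1\!\big(G, E[p]^{\op{Gal}(K_S/L)}\big) \to H^1(K_S/K, E[p]) \xrightarrow{\ \beta\ } H^1(K_S/L, E[p])^G \to H^2\!\big(G, E[p]^{\op{Gal}(K_S/L)}\big),\]
both outer terms vanish, so $\beta$ is an isomorphism; in particular $\op{cok}(\beta) = 0$.

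Next I would feed this into the snake-lemma sequence \eqref{snake lemma exact sequence}. The hypothesis that each $\gamma_v$ is injective says precisely that $\gamma = \bigoplus_{v \in S}\gamma_v$ is injective, i.e. $\op{ker}(\gamma) = 0$. Combining $\op{ker}(\alpha) = 0$ (which holds since $\op{Sel}_p(E/K) = 0$), $\op{ker}(\gamma) = 0$ and $\op{cok}(\beta) = 0$, the segment $\op{ker}(\gamma) \to \op{cok}(\alpha) \to \op{cok}(\beta)$ of \eqref{snake lemma exact sequence} forces $\op{cok}(\alpha) = 0$. Since the source $\op{Sel}_p(E/K)$ of $\alpha$ is zero, $\op{cok}(\alpha) = \op{Sel}_p(E/L)^G$, so $\op{Sel}_p(E/L)^G = 0$. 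As $\op{Sel}_p(E/L)$ is a finite-dimensional $\F_p$-vector space carrying an action of the $p$-group $G$, Lemma \ref{lemma V^G=0 implies V=0} gives $\op{Sel}_p(E/L) = 0$. Finally, the inclusion $E(L)/pE(L) \hookrightarrow \op{Sel}_p(E/L)$ from \eqref{n Selmer exact sequence} yields $E(L)/pE(L) = 0$, and since $E(L)$ is finitely generated, this gives $\op{rank} E(L) = 0$.

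The one genuinely non-formal input is the vanishing of $\op{cok}(\beta)$, which reduces to the observation $E(L)[p] = 0$ --- itself just Lemma \ref{lemma V^G=0 implies V=0} applied to $V = E(L)[p]$ --- so I do not anticipate a real obstacle; the rest is bookkeeping with the snake lemma and inflation--restriction. The points deserving care are the identifications $E(L)[p]^G = E(K)[p]$ and $E[p]^{\op{Gal}(K_S/L)} = E(L)[p]$ (equivalently $L(E[p]) \subseteq K_S$), and checking that the finiteness hypothesis of Lemma \ref{lemma V^G=0 implies V=0} is satisfied for both $V = E(L)[p]$ and $V = \op{Sel}_p(E/L)$.
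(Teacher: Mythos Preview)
Your proof is correct and follows essentially the same route as the paper: both arguments use $E(L)[p]=0$ (via Lemma~\ref{lemma V^G=0 implies V=0}) to show $\beta$ is an isomorphism through inflation--restriction, then the snake lemma on \eqref{fdiagram} together with the injectivity of each $\gamma_v$ to obtain $\op{Sel}_p(E/L)^G=0$, and finally Lemma~\ref{lemma V^G=0 implies V=0} once more. The only differences are cosmetic---you spell out why $E[p]^{\op{Gal}(K_S/L)}=E(L)[p]$ and the finiteness hypotheses, and supply the one-line deduction of $\op{rank}E(L)=0$, while the paper leaves these implicit.
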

\begin{proof}
    Since it is assumed that $\op{Sel}_p(E/K)=0$, we find that $E(K)[p]=0$. Note that $E(K)[p]=E(L)[p]^G$. We deduce from Lemma \ref{lemma V^G=0 implies V=0} that $E(L)[p]=0$. The restriction map $\beta$ in \eqref{fdiagram} sits in an inflation-restriction sequence
    \[0\rightarrow H^1(L/K, E(L)[p])\rightarrow H^1(K_S/K, E[p])\xrightarrow{\beta} H^1(K_S/L, E[p])^G\rightarrow H^2(L/K, E(L)[p]).\] The groups $H^i(L/K, E(L)[p])=0$ and hence, $\beta$ is an isomorphism. The exact sequence \eqref{snake lemma exact sequence} then implies that \[\op{cok}(\alpha)\simeq \op{ker}(\gamma)\simeq \bigoplus_{v\in S} \op{ker}\gamma_v.\]
    Since it is assumed that $\gamma_v$ is injective for all $v\in S$, we deduce that $\op{cok}(\alpha)=0$. Since $\op{Sel}_p(E/K)=0$, it follows that $\op{cok}(\alpha)=\op{Sel}_p(E/L)^G=0$. It then follows from Lemma \ref{lemma V^G=0 implies V=0} that $\op{Sel}_p(E/L)=0$, and this concludes the proof of the result. 
\end{proof}

\par We analyze conditions for $\gamma_v$ to be injective for $v\in S$. Let $w|v$ be a chosen prime of $L$ and consider the restriction map 
\[\gamma_v': H^1(K_v, E)[p]\rightarrow H^1(L_w, E)[p]^{\op{Gal}(L_w/K_v)}.\] Since $G$ permutes the primes $w$ of $L$ that lie above $v$ transitively, it follows that 
\begin{equation}\label{kernels are the same}\op{ker}\gamma_v=\op{ker}\gamma_v'.\end{equation}
\begin{lemma}\label{gamma_v injective when v splits}
    Let $v\in S$ and $w|v$ be a prime of $L$. Then, we have that 
    \[\op{ker}\gamma_v\simeq H^1(L_w/K_v, E(L_w))[p].\]
In particular, if $v$ splits in $L$, then, $\gamma_v$ is injective.
\end{lemma}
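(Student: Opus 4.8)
The plan is to reduce $\ker\gamma_v$ to the kernel of a single restriction map and then read it off from inflation--restriction. By \eqref{kernels are the same} it suffices to compute $\ker\gamma_v'$, where $\gamma_v'\colon H^1(K_v, E)[p]\to H^1(L_w, E)[p]^{D_w}$ is restriction for a fixed $w\mid v$ and $D_w:=\op{Gal}(L_w/K_v)$. Since $L/K$ is a cyclic $p$-extension, $D_w$ is trivial when $v$ splits in $L$ and cyclic of order $p$ otherwise.

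First I would apply the inflation--restriction exact sequence to the $\op{G}_{K_v}$-module $E=E(\bar{K}_v)$ relative to the normal subgroup $\op{G}_{L_w}\trianglelefteq\op{G}_{K_v}$, whose quotient is $D_w$ and whose invariants are $E(L_w)$:
\[
0\to H^1(D_w, E(L_w))\xrightarrow{\op{inf}} H^1(K_v, E)\xrightarrow{\op{res}} H^1(L_w, E)^{D_w}.
\]
This identifies $\ker\big(\op{res}\colon H^1(K_v, E)\to H^1(L_w, E)\big)$ with $H^1(L_w/K_v, E(L_w))$ via inflation. Next I would intersect with $p$-torsion: a class $x\in H^1(K_v,E)[p]$ is killed by $\gamma_v'$ exactly when $\op{res}(x)=0$ in $H^1(L_w,E)$, i.e. exactly when $x$ lies in the image of inflation; since inflation is injective and $x$ is $p$-torsion, $x$ must come from $H^1(L_w/K_v, E(L_w))[p]$, and conversely every such class inflates to a $p$-torsion class restricting to $0$. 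Hence $\ker\gamma_v\simeq H^1(L_w/K_v, E(L_w))[p]$, which is the first assertion (when $D_w$ has order $p$ the group $H^1(D_w, E(L_w))$ is already annihilated by $p$, so the $[p]$ is then only a formality).

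For the ``in particular'': if $v$ splits in $L$ then $L_w=K_v$, so $D_w$ is trivial, $H^1(D_w, E(L_w))=0$, and therefore $\ker\gamma_v=0$, i.e. $\gamma_v$ is injective. I do not expect a substantive obstacle here; the only mildly delicate point is tracking the $p$-torsion through the inflation map and recalling that restriction lands in the $D_w$-invariants (which is built into the definition of $\gamma_v'$), after which the lemma is a direct invocation of inflation--restriction once $\ker\gamma_v$ is matched with $\ker\gamma_v'$ via \eqref{kernels are the same}.
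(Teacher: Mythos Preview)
Your proof is correct and takes essentially the same approach as the paper: reduce to $\ker\gamma_v'$ via \eqref{kernels are the same}, then read off $\ker\gamma_v'\simeq H^1(L_w/K_v,E(L_w))[p]$ from the inflation--restriction sequence for $E$ along $\op{G}_{L_w}\trianglelefteq\op{G}_{K_v}$. The paper's proof is simply a terser statement of the same argument, without spelling out the $p$-torsion bookkeeping or the split case.
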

\begin{proof}
    From the inflation-restriction sequence, we find that 
      \[\op{ker}\gamma_v'\simeq H^1(L_w/K_v, E(L_w))[p].\]
      Since $\op{ker}\gamma_v\simeq \op{ker}\gamma_v'$, the result follows.
\end{proof}

Since $\gamma_v$ is injective when $v$ splits in $L$, we consider two cases
\begin{enumerate}
    \item $v\nmid p$ and $v$ is inert or ramified in $L$, 
    \item $v| p$ and $v$ is inert or ramified in $L$.
\end{enumerate}
We note that in the above cases, there is a unique prime $w|v$ and hence, $\gamma_v'=\gamma_v$. 
\begin{lemma}\label{gamma_v is injective for v does not divide p}
    For $v\in S\setminus \Sigma_p$ assume that \emph{either} of the following conditions is satisfied
    \begin{enumerate}
        \item $v$ is a prime of bad reduction and is split in $L$,
        \item $v$ is a prime of good reduction, $v$ is ramified in $L$ and $\widetilde{E}(k_v)[p]=0$. 
    \end{enumerate}
    Then, $\gamma_v$ is injective.
\end{lemma}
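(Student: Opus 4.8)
The plan is to treat the two hypotheses separately; the first is immediate and the second reduces to a short local cohomology computation. If condition (1) holds, so that $v$ is a prime of bad reduction that splits completely in $L$, then Lemma~\ref{gamma_v injective when v splits} already asserts that $\gamma_v$ is injective, and there is nothing more to do. Suppose instead that condition (2) holds. As noted just before the lemma, in this case there is a unique prime $w\mid v$ of $L$, so $\gamma_v=\gamma_v'$. Since $[L:K]=p$ is prime and $v$ ramifies in $L$, the local extension $L_w/K_v$ is totally ramified of degree $p$; as $v\nmid p$ it is in fact tamely ramified. Consequently $\op{Gal}(L_w/K_v)\cong\Z/p\Z$ and the residue extension is trivial, i.e. $k_w=k_v$. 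By Lemma~\ref{gamma_v injective when v splits} we have $\op{ker}\gamma_v\cong H^1(L_w/K_v,E(L_w))[p]$, so it suffices to show this group vanishes under the hypothesis $\widetilde{E}(k_v)[p]=0$.

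To compute $H^1(L_w/K_v,E(L_w))$ I would use the exact sequence of $\op{Gal}(L_w/K_v)$-modules coming from good reduction,
\[
0\rightarrow\widehat{E}(\mathfrak{m}_{L_w})\rightarrow E(L_w)\rightarrow\widetilde{E}(k_w)\rightarrow 0,
\]
where $\widehat{E}$ is the formal group of $E$ and $\mathfrak{m}_{L_w}$ the maximal ideal of $\cO_{L_w}$. Let $\ell$ be the rational prime below $v$. Then $\widehat{E}(\mathfrak{m}_{L_w})$ is a pro-$\ell$ group and $\ell\neq p$, so multiplication by $p$ is an automorphism of it; hence $\widehat{E}(\mathfrak{m}_{L_w})$ is uniquely $p$-divisible, and its cohomology in every positive degree with the finite $p$-group $\op{Gal}(L_w/K_v)$ is both $p$-divisible and annihilated by $p$, hence zero. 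The long exact cohomology sequence then gives $H^1(L_w/K_v,E(L_w))\cong H^1(L_w/K_v,\widetilde{E}(k_w))$. Since $L_w/K_v$ is totally ramified, $\op{Gal}(L_w/K_v)$ acts trivially on $\widetilde{E}(k_w)=\widetilde{E}(k_v)$, so
\[
H^1(L_w/K_v,\widetilde{E}(k_v))=\Hom(\Z/p\Z,\widetilde{E}(k_v))=\widetilde{E}(k_v)[p].
\]
As $H^1$ of a group of order $p$ is annihilated by $p$, passing to $p$-torsion changes nothing, and the hypothesis $\widetilde{E}(k_v)[p]=0$ forces $\op{ker}\gamma_v=0$, finishing case (2).

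This is standard Galois-cohomology bookkeeping, so I do not expect a genuine obstacle; the only points demanding care are the equivariance of the reduction map $E(L_w)\to\widetilde{E}(k_w)$ for the $\op{Gal}(L_w/K_v)$-action, and the observation that this action on the target is trivial precisely because the extension is totally ramified (which is what legitimizes the identification $H^1\cong\Hom\cong\widetilde{E}(k_v)[p]$ and the replacement of $k_w$ by $k_v$ in the hypothesis). One should also confirm that $v\nmid p$ combined with $v$ ramified in a degree-$p$ Galois extension indeed forces a single prime $w\mid v$ with $e(w/v)=p$ and $f(w/v)=1$, which is exactly the configuration recorded before the statement.
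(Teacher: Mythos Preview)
Your argument is correct. The paper itself does not give a proof but simply cites \cite[Proposition 2.8]{pathakray2024rank}, so there is no detailed argument in the paper to compare against; your self-contained computation via the formal-group filtration and the vanishing of $H^i(\op{Gal}(L_w/K_v),\widehat{E}(\mathfrak{m}_{L_w}))$ for the pro-$\ell$ group $\widehat{E}(\mathfrak{m}_{L_w})$ is exactly the standard route and is almost certainly what underlies the cited proposition.
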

We remark that a prime $v\in S\setminus \Sigma_p$ at which $E$ has good reduction is necessarily ramified in $L$.
\begin{proof}
    The result follows from \cite[Proposition 2.8]{pathakray2024rank}.
\end{proof}
\par Throughout the rest of this section, we shall study the cases when $v|p$. This analysis is based on unpublished work of Brau \cite{brau2014selmer}, results of Mazur--Rubin \cite{MazurRubinAnnals} and Mazur--Rubin--Silverberg \cite{MazurRubinSilverberg}. For a prime $v$ of a number field $F$ and a $\op{G}_{F_v}$-module $V$, set 
\[H^1_{\op{nr}}(F_v, V):=\op{image}\left\{\op{inf}: H^1(\op{G}_{F_v}/I_v, V^{\op{I}_v})\longrightarrow H^1(K_v, V)\right\}.\]

\begin{definition}\label{selmer structure def}
    Let $p$ be an odd prime and  $F$ be a number field. For a continuous $p$-primary $\op{G}_F$-module $V$, a \emph{Selmer structure} $\cF$ on $V$ is a collection of cohomology classes \[H^1_\cF(F_v, V)\subseteq H^1(F_v, V)\] for $v\in \Sigma_F$ such that for all but finitely many primes $v$, 
    \[H^1_{\cF}(F_v, V)=H^1_{\op{nr}}(F_v, V).\]
    Associated to $\cF$ is the Selmer group 
    \[H^1_{\cF}(F, V):=\op{ker}\left\{ H^1(F, V)\longrightarrow \prod_{v\in \Sigma_F} \left(\frac{H^1(F_v, V)}{H^1_{\cF}(F_v, V)}\right)\right\}.\]
\end{definition} 
\par Given two Selmer structures $\cF$ and $\cG$, set 
\[\begin{split}
    & H^1_{\cF+\cG}(F_v, V):=H^1_{\cF}(F_v, V)+H^1_{\cG}(F_v, V),\\
    & H^1_{\cF\cap \cG}(F_v, V):=H^1_{\cF}(F_v, V)\cap H^1_{\cG}(F_v, V).
\end{split}\]

We would like to draw attention to two Selmer conditions $\cF$ and $\mathcal{A}$ associated to the $\op{G}_K$-module $E[p]$. For each prime $v\in \Sigma_K$, set 
\[H^1_{\cF}(K_v, E[p]):=\op{image}\left\{ \frac{E(K_v)}{p E(K_v)}\xrightarrow{\delta_{E,K_v}} H^1(K_v, E[p])\right\},\] where $\delta_{E,K_v}$ is the local Kummer map \eqref{local kummer map}.
Observe that $\op{Sel}_p(E/K)=H^1_{\cF}(K, E[p])$. Let $L/K$ be a $G$-extension where $G\simeq \Z/p\Z$. 
Set $\op{Res}_K^L E$ to denote the Weil--restriction of scalars of $E$ from $L$ to $K$. For every $K$-algebra $X$, we have that 
\[(\op{Res}_K^L E)(X)=E(L\otimes_K X).\]Let $A$ denote the kernel of the norm map $\op{Res}_K^L E\xrightarrow{N_{L/K}} E$. Let $\cO$ be the ring of integers of $\Q(\mu_p)$ and $\p$ be the ideal of $\cO$ that lies above $p$. The natural inclusion $\Z[G]\hookrightarrow \op{End}_K(\op{Res}_K^L E)$ induces a natural map $\Z[G]\rightarrow \op{End}_K(A)$, which factors as
\[\Z[G]\twoheadrightarrow \cO\hookrightarrow \op{End}_K(A).\] There is a canonical $\op{G}_K$-isomorphism $A[\p]\xrightarrow{\sim} E[p]$, cf. \cite[Proposition 4.1]{MazurRubinAnnals}. Let $\pi$ be a generator of $\p$ and consider the composed map
\[\lambda_{A, v}: \frac{A(K_v)}{ \pi A(K_v)} \hookrightarrow H^1(K_v, A[\p])\xrightarrow{\sim} H^1(K_v, E[p]). \] Let $H^1_{\mathcal{A}}(K_v, E[p])$ be the image of this composed map. This defines $\mathcal{A}$, which is referred to as the \emph{twisted Selmer condition} with respect to $L/K$.

\begin{lemma}\label{v local F + A lemma}
    For every prime $v\in S$, we have that \[\op{ker}\gamma_v \simeq \frac{H^1_{\cF+\mathcal{A}}(K_v, E[p])}{H^1_{ \mathcal{A}}(K_v, E[p])}\simeq \frac{H^1_{\cF}(K_v, E[p])}{H^1_{ \cF\cap \mathcal{A}}(K_v, E[p])}.\]
\end{lemma}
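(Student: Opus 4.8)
The plan is to identify $\op{ker}\gamma_v$ with a local cohomology quotient via the Weil restriction module $A$, and then recognize both displayed expressions as the same quotient written two ways. First I would recall from \eqref{kernels are the same} and Lemma \ref{gamma_v injective when v splits} that, with $w \mid v$ a fixed prime of $L$, one has $\op{ker}\gamma_v \simeq \op{ker}\gamma_v' \simeq H^1(L_w/K_v, E(L_w))[p]$. The key observation is that $H^1(L_w/K_v, E(L_w))$ can be computed through the norm exact sequence for the Weil restriction: since $(\op{Res}_K^L E)(K_v) = E(L \otimes_K K_v) = \prod_{w\mid v} E(L_w)$ and $A = \ker(N_{L/K})$, Shapiro's lemma together with the long exact sequence in $\op{G}_{K_v}$-cohomology attached to $0 \to A \to \op{Res}_K^L E \to E \to 0$ relates $H^1(K_v, A)$ to the cokernel of the local norm map $E(L\otimes_K K_v) \to E(K_v)$ and to $H^1(K_v, E)$. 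After $\pi$-torsion is taken (using that $A[\p] \cong E[p]$ as $\op{G}_{K_v}$-modules and that $\cO$ acts on $A$ through the quotient $\Z[G] \twoheadrightarrow \cO$), this should produce a canonical isomorphism $\op{ker}\gamma_v \simeq H^1_{\cF + \mathcal{A}}(K_v, E[p]) / H^1_{\mathcal{A}}(K_v, E[p])$, where $H^1_{\cF}(K_v, E[p]) = \op{image}(\delta_{E,K_v})$ captures the "$E$-side" image and $H^1_{\mathcal{A}}(K_v, E[p]) = \op{image}(\lambda_{A,v})$ captures the "$A$-side" image under the identification $H^1(K_v, A[\p]) \xrightarrow{\sim} H^1(K_v, E[p])$. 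This is essentially the local computation in \cite[Section 4]{MazurRubinAnnals} and Brau's thesis, so I would cite those for the precise diagram chase.

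For the second isomorphism, I would invoke the elementary isomorphism of subquotients of $H^1(K_v, E[p])$: for any two subspaces $X, Y$ of a vector space, $(X+Y)/Y \cong X/(X\cap Y)$ via the inclusion $X \hookrightarrow X+Y$ followed by the quotient. Applying this with $X = H^1_{\cF}(K_v, E[p])$ and $Y = H^1_{\mathcal{A}}(K_v, E[p])$ immediately gives
\[
\frac{H^1_{\cF+\mathcal{A}}(K_v, E[p])}{H^1_{\mathcal{A}}(K_v, E[p])} \simeq \frac{H^1_{\cF}(K_v, E[p])}{H^1_{\cF\cap\mathcal{A}}(K_v, E[p])},
\]
using the definitions $H^1_{\cF+\mathcal{A}} = H^1_{\cF} + H^1_{\mathcal{A}}$ and $H^1_{\cF\cap\mathcal{A}} = H^1_{\cF} \cap H^1_{\mathcal{A}}$ recorded just before the lemma. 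This step is purely formal.

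The main obstacle is the first isomorphism: one must carefully track how the norm map on $E(L\otimes_K K_v)$, Shapiro's lemma for $\op{Res}_K^L$, and the Kummer maps for both $E$ and $A$ fit into a single commutative diagram, and verify that the $\op{G}_{K_v}$-isomorphism $A[\p] \xrightarrow{\sim} E[p]$ of \cite[Proposition 4.1]{MazurRubinAnnals} carries $H^1_{\mathcal{A}}$-classes to exactly the right subspace. In particular one needs that $A(K_v)/\pi A(K_v) \hookrightarrow H^1(K_v, A[\p])$ has image matching, under the norm sequence, the part of $H^1(K_v, E(L_w))[p]$ seen by restriction — this is where the distinction between the inert/ramified case (unique $w$, so $G$ acts on $L_w/K_v$ cyclically of order $p$) and the split case (where $\gamma_v$ is injective and the quotient vanishes, consistent with $H^1_{\cF} \subseteq H^1_{\mathcal{A}}$) must be reconciled uniformly. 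I would handle this by reducing to the local statement in \cite{MazurRubinAnnals, brau2014selmer} rather than reproving it, and only spell out the compatibility of the identifications with $\gamma_v$ that those references do not state in exactly this form.
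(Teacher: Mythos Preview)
Your proposal is correct and aligns with the paper's approach: the paper simply cites \cite[Lemma 5.1]{brau2014selmer} (and Proposition 4.4 of \emph{loc.\ cit.}) for the first isomorphism, while you sketch the underlying Weil-restriction/norm-sequence argument behind that citation and observe, as is implicit in the references, that the second isomorphism is the elementary vector-space identity $(X+Y)/Y \simeq X/(X\cap Y)$. There is no substantive difference in strategy.
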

\begin{proof}
    The result above is \cite[Lemma 5.1]{brau2014selmer}, and follows from Proposition 4.4 of \emph{loc. cit}.
\end{proof}

\begin{lemma}\label{gamma_v is injective for v|p}
    Let $v\in \Sigma_p$ be a prime at which $E$ has good ordinary reduction, and assume that $v$ is ramified in $L$. Then if $\widetilde{E}(k_v)[p]=0$, the map $\gamma_v$ is injective.  
\end{lemma}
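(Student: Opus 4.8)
The plan is to combine the identification of $\op{ker}\gamma_v$ in Lemma \ref{v local F + A lemma} with the local computations at $p$ of Mazur--Rubin and Brau. By that lemma it suffices to prove $H^1_{\cF}(K_v, E[p]) = H^1_{\mathcal{A}}(K_v, E[p])$, since then $\op{ker}\gamma_v \simeq H^1_{\cF+\mathcal{A}}(K_v,E[p])/H^1_{\mathcal{A}}(K_v,E[p]) = 0$. Both $H^1_{\cF}(K_v,E[p])$ (the image of the local Kummer map) and $H^1_{\mathcal{A}}(K_v,E[p])$ (the twisted Kummer image attached to $A$) are maximal isotropic for the local Tate pairing $H^1(K_v,E[p])\times H^1(K_v,E[p])\to H^2(K_v,\mu_p)\simeq\F_p$ induced by the Weil pairing; hence they have the same $\F_p$-dimension, and it is enough to exhibit a single common maximal isotropic subspace containing both.

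The natural candidate is the \emph{ordinary (Greenberg) condition}. Good ordinary reduction at $v$ furnishes a short exact sequence of $\op{G}_{K_v}$-modules $0 \to E[p]^{\circ} \to E[p] \to E[p]^{\op{et}} \to 0$, where $E[p]^{\circ} = \widehat{E}[p]$ is the kernel of reduction (one-dimensional, isomorphic to $\mu_p$ up to an unramified twist) and $E[p]^{\op{et}} \cong \widetilde E[p]$ is unramified and one-dimensional; since the Weil pairing is alternating, $E[p]^{\circ}$ is its own orthogonal complement, so $H^1_{\op{ord}}(K_v, E[p]) := \op{image}\bigl(H^1(K_v, E[p]^{\circ}) \to H^1(K_v, E[p])\bigr)$ is maximal isotropic. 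First I would show $H^1_{\cF}(K_v,E[p]) \subseteq H^1_{\op{ord}}(K_v,E[p])$: from $0 \to \widehat E(\mathfrak{m}_v) \to E(K_v) \to \widetilde E(k_v) \to 0$ together with the hypothesis $\widetilde E(k_v)[p] = 0$ (so that the finite group $\widetilde E(k_v)$ has trivial $p$-quotient), the natural map $\widehat E(\mathfrak{m}_v)/p \to E(K_v)/p$ is an isomorphism, and the Kummer map of the formal group factors through $H^1(K_v, \widehat E[p]) = H^1(K_v, E[p]^{\circ})$; compatibility of the Kummer maps then yields the inclusion, hence equality by the dimension count.

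Next I would establish $H^1_{\mathcal{A}}(K_v,E[p]) \subseteq H^1_{\op{ord}}(K_v,E[p])$. Since $v \in \Sigma_p$ is ramified in $L$ and $[L:K]=p$, the completion $L_w/K_v$ is totally ramified of degree $p$, so $k_w = k_v$ and $v$ has a unique prime above it; hence $A(K_v) = \op{ker}\bigl(N_{L_w/K_v}\colon E(L_w) \to E(K_v)\bigr)$. On residue points the norm map is multiplication by $p$, so $\widetilde E(k_v)[p]=0$ forces $A(K_v)$ to lie inside the formal part, i.e. $A(K_v) = \op{ker}\bigl(N\colon \widehat E(\mathfrak{m}_{L_w}) \to \widehat E(\mathfrak{m}_v)\bigr)$. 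Using that the isomorphism $A[\p]\xrightarrow{\sim} E[p]$ is compatible with the connected--étale filtrations (implicit in \cite[Proposition 4.1]{MazurRubinAnnals}) and that $\pi$-multiplication on $A$ corresponds to $p$-multiplication on the formal group $\widehat E$, one checks that $\lambda_{A,v}$ carries $A(K_v)/\pi A(K_v)$ into the image of $H^1(K_v, E[p]^{\circ})$, giving $H^1_{\mathcal{A}}(K_v,E[p]) \subseteq H^1_{\op{ord}}(K_v,E[p])$ and hence equality. Combining the two equalities yields $H^1_{\cF}(K_v,E[p]) = H^1_{\mathcal{A}}(K_v,E[p])$, and the injectivity of $\gamma_v$ follows. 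The main obstacle is the last step: because $A/K_v$ need not have good reduction when $v$ ramifies in $L$, one cannot invoke the ordinary-reduction machinery for $A$ directly, and must instead trace the twisted Kummer class $\lambda_{A,v}$ through the norm map on $\widehat E$ over the ramified extension $L_w/K_v$ --- exactly the computation carried out by Brau \cite{brau2014selmer} and Mazur--Rubin \cite{MazurRubinAnnals}. An alternative route uses Lemma \ref{gamma_v injective when v splits} to write $\op{ker}\gamma_v \simeq H^1(\op{Gal}(L_w/K_v), E(L_w))$ and shows this vanishes via a Herbrand-quotient computation for $E(L_w)$, but that requires controlling the $\Z_p[\op{Gal}(L_w/K_v)]$-module structure of the formal group, which is itself delicate.
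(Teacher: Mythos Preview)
Your route is genuinely different from the paper's. The paper does not introduce the ordinary (Greenberg) subspace at all: it invokes \cite[Proposition~5.2]{MazurRubinAnnals} to identify
\[
\frac{H^1_{\cF}(K_v,E[p])}{H^1_{\cF\cap\mathcal A}(K_v,E[p])}\ \simeq\ \frac{E(K_v)}{E(K_v)\cap N_{L_w/K_v}E(L_w)},
\]
uses the hypothesis $\widetilde E(k_v)[p]=0$ to replace $E$ by its formal group $E_1$ in this quotient, and then appeals to \cite[Proposition~B.2]{MazurRubinAnnals} to conclude that $E_1(K_v)=N_{L_w/K_v}E_1(L_w)$ in the good ordinary case. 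That is the whole argument.

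Your inclusion $H^1_{\cF}\subseteq H^1_{\op{ord}}$ is correct and well argued, and the dimension count showing $H^1_{\op{ord}}$ is maximal isotropic goes through under the hypothesis $\widetilde E(k_v)[p]=0$. The gap is in $H^1_{\mathcal A}\subseteq H^1_{\op{ord}}$. Your justification rests on the claim that ``$\pi$-multiplication on $A$ corresponds to $p$-multiplication on the formal group $\widehat E$'', but this is not so: under the surjection $\Z[G]\twoheadrightarrow\cO$ one has $\sigma\mapsto\zeta$, so $\pi=1-\zeta$ acts on $A(K_v)\subseteq E(L_w)$ as the endomorphism $1-\sigma$, not as multiplication by $p$ (indeed $\pi^{p-1}\cO=p\cO$, not $\pi\cO=p\cO$). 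Thus dividing $P\in A(K_v)$ by $\pi$ amounts to solving $(1-\sigma)Q=P$ in $A(\bar K_v)$, and there is no a priori reason the resulting cocycle $\tau\mapsto\tau Q-Q$ dies in $H^1(K_v,\widetilde E[p])$; the ``compatibility with connected--\'etale filtrations'' you invoke is not available, since $A$ typically has bad reduction at $v$ when $v$ ramifies in $L$, as you yourself note. Making this step precise forces you to analyze the norm map on $E_1(L_w)$, which is exactly the content of \cite[Proposition~B.2]{MazurRubinAnnals}; so the detour through $H^1_{\op{ord}}$ does not avoid the hard local computation, it only repackages it, and the paper's direct route via the norm cokernel is shorter.
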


\begin{proof}
    Lemma \ref{v local F + A lemma} asserts that 
    \begin{equation}\label{boring eq 1}\op{ker}\gamma_v \simeq \frac{H^1_{\cF}(K_v, E[p])}{H^1_{\cF\cap \mathcal{A}}(K_v, E[p])}.\end{equation}
    It follows from \cite[Proposition 5.2]{MazurRubinAnnals} that 
    \begin{equation}\label{boring eq 2}\frac{H^1_{\cF}(K_v, E[p])}{H^1_{\cF\cap \mathcal{A}}(K_v, E[p])}\simeq \frac{E(K_v)}{\left(E(K_v)\cap N_{L_w/K_v} E(L_w)\right)}.\end{equation} 
    Denote by $E_1(K_v)$ (resp. $E_1(L_w)$) the kernel of the reduction map $E(K_v)\rightarrow \widetilde{E}(k_v)$ (resp. $E(L_w)\rightarrow \widetilde{E}(k_w)$). Since $\widetilde{E}(k_v)[p]=0$ and $L/K$ is a $p$-extension, it follows that 
    $\widetilde{E}(k_w)[p]=0$ (cf. Lemma \ref{lemma V^G=0 implies V=0}). Therefore, we find that
    \[\begin{split}
        E(K_v)/p E(K_v)\simeq E_1(K_v)/p E_1(K_v),\\
        E(L_w)/p E(L_w)\simeq E_1(L_w)/p E_1(L_w).\\
    \end{split}\]
     Hence it is easy to see that 
    \[\frac{E(K_v)}{\left(E(K_v)\cap N_{L_w/K_v} E(L_w)\right)}\simeq \frac{E_1(K_v)}{\left(E_1(K_v)\cap N_{L_w/K_v} E_1(L_w)\right)}.\]
    
    From \cite[Proposition B.2]{MazurRubinAnnals} it follows that \[\frac{E_1(K_v)}{\left(E_1(K_v)\cap N_{L_w/K_v} E_1(L_w)\right)}=0.\] Combining this with \eqref{boring eq 1} and \eqref{boring eq 2}, we conclude that $\gamma_v$ is injective.
\end{proof}

\begin{lemma}
    Let $v\in \Sigma_p$ be a prime at which $E$ has good reduction and assume that $v$ is inert in $L$. Then, $\gamma_v$ is injective.
\end{lemma}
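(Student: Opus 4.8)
The plan is to reduce the injectivity of $\gamma_v$ to the vanishing of a local Galois cohomology group that dies because $L_w/K_v$ is unramified. Since $v$ is inert in $L$, there is a unique prime $w$ of $L$ above $v$, the extension $L_w/K_v$ is the unramified extension of degree $p$, and $\op{Gal}(L_w/K_v)\cong \op{Gal}(k_w/k_v)\cong \Z/p\Z$. By Lemma \ref{gamma_v injective when v splits},
\[\op{ker}\gamma_v\simeq H^1(L_w/K_v, E(L_w))[p],\]
so it suffices to prove $H^1(L_w/K_v, E(L_w))=0$.

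To this end I would use the $\op{Gal}(L_w/K_v)$-equivariant short exact sequence coming from good reduction,
\[0\rightarrow E_1(L_w)\rightarrow E(L_w)\rightarrow \widetilde{E}(k_w)\rightarrow 0,\]
where $E_1(L_w)$ is the kernel of reduction (equivalently, the formal group of $E$ at $v$ on $\mathfrak{m}_{L_w}$), and treat the two outer terms separately. For the reduction, inflation--restriction embeds $H^1(L_w/K_v, \widetilde{E}(k_w))$ into $H^1(k_v, \widetilde{E})$, which vanishes by Lang's theorem for the connected algebraic group $\widetilde{E}$ over the finite field $k_v$. For the formal group, fix a uniformizer $\pi$ of $K_v$; since $L_w/K_v$ is unramified, $\pi$ is also a uniformizer of $L_w$, and multiplication by $\pi^n$ identifies the successive quotients $\mathfrak{m}_{L_w}^n/\mathfrak{m}_{L_w}^{n+1}$ of the standard filtration $E_1(L_w)\supseteq E_2(L_w)\supseteq\cdots$, as $\op{Gal}(L_w/K_v)=\op{Gal}(k_w/k_v)$-modules, with $k_w$. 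By the normal basis theorem $k_w$ is a free $k_v[\op{Gal}(k_w/k_v)]$-module, hence cohomologically trivial; a dévissage then gives $H^i\bigl(L_w/K_v,\, E_1(L_w)/E_n(L_w)\bigr)=0$ for $i\geq 1$ and all $n$, and since these quotients are finite, passing to the inverse limit (exact on cohomology here because the relevant cochain complexes form a Mittag-Leffler system) yields $H^1(L_w/K_v, E_1(L_w))=0$. Feeding both vanishings into the long exact cohomology sequence gives $H^1(L_w/K_v, E(L_w))=0$, hence $\op{ker}\gamma_v=0$ and $\gamma_v$ is injective. Equivalently, one could simply invoke the well-known fact that $E(L_w)$ is a cohomologically trivial $\op{Gal}(L_w/K_v)$-module when $E$ has good reduction and $L_w/K_v$ is unramified, via its identification with the points of an abelian scheme over the cohomologically trivial module $\cO_{L_w}$.

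The step I expect to be the main obstacle is the vanishing of $H^1(L_w/K_v, E_1(L_w))$. This is precisely where the hypothesis that $v$ is \emph{inert} (rather than ramified) enters essentially: over a ramified extension the analogous group can be nonzero, which is why Lemma \ref{gamma_v is injective for v|p} needed the extra assumption $\widetilde{E}(k_v)[p]=0$. The vanishing here rests on the existence of a normal integral basis for the unramified extension $\cO_{L_w}/\cO_{K_v}$, which makes every graded piece of the pro-$p$ formal group a copy of the cohomologically trivial module $k_w$; the only mild technical point is to check that the conclusion survives the passage to the inverse limit, which it does because all the finite-level quotients are finite groups.
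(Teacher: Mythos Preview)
Your proof is correct and takes a genuinely different route from the paper's. The paper does not compute $H^1(L_w/K_v, E(L_w))$ directly; instead it borrows from the proof of Lemma~\ref{gamma_v is injective for v|p} the identification of $\ker\gamma_v$ with the norm cokernel $E_1(K_v)/\bigl(E_1(K_v)\cap N_{L_w/K_v}E_1(L_w)\bigr)$ (ultimately resting on Lemma~\ref{v local F + A lemma} and \cite[Proposition~5.2]{MazurRubinAnnals}), and then cites Hazewinkel for the surjectivity of the norm on the formal group in unramified extensions. Your approach bypasses the twisted Selmer structure $\mathcal{A}$ entirely: starting from the inflation--restriction description $\ker\gamma_v \simeq H^1(L_w/K_v, E(L_w))[p]$ of Lemma~\ref{gamma_v injective when v splits}, you kill the reduction piece via Lang's theorem and the formal-group piece via the normal basis theorem applied to the graded quotients $k_w$. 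This is essentially the classical argument (going back to Mazur) that $E(L_w)$ is cohomologically trivial over $\op{Gal}(L_w/K_v)$ whenever $E$ has good reduction and $L_w/K_v$ is unramified. The paper's route is shorter given the machinery already in place, while yours is more self-contained and makes the role of unramifiedness completely transparent.
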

\begin{proof}
From the proof of Lemma \ref{gamma_v is injective for v|p}, we find that 
\[\op{ker}\gamma_v\simeq \frac{E_1(K_v)}{\left(E_1(K_v)\cap N_{L_w/K_v} E_1(L_w)\right)}.\]
The norm map $N_{L_w/K_v}: E_1(L_w)\rightarrow E_1(K_v)$ is surjective when $L_w/K_v$ is unramified (cf. \cite[Proposition 3.1]{hazewinkel}). Therefore, $\gamma_v$ is injective and this proves the result.
\end{proof}

\par Next, we deal with the case when $v\in \Sigma_p$ is a prime at which $E$ has good supersingular reduction. Let $\cK$ be a finite extension of $\Q_p$ with valuation ring $\cO_\cK$, maximal ideal $\mathfrak{m}_\cK$ and residue field $k=\cO_\cK/\mathfrak{m}_\cK$. Let $\cF(X, Y)\in \cO\llbracket X,Y\rrbracket$ be a formal group law and $\cF(\mathfrak{m}_\cK)$ be the induced abelian group. The group operation is defined as follows 
\[x\oplus_{\cF} y:=\cF(x,y)\] for $x,y \in \mathfrak{m}_\cK$. Let $\cL/\cK$ be a finite Galois extension with Galois group $\mathcal{G}=\op{Gal}(\cL/\cK)$. Writing $\cG=\{\sigma_1, \dots, \sigma_n\}$, consider the induced norm map
\[N_{\cL/\cK}: \cF(\mathfrak{m}_\cL)\rightarrow \cF(\mathfrak{m}_\cK),\] defined by 
\[N_{\cL/\cK}(x):=\sigma_1(x)\oplus_{\cF} \sigma_2(x)\oplus_{\cF}\dots \oplus_{\cF} \sigma_n(x).\]
Assume that $\cG\simeq \Z/p\Z$ and $\sigma$ be a generator of $\cG$. Let $v_\cL$ (resp. $v_\cK$) denote the normalized valuation of $\cO_\cL$ (resp. $\cO_\cK$). Assume that $\cL/\cK$ is a ramified extension. Set $\pi_\cL$ (resp. $\pi_\cK$) to denote a uniformizer of $\cO_\cL$ (resp. $\cO_\cK$). Setting
\begin{equation}\label{defn of t}t:=v_\cL\left(\sigma(\pi_\cL)-\pi_\cL\right)-1,\end{equation} we note that $t\geq 1$ since $\cL/\cK$ must be a wildly ramified extension. On the other hand, suppose $\cL/\cK$ is unramified, then, $t:=-1$. 

\par Suppose now that $L/K$ is a $\Z/p\Z$-extension of number fields, $v\in \Sigma_K$ and $w|v$ is a prime of $L$. Let $t_v:=t$ be the invariant defined above \eqref{defn of t} for $\cL:=L_w$ and $\cK:=K_v$. 

\begin{proposition}\label{supersingular redn gamma_v injective propn}
    Let $E$ be an elliptic curve with good supersingular reduction at a prime $v\in \Sigma_p$. Suppose that $v$ is ramified in $L$. Then, $\gamma_v$ is injective if and only if $t_v=1$.
\end{proposition}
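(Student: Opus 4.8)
The plan is to run the same machinery as in Lemma~\ref{gamma_v is injective for v|p}, but replacing the formal-group norm-surjectivity input from the ordinary case (\cite[Proposition B.2]{MazurRubinAnnals}) with a finer analysis of the norm map on the formal group of a supersingular curve in a wildly ramified $\Z/p\Z$-extension. First I would invoke Lemma~\ref{v local F + A lemma} together with \cite[Proposition 5.2]{MazurRubinAnnals} exactly as in the ordinary case to get
\[
\op{ker}\gamma_v \;\simeq\; \frac{E(K_v)}{E(K_v)\cap N_{L_w/K_v}E(L_w)}.
\]
Since $v$ is a prime of good reduction, $\widetilde{E}(k_v)$ has order prime to $p$ for a supersingular prime (indeed $\#\widetilde{E}(k_v)\equiv \#k_v + 1 \pmod p$, but more to the point we are in $\Sigma_p$ and good supersingular reduction forces $E(K_v)/pE(K_v)\simeq E_1(K_v)/pE_1(K_v)$ by the same reduction-exact-sequence argument). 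So the problem reduces to computing whether the norm map $N_{L_w/K_v}\colon \cF(\mathfrak m_{L_w})\to \cF(\mathfrak m_{K_v})$ is surjective, where $\cF$ is the formal group of $E$ over $\cO_{K_v}$, a formal group of height $2$.

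The heart of the matter is then a local computation on the formal group. The standard tool is to filter $\cF(\mathfrak m_{L_w})$ and $\cF(\mathfrak m_{K_v})$ by the subgroups $\cF(\mathfrak m^i)$ and analyze the norm map on successive quotients $\cF(\mathfrak m^i)/\cF(\mathfrak m^{i+1})\simeq \mathfrak m^i/\mathfrak m^{i+1}\simeq k$, where the induced map is essentially multiplication by a trace/norm on residue fields shifted according to the ramification. For a wildly ramified $\Z/p\Z$-extension with ramification break controlled by $t_v = v_{L_w}(\sigma(\pi_{L_w})-\pi_{L_w})-1$, one shows that the cokernel of $N_{L_w/K_v}$ on the formal group is trivial precisely when $t_v$ is small enough relative to the height; for height $2$ and $p$ the residue characteristic, the precise threshold works out to $t_v = 1$. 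Concretely: I expect one proves that $\cF(\mathfrak m_{K_v})/N_{L_w/K_v}\cF(\mathfrak m_{L_w})$ is killed (is zero) iff $t_v=1$, by an explicit induction up the filtration comparing the valuation $v_{L_w}$ of $N_{L_w/K_v}(x)$ with that of $x$, using that on a height-$2$ formal group the multiplication-by-$p$ map has the effect $v(\,[p](x)\,) = p^2 v(x)$ for small $v(x)$ (as opposed to $v([p](x)) = p\,v(x)$ in the ordinary case), which is exactly the structural difference that moves the threshold from ``$t\le$ (something involving $e$ and $p$)'' in the ordinary case to the rigid requirement $t_v=1$ here. This is presumably carried out in the literature — I would cite the relevant statement from Mazur--Rubin's supersingular analysis, or from work on norm maps on formal groups (e.g. the appendix of \cite{MazurRubinAnnals} has the ordinary version; a supersingular analogue is what is needed, and the paper likely has this as a cited proposition rather than re-proving it).

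\textbf{Where the work concentrates.} The genuinely delicate step is the ``only if'' direction together with the boundary case analysis: showing that when $t_v > 1$ the norm map on the formal group genuinely fails to be surjective (so $\op{ker}\gamma_v\neq 0$), and conversely that $t_v=1$ really does suffice. This requires pinning down the exact indices in the filtration where the norm map on graded pieces is zero versus an isomorphism, which in turn depends delicately on the interaction between the wild ramification filtration of $L_w/K_v$ and the filtration coming from the height-$2$ formal group law — in particular on whether certain integers like $\lceil (t_v+1)/p\rceil$ or combinations with the absolute ramification index $e(K_v/\Q_p)$ land in the ``bad'' range. The case $t_v=1$ is exactly the minimal wild break, and one expects the counting to degenerate favorably there (the Herbrand/Hasse--Arf data is as simple as possible, and the norm on each graded piece that could obstruct surjectivity turns out to hit everything), while any larger break creates a graded piece on which the norm is identically zero and whose contribution to the cokernel cannot be cancelled. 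I would organize this as: (i) reduce to the formal group via Lemma~\ref{v local F + A lemma} and \cite[Prop.~5.2]{MazurRubinAnnals}; (ii) reduce to surjectivity of $N_{L_w/K_v}$ on $\cF(\mathfrak m_{L_w})$; (iii) filter and compute graded pieces, identifying $t_v=1$ as the precise surjectivity criterion; and flag step (iii) as the one that should either be imported wholesale from Mazur--Rubin's supersingular computations or, if it must be done by hand, done by the explicit valuation-tracking induction sketched above.
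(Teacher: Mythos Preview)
Your proposal is correct and lands at the same endpoint as the paper---the question of whether $N_{L_w/K_v}\colon E_1(L_w)\to E_1(K_v)$ is surjective, with the answer ``iff $t_v=1$''---but the reduction to that endpoint proceeds along a slightly different track. You follow the template of the ordinary case (Lemma~\ref{gamma_v is injective for v|p}): invoke Lemma~\ref{v local F + A lemma} and \cite[Proposition~5.2]{MazurRubinAnnals} to identify $\ker\gamma_v$ with $E(K_v)/(E(K_v)\cap N\,E(L_w))$, then pass to the formal group using $\widetilde{E}(k_v)[p]=0$. The paper instead goes directly through inflation--restriction (Lemma~\ref{gamma_v injective when v splits}) to get $\ker\gamma_v\simeq H^1(G,E(L_w))$, compares this with $H^1(G,E_1(L_w))$ via the reduction exact sequence (using $p\nmid\#\widetilde{E}(k_w)$), and then uses that the Herbrand quotient of $E_1(L_w)$ is $1$ (citing \cite[Proposition~3.6]{brau2014selmer}) to identify $H^1(G,E_1(L_w))$ with the norm cokernel. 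Your route is more uniform with the ordinary argument; the paper's route avoids invoking \cite[Proposition~5.2]{MazurRubinAnnals} and stays closer to Brau's presentation. For the final equivalence $E_1(K_v)=N\,E_1(L_w)\Leftrightarrow t_v=1$, the paper simply cites \cite[Proposition~3.5]{brau2014selmer} rather than carrying out the filtration analysis you sketch; your outline of that step (valuation-tracking on graded pieces of the height-$2$ formal group) is indeed the shape of Brau's argument.
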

\begin{proof}
    The result is \cite[Proposition 5.10]{brau2014selmer}; we give a brief sketch. Let $w$ be the prime of $L$ that lies above $v$. We identify $\op{Gal}(L_w/K_v)$ with $G$. Since $E$ has good supersingular reduction at $v$, we find that 
    \[a_v(E):=\op{Norm}_{K/\Q}(v)+1-\# \widetilde{E}(k_v)\] is divisible by $p$. This implies that $\widetilde{E}(k_v)[p]=0$, in other words, 
    \[\left(\widetilde{E}(k_w)[p]\right)^{G}=0.\]
    It follows from Lemma \ref{lemma V^G=0 implies V=0} that $\widetilde{E}(k_w)[p]=0$. Let $E_1$ denote the formal group of $E$. Consider the short exact sequence of $G$-modules
    \[0\rightarrow E_1(L_w)\rightarrow E(L_w)\rightarrow \widetilde{E}(k_w)\rightarrow 0\] and the associated long exact sequence in cohomology
    \[\widetilde{E}(k_v)\rightarrow H^1(G, E_1(L_w))\rightarrow H^1(G, E(L_w))\rightarrow H^1(G, \widetilde{E}(k_w)). \]
    Since $p\nmid \#\widetilde{E}(k_w)$, it follows from the restriction-corestriction sequence that \[H^1(G,\widetilde{E}(k_w))=0.\] On the other hand, $H^1(G, E_1(L_w))$ is a $p$-primary group, and $p\nmid \# \widetilde{E}(k_v)$. Hence, there is an isomorphism 
    \[H^1(G, E(L_w))\simeq H^1(G, E_1(L_w)).\]
    The proof of \cite[Proposition 3.6]{brau2014selmer} shows that the Herbrand-quotient of $E_1(L_w)$ is $1$, and hence, 
    \[H^1(G, E_1(L_w))\simeq H^2(G, E_1(L_w))\simeq E_1(K_v)/N_{L_w/K_v}\left( E_1(L_w)\right).\] Then \cite[Proposition 3.5]{brau2014selmer} shows that
    \[E_1(K_v)=N_{L_w/K_v}\left( E_1(L_w)\right)\Leftrightarrow t_v=1.\]
\end{proof}
\begin{lemma}\label{gamma_v injective base change lemma}
    Let $E$ be an elliptic curve and $v$ be a prime of $K$. Assume that there is a finite Galois extension $K'/K$ with $p\nmid [K':K]$ such that $\gamma_{v'}$ is injective for some prime $v'|v$ of $K'$. Then, $\gamma_v$ itself is injective.
\end{lemma}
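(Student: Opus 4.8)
The plan is to reduce the injectivity of $\gamma_v$ over $K_v$ to that of $\gamma_{v'}$ over $K'_{v'}$ by a standard restriction–corestriction argument, exploiting that $[K':K]$ — hence each local degree $[K'_{v'}:K_v]$ — is prime to $p$. First I would recall from \eqref{kernels are the same} and Lemma \ref{gamma_v injective when v splits} that $\ker\gamma_v \simeq H^1(L_w/K_v, E(L_w))[p]$, where $w\mid v$ is a prime of $L$; similarly, writing $L':=LK'$ and picking compatible primes $w'\mid v'$ of $L'$, one has $\ker\gamma_{v'}\simeq H^1(L'_{w'}/K'_{v'}, E(L'_{w'}))[p]$. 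Since $p\nmid [K':K]$, the extension $K'_{v'}/K_v$ is linearly disjoint from the $p$-extension $L_w/K_v$, so $\op{Gal}(L'_{w'}/K'_{v'})$ is identified with $G=\op{Gal}(L_w/K_v)$, and restriction gives a map $H^1(L_w/K_v,E(L_w))\to H^1(L'_{w'}/K'_{v'},E(L'_{w'}))$ compatible with the identifications above.

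The key step is that this restriction map is injective on $p$-torsion. Indeed, the composite of restriction $\op{res}\colon H^1(L_w/K_v, E(L_w))\to H^1(L'_{w'}/K'_{v'}, E(L'_{w'}))$ with corestriction $\op{cor}$ back to $H^1(L_w/K_v, E(L_w))$ is multiplication by $[K'_{v'}:K_v]$, which is a unit modulo $p$; hence $\op{res}$ is injective on the $p$-primary part, and in particular on the $p$-torsion. Therefore the natural map $\ker\gamma_v\hookrightarrow \ker\gamma_{v'}$ is injective. By hypothesis $\gamma_{v'}$ is injective, i.e. $\ker\gamma_{v'}=0$, and so $\ker\gamma_v=0$, proving that $\gamma_v$ is injective.

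A minor subtlety to check carefully is the bookkeeping when $v$ is split or partially split in $L$ and in $K'$: one should fix a prime $w$ of $L$ above $v$, a prime $v'$ of $K'$ above $v$, and a prime $w'$ of $L'$ above both, and use \eqref{kernels are the same} to pass from $\gamma_v$ to the single-prime version $\gamma'_v$ on each side, so that the decomposition groups match up correctly; the case $L_w/K_v$ unramified (where $\gamma_v$ is automatically injective by Lemma \ref{gamma_v injective when v splits}) is trivial and can be set aside. The main obstacle is thus not any deep input but simply ensuring the local field diagram $K_v\subseteq K'_{v'}$, $K_v\subseteq L_w$, $L'_{w'}=L_wK'_{v'}$ is set up so that restriction–corestriction applies with the right Galois groups; once that is in place the argument is the one-line unit-multiple computation above.
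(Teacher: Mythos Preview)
Your argument is correct and rests on the same idea as the paper's: pass to the compositum $L'=LK'$, use that $[K'_{v'}:K_v]$ is prime to $p$, and conclude that the relevant $p$-torsion vanishes. The only difference is packaging. The paper runs two inflation--restriction sequences, first for the tower $L'_{w'}/K'_{v'}/K_v$ to deduce $H^1(L'_{w'}/K_v,E(L'_{w'}))[p]=0$ from $\ker\gamma_{v'}=0$, and then for $L'_{w'}/L_w/K_v$ to conclude $H^1(L_w/K_v,E(L_w))[p]=0$. You instead produce a single injection $\ker\gamma_v\hookrightarrow\ker\gamma_{v'}$ via a restriction--corestriction style argument. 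One small point worth tightening: the groups $\op{Gal}(L_w/K_v)$ and $\op{Gal}(L'_{w'}/K'_{v'})$ are isomorphic rather than one being a subgroup of the other, so your ``corestriction'' is not the literal group-cohomological corestriction; it is the map on $H^1(G,-)$ induced by the $G$-equivariant norm $N_{L'_{w'}/L_w}\colon E(L'_{w'})\to E(L_w)$, whose composite with the inclusion is multiplication by $[L'_{w'}:L_w]=[K'_{v'}:K_v]$. Alternatively (and perhaps more cleanly), you can run the genuine restriction--corestriction on the absolute groups $H^1(K_v,E)[p]\to H^1(K'_{v'},E)[p]$, noting that restriction carries $\ker\gamma_v$ into $\ker\gamma_{v'}$; then $\op{cor}\circ\op{res}=[K'_{v'}:K_v]$ in the standard sense. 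Either way the conclusion stands, and the two proofs are essentially equivalent.
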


\begin{proof}
    Assume without loss of generality that $v$ is inert or ramified in $L$. Setting $L':=K'\cdot L$, let $w'$ be the prime of $L'$ that lies above $v'$. Let $w$ be a prime of $L$ that lies below $w'$, as depicted
     \[ \begin{tikzpicture}[scale=.8]
    \begin{scope}[xshift=0cm]
    \node (Q1) at (0,0) {$K$};
    \node (Q2) at (2,2) {$L$};
    \node (Q3) at (0,4) {$L'$};
    \node (Q4) at (-2,2) {$K'$};

    \draw (Q1)--(Q2) node [pos=0.7, below,inner sep=0.25cm] {$G$};
    \draw (Q1)--(Q4) node [pos=0.7, below,inner sep=0.25cm] {};
    \draw (Q3)--(Q4) node [pos=0.7, above,inner sep=0.25cm] {$G$};
    \draw (Q2)--(Q3) node [pos=0.3, above,inner sep=0.25cm] {};
    \end{scope}

    \begin{scope}[xshift=10cm]
    \node (Q1) at (0,0) {$v.$};
    \node (Q2) at (2,2) {$w$};
    \node (Q3) at (0,4) {$w'$};
    \node (Q4) at (-2,2) {$v'$};

    \draw (Q1)--(Q2) node [pos=0.7, below,inner sep=0.25cm] {};
    \draw (Q1)--(Q4) node [pos=0.7, below,inner sep=0.25cm]{};
    \draw (Q3)--(Q4) node [pos=0.7, above,inner sep=0.25cm]{};
    \draw (Q2)--(Q3) node [pos=0.7, above,inner sep=0.25cm]{};
    \end{scope}
    \end{tikzpicture}
\]Since $K'/K$ is a prime to $p$-extension,  $\op{Gal}(L'_{w'}/K'_{v'})$ is naturally isomorphic to $\op{Gal}(L_w/K_v)$. By hypothesis, the kernel of $\gamma_{v'}$ is trivial, hence, \[H^1(\op{Gal}(L_{w'}'/K_{v'}'), E)[p]=0.\]Consider the inflation-restriction sequence for the field extensions $L'_{w'}/K'_{v'}/K_v$
\[0\rightarrow H^1(K'_{v'}/K_v, E)[p]\rightarrow H^1(L'_{w'}/K_v, E)[p]\rightarrow H^1(L'_{w'}/K'_{v'}, E)[p].\] Since $K'/K$ is a prime to $p$-extension, it follows that 
\[H^1(K'_{v'}/K_v, E)[p]=0,\] and we deduce that
    \[H^1(\op{Gal}(L_{w'}'/K_{v}), E)[p]=0. \] Once again, from the inflation-restriction sequence for the fields $L'_{w'}/L_{w}/K_v$, we deduce that 
    \[H^1(\op{Gal}(L_w/K_v), E)[p]=0\] and hence, $\gamma_v$ is injective. 
\end{proof}

\begin{lemma}\label{potential v|p lemma}
    Let $E$ be an elliptic curve and $v|p$ be a prime of $K$ at which $E$ has potentially good reduction. Assume that there is a finite Galois extension $K'/K$ such that $p\nmid [K':K]$ and for a prime $v'|v$ of $K'$, either of the following conditions hold
    \begin{enumerate}
        \item $E$ has good ordinary reduction at $v'$ and $\widetilde{E}(k_{v'})[p]=0$, or, 
        \item $E$ has good supersingular reduction at $v'$ and $t_{v'}\leq 1$. 
    \end{enumerate}
    Then, $\gamma_v$ is injective.
\end{lemma}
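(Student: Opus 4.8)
The plan is to deduce this lemma from the base change result, Lemma~\ref{gamma_v injective base change lemma}, together with the injectivity criteria already established for primes above $p$ with good reduction. First I would record that, since $[K':K]$ is prime to $p$ while $[L:K]=p$, the compositum $L':=K'\cdot L$ satisfies $L\cap K'=K$, so $\op{Gal}(L'/K')\cong\op{Gal}(L/K)\cong\Z/p\Z$; hence $L'/K'$ is again a $\Z/p\Z$-extension and the constructions of this section apply to it. By Lemma~\ref{gamma_v injective base change lemma} it then suffices to prove that $\gamma_{v'}$, formed with respect to $L'/K'$, is injective for the given prime $v'\mid v$ of $K'$. Observe that $v'\mid p$, so $v'$ lies in the finite set $S$ attached to $L'/K'$, and that $E$ has good reduction at $v'$ by hypothesis.

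Next I would split into cases according to the decomposition of $v'$ in $L'$. If $v'$ splits in $L'$, injectivity of $\gamma_{v'}$ is immediate from Lemma~\ref{gamma_v injective when v splits}. If $v'$ is inert in $L'$, then since $E$ has good reduction at $v'$, injectivity follows from the lemma treating inert primes above $p$ of good reduction (the lemma immediately following Lemma~\ref{gamma_v is injective for v|p}). The substantive case is when $v'$ is ramified in $L'$; let $w'$ be the prime of $L'$ above it. In case~(1) --- good ordinary reduction at $v'$ with $\widetilde{E}(k_{v'})[p]=0$ --- this is precisely the hypothesis of Lemma~\ref{gamma_v is injective for v|p}, so $\gamma_{v'}$ is injective. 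In case~(2) --- good supersingular reduction at $v'$ --- I would argue that because $v'$ ramifies in $L'$, the degree-$p$ extension $L'_{w'}/K'_{v'}$ of $p$-adic fields is necessarily wildly ramified, forcing $t_{v'}\geq 1$; combined with the hypothesis $t_{v'}\leq 1$ this gives $t_{v'}=1$, and Proposition~\ref{supersingular redn gamma_v injective propn} then yields injectivity of $\gamma_{v'}$. (When $v'$ is unramified in $L'$ one has $t_{v'}=-1$ by convention, so the hypothesis $t_{v'}\leq 1$ is automatic there and carries content only in the ramified case.) Having shown $\gamma_{v'}$ is injective in every case, Lemma~\ref{gamma_v injective base change lemma} gives that $\gamma_v$ is injective.

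This argument is essentially a bookkeeping exercise assembling earlier results, so I do not anticipate a genuine obstacle; the one point requiring a small argument rather than a citation is the observation that a ramified degree-$p$ extension of $p$-adic fields is wildly ramified, which forces $t_{v'}\geq 1$ and collapses the supersingular hypothesis $t_{v'}\leq 1$ to the equality $t_{v'}=1$ needed to apply Proposition~\ref{supersingular redn gamma_v injective propn}. I should also take care that passing from $K$ to $K'$ genuinely preserves the $\Z/p\Z$ nature of the extension (which it does, as $p\nmid[K':K]$) and that $v'$ still lies in the relevant set of primes $S$, as noted above.
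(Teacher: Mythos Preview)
Your proof is correct and follows essentially the same approach as the paper, which simply cites Lemma~\ref{gamma_v is injective for v|p}, Proposition~\ref{supersingular redn gamma_v injective propn}, and Lemma~\ref{gamma_v injective base change lemma}. Your write-up is more detailed in that you explicitly treat the split and inert behavior of $v'$ in $L'$ and spell out why the hypothesis $t_{v'}\le 1$ collapses to $t_{v'}=1$ in the ramified supersingular case, but these are precisely the details the paper leaves implicit.
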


\begin{proof}
    The result follows from Lemma \ref{gamma_v is injective for v|p}, Proposition \ref{supersingular redn gamma_v injective propn} and Lemma \ref{gamma_v injective base change lemma}.
\end{proof}

\subsection{Rank stability in prime cyclic extensions}
\par In this subsection, we study applications of our local computations to rank stability questions in $\Z/p\Z$-extensions of $K$. 
\begin{theorem}\label{main theorem of section 3}
    Let $K$ be a number field and $E$ be an elliptic curve over $K$. Let $p$ be an odd prime number and assume that $\op{Sel}_p(E/K)=0$. Let $L/K$ be a degree $p$ Galois extension and $S$ be the set of primes $v$ of $K$ such that any one of the following hold:
\begin{itemize}
    \item $v|p$, 
    \item $E$ has bad reduction at $v$, 
    \item $v$ is ramified in $L$.
\end{itemize}
Write $S=\Sigma_1\cup \Sigma_2 \cup \Sigma_p$, where $\Sigma_1$ is the set of primes of $K$ at which $E$ has bad reduction and $\Sigma_2:=S\setminus (\Sigma_1 \cup \Sigma_p)$. Assume that the following conditions are satisfied.
\begin{enumerate}
    \item Each prime $v\in \Sigma_1$ splits completely in $L$.
    \item For each prime $v\in \Sigma_2$, we have that $\widetilde{E}(k_v)[p]=0$.
    \item For each prime $v\in \Sigma_p$ which is not completely split in $L$, there is a prime to $p$ Galois extension $K'/K$ and a prime $v'|v$ of $K'$ such that either 
    \begin{itemize}
        \item $E$ has good ordinary reduction at $v'$ and $\widetilde{E}(k_{v'})[p]=0$, or, 
        \item $E$ has good supersingular reduction at $v'$ and $t_{v'}\leq 1$. 
    \end{itemize}
\end{enumerate}
Then, the $p$-Selmer group $\op{Sel}_p(E/L)=0$. In particular, $\op{rank} E(L)=0$ and $\Sh(E/L)[p]=0$.
\end{theorem}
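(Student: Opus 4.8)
The plan is to deduce the theorem directly from Proposition \ref{SelpL=0 propn}: since $\op{Sel}_p(E/K)=0$ and $L/K$ is a cyclic $p$-extension, that proposition says $\op{Sel}_p(E/L)=0$ as soon as the restriction map $\gamma_v$ is injective for every $v\in S$. So the entire argument is a case check over the three pieces of $S=\Sigma_1\cup\Sigma_2\cup\Sigma_p$ (which need not be disjoint, but this causes no harm, as noted below), invoking the local injectivity lemmas already established in this section.

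First, for $v\in\Sigma_1$: by hypothesis (1), $v$ splits completely in $L$, so Lemma \ref{gamma_v injective when v splits} gives that $\gamma_v$ is injective. (The split case is handled uniformly regardless of whether $v$ also lies above $p$, which disposes of the potential overlap $\Sigma_1\cap\Sigma_p$.) Second, for $v\in\Sigma_2=S\setminus(\Sigma_1\cup\Sigma_p)$: since $v\in S$ is neither a prime of bad reduction nor a prime above $p$, it must be ramified in $L$ (in accordance with the remark following Lemma \ref{gamma_v is injective for v does not divide p}). Then hypothesis (2), namely $\widetilde{E}(k_v)[p]=0$, is precisely the input needed for Lemma \ref{gamma_v is injective for v does not divide p}(2), giving that $\gamma_v$ is injective. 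Third, for $v\in\Sigma_p$: if $v$ splits completely in $L$ we again apply Lemma \ref{gamma_v injective when v splits}; otherwise, hypothesis (3) furnishes a prime-to-$p$ Galois extension $K'/K$ and a prime $v'\mid v$ of $K'$ at which $E$ has either good ordinary reduction with $\widetilde{E}(k_{v'})[p]=0$, or good supersingular reduction with $t_{v'}\leq 1$. In particular $E$ acquires good reduction at $v'$, hence $E$ has potentially good reduction at $v$, so Lemma \ref{potential v|p lemma} applies verbatim and $\gamma_v$ is injective.

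Having verified that $\gamma_v$ is injective for all $v\in S$, Proposition \ref{SelpL=0 propn} yields $\op{Sel}_p(E/L)=0$. For the final assertion, apply the short exact sequence \eqref{n Selmer exact sequence} with $n=p$ over the field $L$: it forces $E(L)/pE(L)=0$ and $\Sh(E/L)[p]=0$, and since $E(L)$ is a finitely generated abelian group, $E(L)/pE(L)=0$ gives both $E(L)[p]=0$ and $\op{rank} E(L)=0$.

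There is essentially no deep obstacle here—the substance lies entirely in the local injectivity results proved earlier in this section. The only points requiring a moment's care are bookkeeping: that the sets $\Sigma_1,\Sigma_2,\Sigma_p$ may overlap, which is harmless because the split hypothesis on $\Sigma_1$ is strictly stronger than what is needed; that every $v\in S$ indeed lands in one of the three handled cases, which is immediate from the definition $\Sigma_2:=S\setminus(\Sigma_1\cup\Sigma_p)$; and that condition (3) genuinely guarantees potentially good reduction at $v$, which is the hypothesis under which Lemma \ref{potential v|p lemma} was stated.
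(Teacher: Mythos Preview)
Your proof is correct and follows essentially the same approach as the paper: reduce to Proposition \ref{SelpL=0 propn} and verify injectivity of each $\gamma_v$ by invoking Lemma \ref{gamma_v injective when v splits} on $\Sigma_1$, Lemma \ref{gamma_v is injective for v does not divide p} on $\Sigma_2$, and Lemma \ref{potential v|p lemma} on $\Sigma_p$. If anything, your version is more careful than the paper's, since you explicitly separate the completely-split case in $\Sigma_p$ (where hypothesis (3) does not apply), address the overlap $\Sigma_1\cap\Sigma_p$, and spell out why hypothesis (3) guarantees the potentially-good-reduction premise of Lemma \ref{potential v|p lemma}.
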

\begin{proof}
    The Proposition \ref{SelpL=0 propn} asserts that if $\gamma_v$ is injective for all primes $v\in S$, then, $\op{Sel}_p(E/L)=0$. For each prime $v\in \Sigma_1$, it follows from the Lemma \ref{gamma_v injective when v splits} that the map $\gamma_v$ is injective. For each prime $v\in \Sigma_2$ such that $v\nmid p$, it follows from Lemma \ref{gamma_v is injective for v does not divide p} that $\gamma_v$ is injective. On the other hand, for $v|p$, it follows from Lemma \ref{potential v|p lemma} that $\gamma_v$ is injective.
\end{proof}
Assuming that $K$ contains $\mu_p$, we shall consider Kummer extensions $L=K(\alpha^{\frac{1}{p}})$ for $\alpha\in K^*/(K^*)^p$. The following is a well known criterion for the ramification and splitting behavior of primes in such Kummer extensions. Given a prime $v\in \Sigma_K$, let $v(\alpha)$ denote $v_{K_v}(\alpha)$ by abuse of notation.

\begin{proposition}\label{ramification in kummer extensions Gras prop}
    Let $L/K$ be as above and $v\in \Sigma_K$ and $i_v: K\hookrightarrow K_v$ denote the natural embedding. Then the following assertions hold. 
    \begin{enumerate}
        \item Assume that $v\nmid p$. Then, $v$ is unramified in $L$ if and only if $v(\alpha)\equiv 0\pmod{p} $.
        \item Assume that $v|p$. Then, $v$ is unramified in $L$ if and only if the congruence
        \[\frac{\alpha}{x_v^p}\equiv 1\pmod{\pi_v^{p e_v}}\] has a solution for $x_v\in K_v^\times$, where $e_v$ is the ramification index of $v$ in $K/\Q(\mu_p)$.
        \item The prime $v$ splits in $L$ if and only if $i_v(\alpha)\in (K_v^\times)^p$.
    \end{enumerate}
    
\end{proposition}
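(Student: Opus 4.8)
The plan is to reduce everything to the local field $K_v$ and analyze the extension $K_v(\alpha^{1/p})/K_v$ (noting $L\otimes_K K_v$ is a product of copies of this field, so ramification and splitting of $v$ in $L$ are read off from it). Throughout I use that $\mu_p\subseteq K$, so $K_v(\alpha^{1/p})/K_v$ is either trivial or a cyclic degree-$p$ Kummer extension, and Kummer theory identifies it via the class of $\alpha$ in $K_v^\times/(K_v^\times)^p$. Part (3) is then immediate: $v$ splits in $L$ exactly when $K_v(\alpha^{1/p})=K_v$, i.e. when $\alpha$ is already a $p$-th power in $K_v^\times$, which is the stated condition $i_v(\alpha)\in (K_v^\times)^p$.

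For part (1), with $v\nmid p$ the extension $K_v(\alpha^{1/p})/K_v$ is tamely ramified, and for a tame Kummer extension of degree dividing $p$ the ramification is governed solely by the valuation of $\alpha$ modulo $p$. Concretely, write $\alpha=\pi_v^{v(\alpha)}u$ with $u$ a unit; if $p\mid v(\alpha)$ then after adjusting $\alpha$ by a $p$-th power we may assume $v(\alpha)=0$, and $K_v(u^{1/p})/K_v$ is unramified since adjoining a $p$-th root of a unit in residue characteristic $\neq p$ is unramified (it is contained in the extension obtained by adjoining $p$-th roots of all units, which is unramified). Conversely, if $p\nmid v(\alpha)$ then $e(K_v(\alpha^{1/p})/K_v)$ is divisible by $p/\gcd(p,v(\alpha))=p$, so $v$ ramifies. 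This gives the equivalence in (1).

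For part (2), with $v\mid p$ the extension is potentially wildly ramified and the valuation criterion fails; instead the relevant invariant is how close $\alpha$ is to a $p$-th power in the $\pi_v$-adic topology. The standard fact (see e.g. Gras's book on class field theory, or a direct Newton-polygon/Hensel argument) is that $K_v(\alpha^{1/p})/K_v$ is unramified if and only if $\alpha$ lies in $(K_v^\times)^p \cdot U_v^{(pe_v)}$, where $U_v^{(m)}=1+\pi_v^m\cO_{K_v}$ and $e_v$ is the ramification index of $K_v/\Q_p(\mu_p)$; equivalently, there exists $x_v\in K_v^\times$ with $\alpha/x_v^p\equiv 1\pmod{\pi_v^{p e_v}}$. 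The point is that $pe_v = e(K_v/\Q_p)\cdot\frac{e_v\cdot e(\Q_p(\mu_p)/\Q_p)}{\cdots}$ is exactly $v(p)+$ the break in the filtration on $U_v/(U_v)^p$ beyond which all units are $p$-th powers up to an unramified twist, so membership in that last step of the filtration is precisely the condition for the associated degree-$p$ extension to be unramified. I would cite \cite{navicius} or a standard reference (Gras, \emph{Class Field Theory}, Ch. I or II) for this local computation rather than reproduce it.

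The main obstacle is the wild case, part (2): getting the exponent $pe_v$ exactly right requires care with the normalization of $e_v$ (ramification over $\Q_p(\mu_p)$ rather than over $\Q_p$) and with the structure of the unit filtration $U_v^{(m)}/(U_v^{(m)})^p$ near the jump. Parts (1) and (3) are essentially formal consequences of Kummer theory and tame ramification theory and should be dispatched quickly; I expect the write-up to consist of a one-line proof of (3), a short paragraph for (1), and a citation (or a compact filtration argument) for (2).
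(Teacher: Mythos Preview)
Your proposal is correct; the paper's own proof consists solely of a citation to Gras, \emph{Class Field Theory} (Theorem 6.3), so your sketch of parts (1) and (3) together with the intended citation for part (2) actually supplies more detail than the paper does. In effect you have arrived at the same endpoint (the reference to Gras) while additionally outlining the standard tame/Kummer arguments that the paper leaves implicit in that citation.
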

\begin{proof}
    The above result is \cite[Theorem 6.3]{GrasCFT}. 
\end{proof}
Let us consider the special case where $K:=\Q(\mu_p)$.
\begin{lemma}\label{t <=1 lemma}
    Suppose that $K=\Q(\mu_p)$ and let $v$ be the unique prime of $K$ that lies above $p$. Let $m$ be a rational integer such that $m\notin (K^\times)^p$. Setting $L:=K(m^{\frac{1}{p}})$, we make the following assumptions. 
    \begin{enumerate}
         \item Suppose that $m^{p-1}\equiv 1\mod{p^2}$. Then, $v$ splits in $L$.
        \item Suppose that $p\nmid m$, then, $t_v\leq 1$. 
    \end{enumerate}
\end{lemma}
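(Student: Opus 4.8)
The plan is to derive both parts from Proposition~\ref{ramification in kummer extensions Gras prop} together with explicit computations in the local field $K_v$, which is isomorphic to $\Qp(\mu_p)$; recall that $\pi_v = \zeta_p - 1$ is a uniformizer and $v_{K_v}(p) = p-1$. Since the hypothesis of either part forces $p\nmid m$, I write $m = \omega\,m_1$ with $\omega\in\mu_{p-1}$ the Teichm\"uller representative of $m\bmod p$ and $m_1 = 1 + p\mu$, $\mu\in\Zp$. As $\gcd(p,p-1)=1$, the $p$-th power map is a bijection of $\mu_{p-1}$, so $\omega\in(\Qp^\times)^p$; and as the $p$-adic logarithm gives an isomorphism $1+p\Zp\xrightarrow{\sim}p\Zp$ (here $p$ is odd), $(1+p\Zp)^p = 1+p^2\Zp$. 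For part (1): by Proposition~\ref{ramification in kummer extensions Gras prop}(3) it suffices to show $m\in(K_v^\times)^p$, and I will in fact show $m\in(\Qp^\times)^p$. Since $m^{p-1} = m_1^{p-1}\equiv 1-p\mu\pmod{p^2}$, the hypothesis forces $p\mid\mu$, so $m_1\in 1+p^2\Zp = (1+p\Zp)^p$; together with $\omega\in(\Qp^\times)^p$ this gives $m=\omega m_1\in(\Qp^\times)^p\subseteq(K_v^\times)^p$, whence $v$ splits in $L$.

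For part (2), since $\omega\in(K_v^\times)^p$ we have $L_w = K_v(m_1^{1/p})$. If $p\mid\mu$ then $m_1\in(\Qp^\times)^p$, so $L_w=K_v$, the prime $v$ splits, and $t_v=-1\leq 1$. So assume $\mu\in\Zp^\times$; the claim is that $L_w/K_v$ is totally ramified of degree $p$ with $t_v=1$. Put $\beta:=m_1^{1/p}\neq 1$ and $\gamma:=\beta-1\neq 0$; since $\beta^p=m_1\equiv 1$ in the residue field of $L_w$ (of characteristic $p$), injectivity of Frobenius forces $\beta\equiv 1$ there, i.e.\ $\gamma\in\mathfrak{m}_{L_w}$, so $n:=v_L(\gamma)\geq 1$, where $v_L$ denotes the normalized valuation of $L_w$. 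I would then expand
\[ p\mu \;=\; \beta^p-1 \;=\; \gamma^p + \sum_{k=1}^{p-1}\binom{p}{k}\gamma^k \]
and compare valuations. If $e(L_w/K_v)=1$ then $v_L(p\mu)=p-1$ while every term on the right has $v_L\geq p$, a contradiction; hence $L_w/K_v$ is totally (wildly) ramified of degree $p$, so $v_L(p)=p(p-1)$ and $v_L(\zeta_p-1)=p$. Among the terms with $1\leq k\leq p-1$ the one with $k=1$ uniquely has least valuation $p(p-1)+n$, whereas $v_L(\gamma^p)=pn$; matching $\min\{pn,\,p(p-1)+n\} = v_L(p\mu) = p(p-1)$ (the case $n=p$ being impossible, as then the right side would have valuation exceeding $p(p-1)$) forces $n=p-1$. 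Consequently $\pi_L:=(\zeta_p-1)/\gamma$ has $v_L(\pi_L)=1$ and is a uniformizer of $L_w$. Taking the generator $\sigma$ of $\op{Gal}(L_w/K_v)$ with $\sigma(\beta)=\zeta_p\beta$ gives $\sigma(\gamma)=(\zeta_p-1)+\zeta_p\gamma$, and a short simplification yields
\[ \sigma(\pi_L)-\pi_L \;=\; \frac{-(\zeta_p-1)^2\,\beta}{\gamma\bigl((\zeta_p-1)+\zeta_p\gamma\bigr)} , \]
which has valuation $2p-\bigl((p-1)+(p-1)\bigr)=2$, using $v_L\bigl((\zeta_p-1)+\zeta_p\gamma\bigr)=\min\{p,p-1\}=p-1$. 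Hence $t_v = v_L(\sigma(\pi_L)-\pi_L)-1 = 1$.

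The routine content is part (1) together with the split case of (2); the crux is the ramified case of (2). The two delicate points there are the valuation bookkeeping that pins down $v_L(\beta-1)=p-1$ exactly, and the recognition that $\beta$ is a unit, not a uniformizer, so that $t_v$ must be read off from the honest uniformizer $\pi_L = (\zeta_p-1)/(\beta-1)$ rather than from $\beta$ directly: computing naively would give $v_L(\sigma(\beta)-\beta)-1 = v_L((\zeta_p-1)\beta)-1 = p-1$, the wrong value.
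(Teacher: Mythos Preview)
Your proof is correct. Part~(1) matches the paper's argument exactly, only with more detail supplied for why $m^{p-1}\equiv 1\pmod{p^2}$ forces $m\in(\Qp^\times)^p$.

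For part~(2) your approach differs genuinely from the paper's. The paper simply reduces (via part~(1)) to the case $p\,\|\,(m^{p-1}-1)$ and then invokes \cite[Theorem~5.6]{Viviani}, which computes the ramification filtration of radical extensions $\Q(\mu_p,m^{1/p})/\Q(\mu_p)$. You instead carry out a self-contained local computation: after showing $L_w/K_v$ must be totally ramified via the binomial expansion of $\beta^p-1$, you pin down $v_L(\beta-1)=p-1$, produce the explicit uniformizer $\pi_L=(\zeta_p-1)/(\beta-1)$, and compute $v_L(\sigma(\pi_L)-\pi_L)=2$ directly. Your route has the advantage of being entirely elementary and avoiding an external reference; the paper's citation is of course shorter on the page and situates the computation within a general framework for radical extensions. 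Your closing remark about why one must work with $\pi_L$ rather than $\beta$ is a nice diagnostic point that the citation-based proof obscures.
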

\begin{proof}
    The condition that $m^{p-1}\equiv 1\mod{p^2}$ implies that $m\in (\Q_p^\times)^p$. In particular, this implies that $m\in (K_v^\times)^p$ and thus $v$ splits in $L$ by Proposition \ref{ramification in kummer extensions Gras prop}. This proves part (1). For Part (2), assume without loss of generality that $p||(m^{p-1}-1)$. The result then follows from \cite[Theorem 5.6]{Viviani}. 
\end{proof}

\begin{theorem}\label{kummer extensions theorem}
    Let $p$ be an odd prime number and $K=\Q(\mu_p)$. Assume that $\op{Sel}_p(E/K)=0$ and $m$ be a rational integer such that $m\notin (K^\times)^p$. Let $Q_1$ be the set of primes $v\nmid p$ at which $E$ has good reduction and $\widetilde{E}(k_v)[p]=0$. Let $Q_2$ be the set of primes $v\nmid p$ at which $E$ has bad reduction. Then, assume that the following conditions are satisfied.
    \begin{enumerate}
        \item The only primes $v\nmid p$ for which $v(m)\not\equiv 0\mod{p}$ are contained in $Q_1$.
        \item For each prime $v\in Q_2$, we have that $i_v(m)\in (K_v^\times)^p$. 
        \item For each prime $v \mid p$ at which $E$ has good ordinary reduction and $\widetilde{E}(k_v)[p]\neq 0$, we have that $i_v(m)\in (K_v^\times)^p$. 
        \item For each prime $v\mid p$ at which $E$ has supersingular reduction, we have that $v(m)\equiv 0\mod{p}$.
        \item For each prime $v\mid p$ of bad reduction for $E$, we have that $i_v(m)\in (K_v^\times)^p$.
    \end{enumerate}
    Then, we have that $\op{Sel}_p(E/K(m^{1/p}))=0$. 
\end{theorem}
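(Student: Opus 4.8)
The plan is to apply Theorem \ref{main theorem of section 3} with $L = K(m^{1/p})$, so the entire task reduces to verifying the three hypotheses of that theorem, using Proposition \ref{ramification in kummer extensions Gras prop} and Lemma \ref{t <=1 lemma} to translate statements about splitting/ramification in the Kummer extension into the explicit congruence conditions (1)--(5) on $m$. First I would set $L := K(m^{1/p})$ and observe that $L/K$ has degree $p$ since $m \notin (K^\times)^p$; I would let $S = \Sigma_1 \cup \Sigma_2 \cup \Sigma_p$ be the set of primes as in Theorem \ref{main theorem of section 3}, where $\Sigma_1$ consists of primes of bad reduction, $\Sigma_p$ of primes above $p$, and $\Sigma_2 = S \setminus (\Sigma_1 \cup \Sigma_p)$ of good-reduction primes that ramify in $L$ but do not divide $p$.

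Next, I would check hypothesis (1) of Theorem \ref{main theorem of section 3}: every $v \in \Sigma_1$ (bad reduction, $v \nmid p$) must split completely in $L$. By Proposition \ref{ramification in kummer extensions Gras prop}(3), this is equivalent to $i_v(m) \in (K_v^\times)^p$, which is precisely hypothesis (2) of the present theorem (noting $\Sigma_1 \subseteq Q_2$, or that the bad primes dividing $p$ are handled separately by hypothesis (5)). For hypothesis (2) of Theorem \ref{main theorem of section 3}, I must show $\widetilde{E}(k_v)[p] = 0$ for every $v \in \Sigma_2$. A prime $v \nmid p$ of good reduction lies in $\Sigma_2$ only if it ramifies in $L$; by Proposition \ref{ramification in kummer extensions Gras prop}(1) ramification at $v \nmid p$ forces $v(m) \not\equiv 0 \pmod p$, and then hypothesis (1) of the present theorem places $v \in Q_1$, which by definition gives $\widetilde{E}(k_v)[p] = 0$. (Primes $v \nmid p$ of good reduction with $v(m) \equiv 0 \pmod p$ are unramified in $L$, hence not in $S$ at all, so they need no attention.)

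The remaining work is hypothesis (3) of Theorem \ref{main theorem of section 3}, concerning $v \in \Sigma_p$ not completely split in $L$; I would split into cases according to reduction type. If $E$ has good ordinary reduction at $v \mid p$ with $\widetilde{E}(k_v)[p] = 0$, take $K' = K$ and apply the first bullet directly. If $E$ has good ordinary reduction at $v \mid p$ but $\widetilde{E}(k_v)[p] \neq 0$, then hypothesis (3) of the present theorem gives $i_v(m) \in (K_v^\times)^p$, so $v$ splits completely by Proposition \ref{ramification in kummer extensions Gras prop}(3) and there is nothing to check. If $E$ has supersingular reduction at $v \mid p$, then hypothesis (4) gives $v(m) \equiv 0 \pmod p$; here I would invoke Lemma \ref{t <=1 lemma}(2), which applies in the case $K = \Q(\mu_p)$ (so $v$ is the unique prime above $p$): either $m$ has already been arranged so that $v$ splits, or the hypothesis $p \nmid m$ yields $t_v \leq 1$, and one takes $K' = K$ with the supersingular bullet of hypothesis (3) of Theorem \ref{main theorem of section 3}. (One should note that $v(m) \equiv 0 \pmod p$ together with $m$ a rational integer and $v$ the prime of $\Q(\mu_p)$ above $p$, which has ramification index $p-1$ over $\Q$, forces the $p$-adic valuation of $m$ to be $0$, i.e.\ $p \nmid m$, so the hypothesis of Lemma \ref{t <=1 lemma}(2) is automatic.) Finally, if $E$ has bad reduction at $v \mid p$, hypothesis (5) gives $i_v(m) \in (K_v^\times)^p$, so $v$ splits completely and again nothing is required. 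Having verified all three hypotheses of Theorem \ref{main theorem of section 3}, we conclude $\op{Sel}_p(E/K(m^{1/p})) = 0$.

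The main obstacle, and the step deserving the most care, is the supersingular case: one must correctly reconcile the ramification invariant $t_v$ with Lemma \ref{t <=1 lemma} and make sure the condition $v(m) \equiv 0 \pmod p$ genuinely implies $p \nmid m$ in $\Q(\mu_p)$ (using $e(v \mid p) = p - 1$), and that in the ramified-in-$L$ subcase Proposition \ref{supersingular redn gamma_v injective propn} applies with $t_v \leq 1$ actually meaning $t_v = 1$ — which holds since $t_v \geq 1$ whenever $L_w/K_v$ is (wildly) ramified, per the discussion around \eqref{defn of t}. Everything else is a routine application of the Kummer-theoretic splitting criteria.
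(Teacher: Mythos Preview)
Your approach is exactly the paper's: reduce to Theorem~\ref{main theorem of section 3} and verify its three hypotheses via Proposition~\ref{ramification in kummer extensions Gras prop} and Lemma~\ref{t <=1 lemma}. The paper's own proof is a single sentence pointing to these ingredients, so your case-by-case verification goes well beyond it and is essentially correct.

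There is one slip in the supersingular paragraph. You assert that $v(m)\equiv 0\pmod{p}$, together with $e(v\mid p)=p-1$, forces $\operatorname{ord}_p(m)=0$. But $v(m)=(p-1)\operatorname{ord}_p(m)$, and since $\gcd(p-1,p)=1$ the congruence $(p-1)n\equiv 0\pmod{p}$ only yields $n\equiv 0\pmod{p}$, not $n=0$; for instance $m=2\cdot p^{p}$ satisfies $v(m)\equiv 0\pmod{p}$ and $m\notin (K^\times)^p$, yet $p\mid m$. The repair is immediate and costs nothing: the extension $L=K(m^{1/p})$ and each of the conditions (1)--(5) depend only on the class of $m$ in $K^\times/(K^\times)^p$, and hypothesis~(4) says precisely that $\operatorname{ord}_p(m)$ is a multiple of $p$, so one may replace $m$ by $m/p^{\operatorname{ord}_p(m)}\in\Z$ at the outset and assume $p\nmid m$. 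After this adjustment Lemma~\ref{t <=1 lemma}(2) applies as you intended, and your observation that $t_v\le 1$ combined with $t_v\ge 1$ in the ramified case (from the discussion after \eqref{defn of t}) gives $t_v=1$ is correct.
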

\begin{proof}
    Setting $L:=K(m^{1/p})$ it suffices to show that the conditions of Theorem \ref{main theorem of section 3} are satisfied. This is a direct consequence of Proposition \ref{ramification in kummer extensions Gras prop} and Lemma \ref{t <=1 lemma}.
\end{proof}

\section{Elliptic curves with rational $3$-isogeny : Stability results}\label{sec 4}
\par We illustrate our results from the previous section to the case when $p=3$. In this setting, we may consider families of elliptic curves $E_{/\Q}$ that have a rational $3$-isogeny. More specifically, we consider the family of elliptic curves given by $E:=E_a: y^2=x^3+a$, where $a$ is a cubefree integer. 
Let $\varphi_a: E\rightarrow E'$ be a rational $3$-isogeny, where $E':y^2=x^3-27a$. Set $\zeta:=\frac{-1+\sqrt{-3}}{2}$ and $\varpi:=1-\zeta$. We have that $\varpi=-\sqrt{-3}\zeta$, and hence, $\varpi^6=-27$. Thus, over $K:=\Q(\mu_3)$, the curves $E'$ and $E$ are isomorphic via $\theta_\varpi: E'\xrightarrow{\sim} E$, defined by \[\theta_\varpi(x,y) :=\left( \frac{x}{\varpi^2},  \frac{y}{\varpi^3}\right).\] Hence, $E$ is $3$-isogenous to itself over $K$. Let $\varphi:=\theta_{\varpi}\circ \varphi_a:E\rightarrow E$ be the isogeny which is explicitly given as follows \[\varphi(x,y)=\left( \frac{x^3+4a}{\varpi^2x^2}, \frac{y(x^3-8a)}{\varpi^3x^3} \right).\]
The kernel of $\varphi$ is the order $3$ subgroup $\{O, (0,\sqrt{a}), (0, -\sqrt{a})\} \subset E(\bar{K})$, cf. \cite[p. 3]{top}. Denote by $E(K)[\varphi]$ the kernel of the map $\varphi: E(K)\rightarrow E(K)$. Since $\varphi$ is self-dual, we find that $E(K)[\varphi^2]=E(K)[3]$, and thus, $E(K)[\varphi]$ is contained in $E(K)[3]$.

\begin{lemma}
    With respect to notation above, the following are equivalent
    \begin{enumerate}
        \item $a \notin (K^\times)^2$,
        \item $E(K)[\varphi]=0$,
        \item $E(K)[3]=0$.
    \end{enumerate}
\end{lemma}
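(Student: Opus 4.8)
The plan is to prove the equivalences by first disposing of $(1)\Leftrightarrow(2)$, which is elementary, then noting that $(3)\Rightarrow(2)$ is formal, so that the entire lemma reduces to the single implication $(1)\Rightarrow(3)$. For $(1)\Leftrightarrow(2)$: since $\op{ker}\varphi=\{O,(0,\sqrt a),(0,-\sqrt a)\}$, a nontrivial element of $E(K)[\varphi]$ is exactly one of the points $(0,\pm\sqrt a)$ that happens to be $K$-rational, and (as $a\neq 0$) this occurs if and only if $\sqrt a\in K$, i.e. $a\in(K^\times)^2$. Concretely, $(2)\Rightarrow(1)$ is the contrapositive --- if $a=b^2$ with $b\in K^\times$ then $(0,b)$ is a nontrivial point of $E(K)[\varphi]$ --- while $(1)\Rightarrow(2)$ just reads the $y$-coordinate off a hypothetical nontrivial $\varphi$-torsion $K$-point. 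The implication $(3)\Rightarrow(2)$ is immediate from the inclusion $E(K)[\varphi]\subseteq E(K)[3]$ already noted in the text.

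It then remains to prove $(1)\Rightarrow(3)$, which I would establish in contrapositive form: if $E(K)[3]\neq 0$ then $a\in(K^\times)^2$. Fix a nontrivial $P\in E(K)[3]$. Because $\varphi$ is defined over $K$, the point $\varphi(P)$ again lies in $E(K)$; and because $\varphi$ is self-dual we have $\varphi^2=[3]$, equivalently $E(K)[\varphi^2]=E(K)[3]$ as recorded above, so $\varphi(\varphi(P))=[3]P=O$. Now split into cases: if $\varphi(P)=O$ then $P$ itself is a nontrivial element of $E(K)[\varphi]$; if $\varphi(P)\neq O$ then $\varphi(P)$ is a nontrivial element of $E(K)[\varphi]$. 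Either way $E(K)[\varphi]\neq 0$, and invoking the already-proven implication $(2)\Rightarrow(1)$ forces $a\in(K^\times)^2$, completing the cycle $(1)\Rightarrow(3)\Rightarrow(2)\Rightarrow(1)$.

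For context I would also mention the more structural route to $(1)\Rightarrow(3)$: the $\op{G}_K$-module $E[3]$ fits into an exact sequence $0\to\op{ker}\varphi\to E[3]\xrightarrow{\varphi}\op{ker}\varphi\to 0$, the quotient being $\varphi(E[3])$, which equals $\op{ker}\varphi$ by an order count once $\varphi^2=[3]$ is known; here $\op{ker}\varphi$ is the $\F_3$-line on which $\op{G}_K$ acts through the quadratic character $\chi$ of $K(\sqrt a)/K$, and the quotient carries the same character since their product is $\det E[3]$, the mod-$3$ cyclotomic character, which is trivial on $\op{G}_K$ because $\mu_3\subseteq K$. So if $a\notin(K^\times)^2$, i.e. $\chi\neq 1$, left-exactness of $(-)^{\op{G}_K}$ gives $E(K)[3]=E[3]^{\op{G}_K}=0$. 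I will present the first, self-contained argument as the main proof. There is no genuine obstacle here: the only input requiring care is the self-duality identity $\varphi^2=[3]$ (equivalently $\op{ker}\varphi^2=E[3]$), which the surrounding discussion already supplies, and granting it each implication is a one-line Galois-descent argument with no computation.
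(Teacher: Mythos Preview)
Your proof is correct and essentially the same as the paper's. The paper proves $(1)\Leftrightarrow(2)$ exactly as you do, and for $(2)\Leftrightarrow(3)$ it takes $\op{G}_K$-invariants of the short exact sequence $0\to E[\varphi]\to E[3]\xrightarrow{\varphi} E[\varphi]\to 0$ to obtain $0\to E(K)[\varphi]\to E(K)[3]\to E(K)[\varphi]$; your element-chasing (``either $P$ or $\varphi(P)$ is a nontrivial $\varphi$-torsion $K$-point'') is precisely this left-exact sequence read pointwise, and your ``structural route'' is literally the paper's argument with the extra character computation. One small slip: in the last step you should be invoking the contrapositive of $(1)\Rightarrow(2)$, not $(2)\Rightarrow(1)$, to pass from $E(K)[\varphi]\neq 0$ to $a\in(K^\times)^2$; since you have already established both directions this does not affect correctness.
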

\begin{proof}
    \par Note that the point $(0, \sqrt{a})$ of $E(\bar{K})$ has order $3$ and is the generator of $E[\varphi]$. Thus, $a$ is not a square in $K^\times$ if and only if $(0, \sqrt{a})$ is not contained in $E(K)$. This shows that 
    \[a\notin (K^\times)^2\Leftrightarrow E(K)[\varphi]=0,\]i.e., conditions (1) and (2) are equivalent.
    
\par There is a natural short exact sequence \[0 \rightarrow E[\varphi] \rightarrow E[3] \xrightarrow{\varphi} E[\varphi] \rightarrow 0.\]
    Since $\varphi$ is defined over $K$, taking Galois invariants gives the exact sequence
    \[0 \rightarrow E(K)[\varphi] \rightarrow E(K)[3] \xrightarrow{\varphi} E(K)[\varphi].\]
    Thus, one has $E(K)[\varphi]=0$ if and only if $E(K)[3]=0$. This proves that conditions (2) and (3) are equivalent, completing the proof. 
\end{proof}
\subsection{Results of Mazur and Rubin on the parity of ranks}\label{sec 4.1}
\par Note that $E$ is isogenous to its quadratic twist by $-3$ over $\Q$ and hence $\op{rank} E(K)=2\op{rank} E(\Q)$. In particular, the rank of $E(K)$ is even. It is however still natural to study the parity of the rank of $E(K)$ as an $\cO_K$-module. We briefly discuss parity results of Mazur and Rubin \cite{MazurRubinAnnals} for cubic twist families.

\par Let $m$ be a cubefree integer. Consider the cubic twists $E_{(m)}$ and $E_{(m^2)}$ of $E$, defined as follows
\[\begin{split}
    E_{(m)}:=E_{m^2a}:y^2=x^3+m^2 a;\\
    E_{(m^2)}:=E_{m^4a}:y^2=x^3+m^4 a.
\end{split}\]
In the next subsection, it is proven that for many values of $m$, the groups $E_{(m)}(K)$ and $E_{(m^2)}(K)$ both have rank $0$. Setting $L:=K(\sqrt[3]{m})$, we take $\chi=\chi_m:\op{Gal}(L/K)\rightarrow \mu_3$ to be the cubic character of order $3$ defined as follows
\[\chi(\sigma):=\frac{\sigma (\sqrt[3]{m})}{\sqrt[3]{m}},\] for $\sigma\in \op{Gal}(L/K)$. Since $\op{Aut}_{\bar{K}}(E)\simeq \cO_K^\times=\mu_6$, we find that the action of $\op{G}_K$ on $\op{Aut}_{\bar{K}}(E)$ is trivial. We view $\mu_3$ as a subgroup of $\op{Aut}_{\bar{K}}(E)$ and thus view $\chi$ as a cohomology class as follows \[\chi\in \op{Hom}\left(\op{G}_K, \op{Aut}_{\bar{K}}(E)\right)=H^1\left(\op{G}_K, \op{Aut}_{\bar{K}}(E)\right).\]
The elliptic curve $E_{(m)}$ (resp. $E_{(m^2)}$) is the twist $E_\chi$ (resp. $E_{\chi^2}$). In other words, there is an isomorphism $\phi: E_{\chi}\xrightarrow{\sim} E$ over $\bar{K}$ such that $\phi^\sigma=\chi(\sigma) \phi$, cf. \cite[section X.2]{silverman}. For $P\in E_{\chi}(\bar{K})$, we have that
\begin{equation}\label{relation Echi}\sigma(\phi(P))=\chi(\sigma)\phi(\sigma(P)).\end{equation}
Given a $\op{G}_K$-module $M$, take 
\[M^\chi:=\{m\in M\mid \sigma(m)=\chi(\sigma) m\text{ for all }\sigma\in \op{G}_K\}.\]In view of \eqref{relation Echi}, we have the identification $E_\chi(K)=E(L)^\chi$, and therefore, \[E(L) \simeq E(K)\oplus E(L)^\chi\oplus E(L)^{\chi^2}\simeq E(K)\oplus E_\chi (K)\oplus E_{\chi^2}(K).\]
From \eqref{compact selmer exact sequence}, one obtains the short exact sequence
\[0\rightarrow E_\chi(K)\otimes \Z_3\rightarrow \mathfrak{S}_3(E/L)^\chi\rightarrow \op{T}_3\left(\Sh(E/L)[3^\infty]\right)^\chi\rightarrow 0,\] and thus,
\[\op{rank} \left(\mathfrak{S}_3(E/L)^\chi\right)=\op{rank} E_\chi(K)\] provided $\Sh(E/L)[3^\infty]$ is finite.

Let $S$ be the set of primes of $K$ consisting of the prime above $3$, the primes at which $E$ has bad reduction and the primes that ramify in $L$. For each prime $v\in S$, there is a locally defined invariant $\delta_v\in \Z/2\Z$, cf. \cite[Definition 4.5]{MazurRubinAnnals}. Let $\Z_3[\chi]$ be the extension of $\Z_3$ generated by the values of $\chi$. The following result follows from Theorem A of \emph{loc. cit.} 

\begin{theorem}[Mazur--Rubin]\label{thm 5.1}
    With respect to notation above, we have that 
    \[\op{rank}_{\Z_3}\left( \mathfrak{S}_3(E/K)\right)-\op{rank}_{\Z_3[\chi]}\left( \mathfrak{S}_3(E/L)^\chi\right)\equiv \sum_{v\in S} \delta_v\pmod{2}. \]
\end{theorem}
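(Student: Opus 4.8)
The plan is to obtain the formula as a direct translation of Theorem~A of \cite{MazurRubinAnnals} into the language of compact Selmer groups. Recall the setup of that theorem: given the self-dual $\op{G}_K$-module $\op{T}_3(E)$ and a character $\psi$ of $\op{G}_K$ of exact order $3$, Mazur and Rubin compare the ``$3$-Selmer rank'' of $E$ with that of its twist $E_\psi$, where the Selmer rank is by definition the $\Z_3$-corank of $\op{Sel}_{3^\infty}$, equivalently --- via \eqref{corank equal rank} --- the $\Z_3$-rank of the compact Selmer group $\mathfrak{S}_3$. I would apply this with $\psi = \chi = \chi_m$; this is legitimate because $\chi$ has order exactly $3$, which is guaranteed by the hypothesis $m \notin (K^\times)^3$ (so that $\chi$ is a nontrivial element of $\op{Hom}(\op{G}_K, \op{Aut}_{\bar{K}}(E))$ and the twist $E_\chi = E_{(m)}$ is a genuine cubic twist of $E$). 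Theorem~A of \emph{loc.\ cit.} then gives a congruence modulo $2$ between $\op{rank}_{\Z_3}\mathfrak{S}_3(E/K) - \op{rank}_{\Z_3[\chi]}\mathfrak{S}_3(E_\chi/K)$ and a locally defined sum.

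Next I would identify $\mathfrak{S}_3(E_\chi/K)$ with $\mathfrak{S}_3(E/L)^\chi$. This comes from the decomposition $E(L) \simeq E(K) \oplus E_\chi(K) \oplus E_{\chi^2}(K)$ recorded above, together with its analogue for the $3$-primary Shafarevich--Tate groups, and hence --- by taking $\chi$-isotypic components in the exact sequences \eqref{compact selmer exact sequence} and \eqref{comparison of compact and p-primary selmer groups} over $L$ --- the decomposition $\mathfrak{S}_3(E/L) \simeq \mathfrak{S}_3(E/K) \oplus \mathfrak{S}_3(E/L)^\chi \oplus \mathfrak{S}_3(E/L)^{\chi^2}$, in which $\mathfrak{S}_3(E/L)^\chi$ is precisely the $\Z_3[\chi]$-module built from $E_\chi(K)$ and $\op{T}_3(\Sh(E_\chi/K))$. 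Consequently $\op{rank}_{\Z_3[\chi]}\mathfrak{S}_3(E/L)^\chi$ is exactly the Selmer rank of $E_\chi$ that enters Theorem~A.

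I would then check that the local invariants vanish outside $S$, so that the sum in Theorem~A, a priori over all of $\Sigma_K$, reduces to $\sum_{v \in S}\delta_v$. At a place $v \nmid 3$ at which $E$ has good reduction and which is unramified in $L$ --- that is, at a place $v \notin S$ --- the character $\chi$ is unramified at $v$, so the twisted local condition at $v$ still coincides with the unramified subspace $H^1_{\op{nr}}(K_v, E[3])$, which is also the local condition defining $\op{Sel}_3(E/K)$ there; the comparison term of Definition~4.5 of \emph{loc.\ cit.} is therefore trivial and $\delta_v = 0$. Combining this collapse with the first two steps yields the stated congruence.

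The main obstacle is not mathematical depth but careful bookkeeping of normalizations. One must reconcile the fact that $\Z_3[\chi] = \Z_3[\mu_3]$ is ramified of degree $2$ over $\Z_3$ --- so that the $\Z_3[\chi]$-rank of an isotypic piece is half the corresponding $\Z_3$-rank --- with the precise meaning of ``$3$-Selmer rank'' in \cite{MazurRubinAnnals}, and verify that with the chosen conventions the two quantities $\op{rank}_{\Z_3}\mathfrak{S}_3(E/K)$ and $\op{rank}_{\Z_3[\chi]}\mathfrak{S}_3(E/L)^\chi$ match the objects in Theorem~A on the nose. One must also confirm that the finite set of bad places in the invoked version of Theorem~A is exactly the set $S$ defined here (the prime above $3$, the primes of bad reduction of $E$, and the primes ramifying in $L$), and that the $\delta_v$ of Definition~4.5 of \emph{loc.\ cit.} are literally the invariants appearing in the statement.
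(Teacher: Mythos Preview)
Your proposal is correct and matches the paper's approach exactly: the paper does not give an independent proof but simply states that the result follows from Theorem~A of \cite{MazurRubinAnnals}. Your write-up is in fact more detailed than the paper's, spelling out the identification $\mathfrak{S}_3(E_\chi/K)\simeq \mathfrak{S}_3(E/L)^\chi$ and the vanishing of $\delta_v$ for $v\notin S$ that the paper leaves implicit.
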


Note that the value of $\op{rank}_{\Z_3} \left( \mathfrak{S}_3(E/K)\right)$ is expected to be even, and thus the above result predicts that 
\begin{equation}
    \op{rank}E_{(m)}(K), \op{rank}E_{(m^2)}(K)\equiv 2\left(\sum_{v\in S} \delta_v\right)\pmod{4}.
\end{equation}
Since $\op{rank} E_{(m)}(K)=2\op{rank} E_{(m)}(\Q)$, the above relation implies that 
\[\op{rank}E_{(m)}(\Q), \op{rank}E_{(m^2)}(\Q)\equiv \left(\sum_{v\in S} \delta_v\right)\pmod{2}.\]

\subsection{Main results}\label{sec 4.2}
In this section, we prove the main results of this article. As in the previous subsection, let $m$ be a cubefree integer. Assume now that $m$ is coprime to $3$. Set $L_m:=K(\sqrt[3]{m})$, note that $L_m/K$ is a cyclic cubic extension. Notice that the substitution 
\[(x,y)\mapsto (m^{2/3}x, m y )\]gives us that $E$ and $E_{(m)}$ (resp. $E_{(m)}$ and $E_{(m^2)}$) are isomorphic over $\Q(\sqrt[3]{m})$. We find there are isomorphisms of the Selmer groups
\[\op{Sel}_3(E/L_m)\simeq \op{Sel}_3(E_{(m)}/L_m)\simeq \op{Sel}_3(E_{(m^2)}/L_m).\]
Thus in particular, if $\op{Sel}_3(E/L_m)=0$, then $\op{rank} E_{(m)}(K)=0$ and $\op{rank} E_{(m^2)}(K)=0$. 
Let $\p:=\varpi \cO_K$ be the unique prime of $K$ that lies above $(3)$. Note that $E$, $E_{(m)}$ and $E_{(m^2)}$ all have additive reduction at $\p$.

From here on in, denote by $\ell$ a prime divisor of $m$, and $q$ a prime divisor of $a$.
\begin{definition}
    Let $\mathcal{M}_a$ be the set of rational primes $\ell$ such that the following conditions are satisfied
    \begin{itemize}
        \item $\ell\nmid a$, 
        \item $\ell\equiv 1\pmod{6}$, 
        \item $\widetilde{E}(\F_\ell)[3]=0$.
    \end{itemize}
\end{definition}

\begin{theorem}\label{section 4 basic thm}
    With respect to notation above, assume that 
    \begin{enumerate}
        \item $m\equiv 1 \pmod{9}$, 
        \item $\op{Sel}_3(E/K)=0$,
        \item $m$ is only divisible by primes of $\mathcal{M}_a$,
        \item $m$ is a cube modulo $q$ for each prime $q|a$.
    \end{enumerate}
    Then, $\op{Sel}_3(E/L_m)=0$, in particular, $\op{rank}E_{(m)}(K)=0$ and $\op{rank} E_{(m^2)}(K)=0$.
\end{theorem}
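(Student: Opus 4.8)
The plan is to obtain $\op{Sel}_3(E/L_m)=0$ as a direct application of Theorem \ref{kummer extensions theorem}, taken with $p=3$, base field $K=\Q(\mu_3)$, elliptic curve $E=E_a$, and the integer $m$; once this is known, the isomorphisms $\op{Sel}_3(E/L_m)\simeq \op{Sel}_3(E_{(m)}/L_m)\simeq \op{Sel}_3(E_{(m^2)}/L_m)$ recorded above give $\op{rank} E_{(m)}(K)=\op{rank} E_{(m^2)}(K)=0$. Thus the whole task is to verify the five hypotheses of Theorem \ref{kummer extensions theorem}. I would first dispatch the trivial case $m=\pm 1$, where $L_m=K$ and hypothesis $(2)$ already gives the conclusion; for cubefree $m$ with $|m|>1$ one has $m\notin (K^\times)^3$, since $[K:\Q]=2$ is prime to $3$.

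For the prime above $3$, recall (as noted above) that $E$, $E_{(m)}$ and $E_{(m^2)}$ all have additive reduction at the unique prime $\p=\varpi\cO_K$ over $3$. Hence hypotheses $(3)$ and $(4)$ of Theorem \ref{kummer extensions theorem} are vacuous, and only hypothesis $(5)$ needs attention: one must show $i_\p(m)\in (K_\p^\times)^3$. This is precisely where $m\equiv 1\pmod 9$ enters: since then $m^2\equiv 1\pmod 9$, Lemma \ref{t <=1 lemma}$(1)$ (applied with $p=3$) shows that $\p$ splits in $L_m$, so $i_\p(m)\in (K_\p^\times)^3$ by Proposition \ref{ramification in kummer extensions Gras prop}$(3)$.

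The remaining two hypotheses concern primes $v\nmid 3$, and I would check them by passing between primes of $K$ and rational primes. For $(1)$: if $v(m)\not\equiv 0\pmod 3$ then $v$ lies over a rational prime $\ell\mid m$, hence $\ell\in \mathcal{M}_a$, so $\ell\nmid a$ and $\ell\equiv 1\pmod 6$; therefore $\ell\geq 7$, $E$ has good reduction at $v$, and (as $\ell\equiv 1\pmod 3$) $\ell$ splits in $K$ with residue field $k_v\cong \F_\ell$, so that $\widetilde{E}(k_v)[3]=\widetilde{E}(\F_\ell)[3]=0$; thus $v\in Q_1$. For $(2)$: a prime $v\in Q_2$ lies over a rational prime $\ell_0\mid 2a$ with $\ell_0\neq 3$ (the model $y^2=x^3+a$ is minimal away from $2$ and $3$ because $a$ is cubefree, so bad reduction away from $3$ occurs only at $2$ and at primes dividing $a$), and since every prime factor of $m$ lies in $\mathcal{M}_a$, the integer $m$ is coprime to $2a$, hence a unit at $v$. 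If $\ell_0=2$, then $2$ is inert in $K$, $K_v/\Q_2$ is unramified of degree $2$, and the image of the odd integer $m$ in $k_v\cong \F_4$ lies in $\F_2$, so equals $1$; Hensel's lemma then gives $m\in (\cO_{K_v}^\times)^3$. If $\ell_0=q\mid a$ with $q\equiv 2\pmod 3$, then $q$ is inert and $m\in\Q_q\subseteq K_v$ is already a cube in $\Q_q$ because $3\nmid q-1$; if $q\equiv 1\pmod 3$, then $q$ splits in $K$, $K_v=\Q_q$, and $m$ is a cube in $\Q_q$ since it is a cube modulo $q$ by hypothesis $(4)$. In all cases $i_v(m)\in (K_v^\times)^3$, so $(2)$ holds.

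With the hypotheses of Theorem \ref{kummer extensions theorem} verified, one concludes $\op{Sel}_3(E/L_m)=0$, whence the asserted vanishing of $\op{rank} E_{(m)}(K)$ and $\op{rank} E_{(m^2)}(K)$. I expect the main obstacle to be the bookkeeping in the verification of $(1)$ and $(2)$: one must track which rational primes split, remain inert, or ramify in $K$, pin down the reduction type of $E_a$ at $2$ and at the primes dividing $a$, and carry out the local cube computations at those bad primes — in particular, at $2$ one relies on the fact that every odd rational integer is automatically a cube in the unramified quadratic extension of $\Q_2$. None of this is deep, but it is the most error-prone step.
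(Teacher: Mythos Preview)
Your proposal is correct and follows essentially the same route as the paper: the paper verifies the hypotheses of Theorem \ref{main theorem of section 3} directly, while you invoke its Kummer-extension packaging Theorem \ref{kummer extensions theorem}, but the local checks you carry out (splitting at $\p$ via $m\equiv 1\pmod 9$, splitting at bad primes $v\mid 2a$ via local cube computations, and $\widetilde{E}(k_v)[3]=0$ at ramified good primes via $\mathcal{M}_a$) are the same in substance as those in the paper's proof. Your treatment is in fact slightly more careful in separating the sub-cases $q\equiv 1$ versus $q\equiv 2\pmod 3$ and in disposing of the degenerate case $m=\pm 1$.
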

\begin{proof}
    We show that the conditions of Theorem \ref{main theorem of section 3} are indeed satisfied for $L=L_m$. 
    \begin{enumerate}
        \item First, we consider the condition for the primes of $\Sigma_1$. Recall that these are primes $v\nmid 3$ at which $E$ has bad reduction. We must show that such $v$ must be completely split in $L$. By part (3) of Proposition \ref{ramification in kummer extensions Gras prop}, we have that $v$ splits in $L$ if and only if $m\in (K_v^\times)^3$. Let $q$ be the rational prime such that $v|q$. Since $v$ is a prime of bad reduction, it follows that $q|2a$. Consider two cases. First, suppose that $q|a$. Note that $q\nmid m$ and $q\neq 3$. Thus, $m\in (\Z_q^\times)^3$ if and only if $m$ is a cube modulo $q$. Since $m$ is a cube modulo $q$ by assumption, we conclude that $m\in (\Z_q^\times )^3$ and hence, $m\in (K_v^\times)^3$. Thus $v$ is completely split in $L_m$ for all primes $v\in \Sigma_1$. Now consider the other case, namely, $q=2$. In this case, since $m$ is odd, $m\in \Z_2^\times$. However, it is easy to see that $\Z_2^\times=(\Z_2^\times)^3$, and thus, in particular $m\in (K_v^\times)^3$ and thus, $v$ splits in $L$.
        \item Recall that $\Sigma_2$ is the set of primes $v\nmid 3$ of $K$ at which $E$ has good reduction which are ramified in $L$. The condition (2) of Theorem \ref{main theorem of section 3} requires that $\widetilde{E}(k_v)[3]=0$ for all primes $v\in \Sigma_2$. This follows from the condition that $m$ is only divisible by primes of $\mathcal{M}_a$. In greater detail, let $v\in \Sigma_2$ and $\ell$ be the rational prime such that $v|\ell$. Since $v$ is ramified in $L$, it follows that $\ell|m$. Thus, $\ell\in \mathcal{M}_a$, and by definition, $\widetilde{E}(\F_\ell)[3]=0$. We are to show that $\widetilde{E}(k_v)[3]=0$. Since $\ell\in \mathcal{M}_a$, it follows by definition that $\ell\equiv 1\mod{3}$, thus $\ell$ splits in $K$. Thus, $k_v=\F_\ell$, and therefore, $\widetilde{E}(k_v)[3]=0$.
        \item Since $m\equiv 1\mod{9}$, it follows that $m\in (\Z_3^\times)^3$. Thus in particular, it follows that $m\in (K_v^\times)^3$ and thus $v$ splits in $L$ for the prime $v$ of $K$ that lies above $3$. Therefore, the condition (3) of Theorem \ref{main theorem of section 3} is satisfied. 
    \end{enumerate}
    The result thus follows from Theorem \ref{main theorem of section 3}.
\end{proof}

\begin{definition}\label{def of Q_a}
    Let $\mathcal{Q}_a$ be the set of primes $\ell$ such that 
    \begin{enumerate}
       \item $\ell\nmid a$,
        \item $\ell\equiv 1\pmod{18}$, 
        \item $\ell$ is a cube modulo $q$ for each prime $q$ that divides $a$. 
          \item $\widetilde{E}(\F_\ell)[3]=0$.
    \end{enumerate}

\end{definition}

\begin{corollary}\label{Q_a corollary}
    Let $K=\Q(\mu_3)$ and assume that $\op{Sel}_3(E/K)=0$. Let $m$ be an integer that is divisible only by primes of $\mathcal{Q}_a$. Then, we have that $\op{rank}E_{(m)}(K)=0$ and $\op{rank} E_{(m^2)}(K)=0$. 
\end{corollary}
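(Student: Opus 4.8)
The plan is to deduce Corollary \ref{Q_a corollary} from Theorem \ref{section 4 basic thm} by showing that the hypotheses of the latter are met. First I would observe that $\mathcal{Q}_a \subseteq \mathcal{M}_a$: indeed, if $\ell \in \mathcal{Q}_a$ then $\ell \equiv 1 \pmod{18}$ forces $\ell \equiv 1 \pmod 6$, and the conditions $\ell \nmid a$ and $\widetilde{E}(\F_\ell)[3]=0$ are shared verbatim. Hence an integer $m$ divisible only by primes of $\mathcal{Q}_a$ is automatically divisible only by primes of $\mathcal{M}_a$, giving condition (3) of Theorem \ref{section 4 basic thm}. Condition (2) is the standing hypothesis $\op{Sel}_3(E/K)=0$. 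For condition (4), each prime divisor $\ell$ of $m$ lies in $\mathcal{Q}_a$ and so is a cube modulo every prime $q \mid a$; since being a cube modulo $q$ is multiplicative in the base, $m$ itself is a cube modulo each such $q$.

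The one hypothesis requiring real work is condition (1): $m \equiv 1 \pmod 9$. Here I would use that every prime $\ell \mid m$ satisfies $\ell \equiv 1 \pmod{18}$, hence in particular $\ell \equiv 1 \pmod 9$. Because $m$ is a product (with multiplicities, but $m$ is cubefree so exponents are $1$ or $2$) of such primes, and $1 \cdot 1 \equiv 1 \pmod 9$, we get $m \equiv 1 \pmod 9$ by multiplicativity. This is the step where the strengthening from $\mathcal{M}_a$ (where only $\ell \equiv 1 \pmod 6$ is imposed) to $\mathcal{Q}_a$ (where $\ell \equiv 1 \pmod{18}$) is used: the extra congruence modulo $9$ on each prime factor is exactly what is needed to force $m \equiv 1 \pmod 9$ without having to impose that congruence on $m$ directly. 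I would also note that $\ell \equiv 1 \pmod{18}$ implies $\ell$ is coprime to $3$, so $m$ is coprime to $3$ as assumed in the setup of section \ref{sec 4.2}, and $m \notin (K^\times)^3$ provided $m \neq \pm 1$ (the case $m = 1$ being trivial and $m=-1$ excluded since $-1 \not\equiv 1 \pmod 9$, or more simply handled by noting the conclusion is vacuous/immediate there).

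With conditions (1)--(4) of Theorem \ref{section 4 basic thm} verified, that theorem yields $\op{Sel}_3(E/L_m)=0$, and the displayed isomorphisms $\op{Sel}_3(E/L_m)\simeq \op{Sel}_3(E_{(m)}/L_m)\simeq \op{Sel}_3(E_{(m^2)}/L_m)$ from section \ref{sec 4.2}, together with the inclusions $E_{(m)}(K) \hookrightarrow E_{(m)}(L_m)$ and $E_{(m^2)}(K) \hookrightarrow E_{(m^2)}(L_m)$, give $\op{rank} E_{(m)}(K) = 0$ and $\op{rank} E_{(m^2)}(K) = 0$. I do not anticipate any genuine obstacle; the only point demanding care is the clean multiplicative bookkeeping for the congruence and cube conditions, and confirming that $\mathcal{Q}_a \subseteq \mathcal{M}_a$ so that Theorem \ref{section 4 basic thm} is directly applicable rather than needing to be re-proven.
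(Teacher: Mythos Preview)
Your proposal is correct and follows exactly the approach of the paper: verify that the hypotheses (1)--(4) of Theorem \ref{section 4 basic thm} hold for any $m$ supported on $\mathcal{Q}_a$, then invoke that theorem. The paper's proof is a single sentence asserting this is ``clear''; you have simply spelled out the multiplicative bookkeeping (notably that $\ell\equiv 1\pmod{18}$ for each prime factor forces $m\equiv 1\pmod 9$, and that the cube-mod-$q$ condition is multiplicative), which is precisely what that sentence is gesturing at.
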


\begin{proof}
    It is clear that the conditions of Theorem \ref{section 4 basic thm} are satisfied for integers $m$ that are divisible exclusively by primes in $\mathcal{Q}_a$.
\end{proof}

In light of Corollary \ref{Q_a corollary}, it is natural to investigate the density of the set of primes $\mathcal{Q}_a$. Let $\rho_{E,3}: \op{G}_\Q\rightarrow \op{GL}_2(\F_3)$ denote the Galois representation on $E[3]$. Set $\Q(E[3])$ to denote the Galois extension of $\Q$ which is \emph{cut out by} $E[3]$. In greater detail, $\Q(E[3])$ is defined as $\bar{\Q}^{\op{ker}\rho_{E,3}}$. It is clear that 
\[\op{Gal}(\Q(E[3])/\Q)\simeq \frac{\op{G}_\Q}{\op{ker}\rho_{E,3}}\simeq \op{image}\rho_{E,3}.\]
Given $\sigma\in \op{Gal}(\Q(E[3])/\Q)$, set $\rho(\sigma):=\rho_{E,3}(\sigma)$. Note that $E$ admits a rational $3$-isogeny $\varphi: E\rightarrow E'$, and therefore, there is a $\op{G}_{\Q}$-stable subgroup $C\subset E[3]$ that has order $3$. This means that the representation $\rho_{E,3}$ is reducible. Choose an $\F_3$-basis of $E[3]$ such that the image of $\rho_{E,3}$ is contained $B$, the group of upper triangular matrices $\mtx{\ast}{\ast}{0}{\ast}$ in $\op{GL}_2(\F_3)$. In other words, let $v_1$ be a generator of $C$ and let $v_2\in E[3]$ be another vector not contained in $C$. Then, with respect to the basis $\{v_1, v_2\}$, one obtains an upper triangular representation. Thus, there are characters $\varphi, \psi: \op{G}_\Q\rightarrow \F_3^\times=\{\pm 1\}$ such that 
\[\rho_{E, 3}=\mtx{\varphi}{\ast}{0}{\psi}.\] As a Galois module $C\simeq \F_3(\varphi)$. Let $\omega$ denote the mod-$3$ cyclotomic character and note that $\det \rho_{E,3}=\varphi \psi=\omega$. Write $B=T\ltimes U$, where $T$ consists of diagonal matrices $\mtx{\ast}{}{}{\ast}$ and $U$ consists of unipotent matrices $\mtx{1}{\ast}{0}{1}$. 

\begin{lemma}\label{E[3] image lemma}
    With respect to notation above, the following conditions are equivalent 
    \begin{enumerate}
        \item $E(K)[3]\neq 0$.
        \item We have that $\{\varphi, \psi\}=\{1, \omega\}$,
        \item the image of $\rho_{E,3}$ does not contain $-\op{Id}=\mtx{-1}{0}{0}{-1}$.
    \end{enumerate}
\end{lemma}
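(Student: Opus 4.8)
The plan is to exploit the structure $\rho_{E,3}=\left(\begin{smallmatrix}\varphi&\ast\\0&\psi\end{smallmatrix}\right)$ together with the relation $\varphi\psi=\omega$, where $\omega$ is the mod-$3$ cyclotomic character, and the elementary fact that over $K=\Q(\mu_3)$ the field of definition of a point is governed by the restrictions of these characters to $\op{G}_K$. First I would observe that $\varphi|_{\op{G}_K}$ is trivial precisely when $\varphi$ is either trivial or equal to $\omega$ (since $\op{Gal}(K/\Q)$ is the kernel of $\omega$ and $\varphi$ takes values in $\{\pm1\}$), and similarly for $\psi$; this is where condition (2) will enter.

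Next I would prove (1)$\Leftrightarrow$(2). A nonzero element of $E(K)[3]$ is a $\op{G}_K$-fixed vector in $E[3]$. Since the submodule $C\simeq\F_3(\varphi)$ is $\op{G}_\Q$-stable, if $\varphi|_{\op{G}_K}=1$ then $C\subseteq E(K)[3]$, so $E(K)[3]\ne0$; conversely, if $\psi|_{\op{G}_K}=1$ then choosing any lift $v_2$ of a generator of $E[3]/C$, the image of $v_2$ is $\op{G}_K$-fixed in the quotient, and a short cocycle/coboundary computation (using $H^1(\op{G}_K,\F_3(\varphi))$ — which vanishes or not according to $\varphi|_{\op{G}_K}$) adjusts $v_2$ to an honest fixed vector, giving $E(K)[3]\ne0$. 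If neither $\varphi|_{\op{G}_K}$ nor $\psi|_{\op{G}_K}$ is trivial, then $E[3]^{\op{G}_K}=0$: any fixed vector projects to a fixed vector in $E[3]/C\simeq\F_3(\psi)$, hence lies in $C$, hence is a fixed vector of $\F_3(\varphi)$, hence is $0$. Combining these with the first observation, $E(K)[3]\ne0$ iff $\varphi|_{\op{G}_K}=1$ or $\psi|_{\op{G}_K}=1$ iff $\varphi\in\{1,\omega\}$ or $\psi\in\{1,\omega\}$; and because $\varphi\psi=\omega$, $\varphi\in\{1,\omega\}$ already forces $\psi\in\{\omega,1\}$ respectively, so this is equivalent to $\{\varphi,\psi\}=\{1,\omega\}$, which is (2).

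For (2)$\Leftrightarrow$(3), note $-\op{Id}$ lies in the image of $\rho_{E,3}$ iff there is $\sigma$ with $\varphi(\sigma)=\psi(\sigma)=-1$ and the off-diagonal entry vanishing for that $\sigma$. Since the image is contained in the Borel $B=T\ltimes U$ and $U$ has order $3$ while $-\op{Id}$ is central of order $2$, the presence of $-\op{Id}$ is equivalent to the presence of $-\op{Id}$ in the image of the composite $\op{G}_\Q\to B\to T$, i.e. to $(\varphi,\psi)$ simultaneously taking the value $-1$. Now $(\varphi,\psi)$ hits $(-1,-1)$ iff the character $\varphi/\psi=\varphi^2\omega^{-1}=\omega^{-1}=\omega$ (using $\varphi^2=1$) — wait, more carefully: the pair $(\varphi,\psi)$ as a map to $\{\pm1\}^2$ misses $(-1,-1)$ exactly when $\varphi,\psi$ cannot both be $-1$, which given $\varphi\psi=\omega\ne1$ happens iff one of $\varphi,\psi$ is trivial, i.e. iff $\{\varphi,\psi\}=\{1,\omega\}$. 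Thus $-\op{Id}\notin\op{image}\rho_{E,3}$ iff (2) holds, which is (3).

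\textbf{Main obstacle.} The delicate point is the converse direction of (1)$\Rightarrow$(2) in the "$\psi|_{\op{G}_K}$ trivial" case: a fixed vector in the quotient $E[3]/C$ need not lift to a fixed vector without checking that the relevant obstruction in $H^1(\op{G}_K,C)$ vanishes, or arguing directly that since $C$ itself has a $\op{G}_K$-fixed generator (because $\varphi|_{\op{G}_K}$ is then forced to be $\omega|_{\op{G}_K}\cdot(\psi|_{\op{G}_K})^{-1}=1$), the full $E[3]$ is $\op{G}_K$-fixed or splits appropriately. Handling this cleanly — rather than by a brute-force cocycle computation — is what I expect to require the most care; the bookkeeping with $\varphi\psi=\omega$ and $\varphi^2=\psi^2=1$ to pin down exactly which restrictions are trivial is the crux.
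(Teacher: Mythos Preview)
Your argument is correct and follows essentially the same approach as the paper. The paper streamlines (1)$\Leftrightarrow$(2) by observing at the outset that $\varphi|_{\op{G}_K}=\psi|_{\op{G}_K}$ (since $\psi=\varphi\omega$ and $\omega|_{\op{G}_K}=1$); hence the two restrictions are simultaneously trivial or not, and $E(K)[3]\neq 0\Leftrightarrow C^{\op{G}_K}\neq 0\Leftrightarrow \varphi|_{\op{G}_K}=1\Leftrightarrow\varphi\in\{1,\omega\}$, with no lifting or cocycle computation ever required---so the ``main obstacle'' you flag dissolves immediately (as you yourself note parenthetically at the end). For (2)$\Leftrightarrow$(3) the paper's argument coincides with yours: in the nontrivial direction it chooses $\sigma\in\op{G}_K$ with $\varphi(\sigma)=-1$, deduces $\psi(\sigma)=-1$ from $\omega(\sigma)=1$, and then computes $\rho_{E,3}(\sigma)^3=-\op{Id}$ (cubing kills the off-diagonal entry over $\F_3$), which is precisely your projection-to-$T$ reasoning made concrete.
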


\begin{proof}
    \par First, we prove (1) is equivalent to (2). Since $\varphi$ and $\psi$ take values in $\{\pm 1\}$, we have that $\varphi^2=\psi^2=1$. Since $\varphi \psi= \omega$, we find that $\psi= \varphi^{-1} \omega=\varphi \omega$. Since $\omega_{|\op{G}_K}=1$, we find that $\varphi_{|\op{G}_K}=\psi_{|\op{G}_K}$. The condition that $E(K)[3]\neq 0$ is equivalent to the condition that $\varphi_{|\op{G}_K}=\psi_{|\op{G}_K}=1$. Note that any character $\epsilon:\op{G}_\Q\rightarrow \F_3^\times $ such that $\epsilon_{|\op{G}_K}=1$ is either $1$ or $\omega$. Thus, the condition (1) implies that $\varphi, \psi\in \{1, \omega\}$. Since $\varphi= \psi \omega$, we find that $\{\varphi, \psi\}=\{1, \omega\}$. Conversely, if $\{\varphi, \psi\}=\{1, \omega\}$, then, $\varphi_{|\op{G}_K}=\psi_{|\op{G}_K}=1$, and thus, $E(K)[3]\neq 0$. This implies that (2) implies (1). Thus, (1) and (2) are equivalent.

    \par Suppose that $\{\varphi, \psi\}=\{1, \omega\}$, then, $-\op{Id}\notin \op{image} \rho_{E,3}$ since either $\varphi$ or $\psi$ is $1$. Conversely, suppose that $\{\varphi, \psi\}\neq \{1, \omega\}$, then, $\varphi\neq 1$. Thus, there exists $\sigma\in \op{G}_K$ such that $\varphi(\sigma)=-1$. Since $\varphi(\sigma)\psi(\sigma)=\omega(\sigma)=1$, we find that $\psi(\sigma)=-1$ and hence $\rho_{E,3}(\sigma)^3=-\op{Id}$. This proves that (2) and (3) are equivalent.  
\end{proof}
Let $q$ be a prime dividing $a$. Suppose that $q\equiv 1\mod{3}$, then set $\mathbb{L}_q$ to denote the unique $\Z/3\Z$ extension of $\Q$ contained in $\Q(\mu_q)$. On the other hand, if $q\not\equiv 1\mod{3}$, set $\mathbb{L}_q:=\Q$. Denote by \[\mathbb{L}:=\left(\prod_{q|a} \mathbb{L}_q\right)\cdot \Q(\mu_{18})\] the compositum of all the fields $\mathbb{L}_q$ as $q$ ranges over the primes dividing $a$, and the field $\Q(\mu_{18})$. 

\begin{lemma}\label{dumb lemma 2}
    Let $\ell\nmid 6 a$ be a prime. Then, the conditions (1)--(3) of Definition \ref{def of Q_a} are satisfied by $\ell$ if and only if $\ell$ splits completely in $\mathbb{L}$. 
\end{lemma}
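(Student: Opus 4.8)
The plan is to reduce the claim to the standard dictionary between complete splitting of a prime and the triviality of its Frobenius in an abelian extension, applied factor by factor to the compositum $\mathbb{L}$. The key elementary input is that if $F_1,F_2$ are finite abelian extensions of $\Q$ and $\ell$ is unramified in both, then $\ell$ splits completely in $F_1F_2$ if and only if it splits completely in each of $F_1$ and $F_2$: indeed $\op{Gal}(F_1F_2/\Q)$ embeds into $\op{Gal}(F_1/\Q)\times\op{Gal}(F_2/\Q)$ and carries the decomposition group at $\ell$ into the product of the decomposition groups, so triviality of the latter two forces triviality of the former. First I would observe that since $\ell\nmid 6a$, the prime $\ell$ is unramified in $\Q(\mu_{18})$ (ramification there divides $18$) and in every $\mathbb{L}_q$ (ramification there divides $q\mid a$), hence unramified in $\mathbb{L}$; moreover condition (1) of Definition \ref{def of Q_a} holds automatically. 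So it remains to identify precisely when $\ell$ splits completely in each factor of $\mathbb{L}$.

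Next I would treat $\Q(\mu_{18})$: for $\ell\nmid 18$, $\ell$ splits completely in $\Q(\mu_{18})$ if and only if $\ell\equiv 1\pmod{18}$, which is exactly condition (2). Then, for each prime $q\mid a$, I distinguish two cases. If $q\not\equiv 1\pmod 3$ (which includes the possibility $q=3$), then by definition $\mathbb{L}_q=\Q$, so the condition "splits completely in $\mathbb{L}_q$" is vacuous; correspondingly $\gcd(3,q-1)=1$ (for $q\equiv 2\pmod 3$ this is clear, and for $q=3$ the group $(\Z/q\Z)^\times$ has order $2$), so cubing is a bijection on $(\Z/q\Z)^\times$ and "$\ell$ is a cube modulo $q$" also holds automatically. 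If instead $q\equiv 1\pmod 3$, then $\op{Gal}(\Q(\mu_q)/\Q)\cong(\Z/q\Z)^\times$ is cyclic of order divisible by $3$, and $\mathbb{L}_q$ is the fixed field of its unique index-$3$ subgroup, namely the subgroup of cubes $((\Z/q\Z)^\times)^3$; since the Frobenius at $\ell$ corresponds to the class $\ell\bmod q$, the prime $\ell$ splits completely in $\mathbb{L}_q$ precisely when $\ell\bmod q$ is a cube, i.e.\ when $\ell$ is a cube modulo $q$.

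Finally I would assemble the pieces: $\ell$ splits completely in $\mathbb{L}$ iff it splits completely in $\Q(\mu_{18})$ and in every $\mathbb{L}_q$, iff $\ell\equiv 1\pmod{18}$ and $\ell$ is a cube modulo every prime $q\mid a$ with $q\equiv 1\pmod 3$; and by the case analysis above the latter cube condition may be imposed over all primes $q\mid a$ without changing anything, so this is exactly conditions (2) and (3), and together with the automatic condition (1) this is equivalent to (1)--(3) holding. The only point that really needs care is this last bookkeeping step — matching the product $\prod_{q\mid a}\mathbb{L}_q$ in the definition of $\mathbb{L}$ against the blanket "cube modulo $q$" requirement in Definition \ref{def of Q_a}, by checking that the cube condition is vacuous exactly for the primes $q$ with $\mathbb{L}_q=\Q$; the rest is the standard splitting behavior of primes in cyclotomic and abelian extensions.
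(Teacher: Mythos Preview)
Your proof is correct and follows the same approach as the paper's, which simply records the two key equivalences (splitting in $\Q(\mu_{18})$ iff $\ell\equiv 1\pmod{18}$; splitting in $\mathbb{L}_q$ iff $\ell$ is a cube modulo $q$) without further elaboration. You have filled in the details carefully, including the bookkeeping that the cube condition is vacuous precisely when $\mathbb{L}_q=\Q$, which the paper leaves implicit.
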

\begin{proof}
    A prime $\ell$ splits in $\Q(\mu_{18})$ if and only if $\ell \equiv 1\mod{18}$. Note that $\ell$ splits in $\mathbb{L}_q$ if and only if $\ell$ is a cube modulo $q$. This proves the result.
\end{proof}
Note that the field $\Q(\mu_{18})$ is unramified at all primes other than $2$ and $3$. On the other hand, $\mathbb{L}_q$ is totally ramified at $q$, and unramified at all other primes. Recall that $\mathbb{L}_q:=\Q$ unless $q\equiv 1\mod{3}$. This means that the fields $\{\mathbb{L}_q\}$ and $\Q(\mu_{18})$ are all mutually disjoint. Therefore, we find that \[[\mathbb{L}:\Q]=\prod_{q|a} [\mathbb{L}_q: \Q]\times [\Q(\mu_{18}):\Q]=3^s \times 6= 2 \times 3^{s+1},\] where $s$ is the number of primes $q\equiv 1\mod{3}$ which divide $a$. Set $F:=\Q(E[3])$, $G:=\op{Gal}(F/\Q)$ and $A\subset G$ be the subset of $\sigma\in G$ such that 
\[\op{det}\rho_{E,3}(\sigma)=1\text{ and } \op{trace} \rho_{E,3}(\sigma)\neq 2.\]

\begin{lemma}\label{dumb lemma}
    With respect to notation above, let $\ell\nmid 6a$ be a prime number. Then $\ell$ is unramified in $\Q(E[3])$ and the following are equivalent 
    \begin{enumerate}
        \item $\ell\equiv 1\mod{3}$ and $\widetilde{E}(\F_\ell)[3]=0$, 
        \item $\op{Frob}_\ell\in A$.
    \end{enumerate}
\end{lemma}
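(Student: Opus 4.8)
The plan is to relate the arithmetic conditions on $\ell$ to the conjugacy class of $\op{Frob}_\ell$ in $G=\op{Gal}(\Q(E[3])/\Q)$ via the standard dictionary between reduction data and the mod-$3$ Galois representation. First, since $\ell\nmid 6a$, the curve $E$ has good reduction at $\ell$ and $\ell\neq 3$, so $\rho_{E,3}$ is unramified at $\ell$ and $\op{Frob}_\ell$ is well-defined up to conjugacy; this gives the first assertion. Next I would recall the two basic identities: $\det\rho_{E,3}(\op{Frob}_\ell)=\omega(\op{Frob}_\ell)\equiv \ell \pmod 3$ and $\op{trace}\rho_{E,3}(\op{Frob}_\ell)\equiv a_\ell(E)\pmod 3$, where $a_\ell(E)=\ell+1-\#\widetilde{E}(\F_\ell)$. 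From the first identity, the condition $\ell\equiv 1\pmod 3$ is equivalent to $\det\rho_{E,3}(\op{Frob}_\ell)=1$.

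The second step is to translate $\widetilde{E}(\F_\ell)[3]=0$ into a trace condition, working under the assumption $\ell\equiv 1\pmod 3$ (equivalently $\det=1$, so $\rho_{E,3}(\op{Frob}_\ell)\in\op{SL}_2(\F_3)$). The group $\widetilde{E}(\F_\ell)[3]$ is the fixed space of $\rho_{E,3}(\op{Frob}_\ell)$ acting on $E[3]\cong\F_3^2$; it is nonzero precisely when $1$ is an eigenvalue of $\rho_{E,3}(\op{Frob}_\ell)$, i.e. when the characteristic polynomial $X^2-\op{trace}(\op{Frob}_\ell)X+\det(\op{Frob}_\ell)=X^2-\op{trace}(\op{Frob}_\ell)X+1$ has $X=1$ as a root. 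Plugging in $X=1$ gives $1-\op{trace}(\op{Frob}_\ell)+1=2-\op{trace}(\op{Frob}_\ell)$, so $1$ is an eigenvalue if and only if $\op{trace}\rho_{E,3}(\op{Frob}_\ell)=2$ in $\F_3$. Hence, given $\det=1$, we have $\widetilde{E}(\F_\ell)[3]=0$ if and only if $\op{trace}\rho_{E,3}(\op{Frob}_\ell)\neq 2$. Combining with the first step, conditions $\ell\equiv 1\pmod 3$ and $\widetilde{E}(\F_\ell)[3]=0$ hold simultaneously if and only if $\det\rho_{E,3}(\op{Frob}_\ell)=1$ and $\op{trace}\rho_{E,3}(\op{Frob}_\ell)\neq 2$, which is exactly the statement $\op{Frob}_\ell\in A$.

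I do not anticipate a serious obstacle here; the lemma is essentially bookkeeping with the mod-$3$ reduction dictionary. The one point requiring a small amount of care is the eigenvalue/fixed-point argument over $\F_3$: one should note that $\widetilde{E}(\F_\ell)[3]\neq 0$ means $\rho_{E,3}(\op{Frob}_\ell)$ has a nonzero fixed vector, which over a field is equivalent to $1$ being an eigenvalue, and that a $2\times 2$ matrix with $\det=1$ has $1$ as an eigenvalue iff its characteristic polynomial vanishes at $1$ iff its trace equals $2$ — this last step uses $1-\op{tr}+1=0$, i.e. $\op{tr}=2$, valid in any characteristic. No issues with the possibility of a repeated eigenvalue arise, since we only need the existence of the eigenvalue $1$, not its multiplicity. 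This finishes the plan.
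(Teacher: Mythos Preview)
Your proof is correct and follows essentially the same approach as the paper: both use that $\det\rho_{E,3}(\op{Frob}_\ell)\equiv\ell$ and $\op{trace}\rho_{E,3}(\op{Frob}_\ell)\equiv \ell+1-\#\widetilde{E}(\F_\ell)$, together with Neron--Ogg--Shafarevich for the unramifiedness claim. The paper simply quotes these identities and says ``the result thus follows,'' whereas you fill in the eigenvalue/characteristic-polynomial translation explicitly; note that your computation $1-\op{tr}+\det$ is exactly $\#\widetilde{E}(\F_\ell)\bmod 3$, so the two arguments are the same calculation.
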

\begin{proof}
    Since $\ell\nmid 6a$, it is a prime of good reduction and it follows from the Neron--Ogg--Shafarevich theorem that $\ell$ is unramified in $\Q(E[3])$. We then note that 
    \[\op{det}\rho_{E,3}(\op{Frob}_\ell)=\ell \text{ and }\op{trace}\rho_{E,3}(\op{Frob}_\ell)=\ell+1-\# \widetilde{E}(\F_\ell).\] The result thus follows.
\end{proof}

\begin{lemma}\label{lemma F cap L =K}
    With respect to the notation above, we have that $F\cap \mathbb{L}=K$.
\end{lemma}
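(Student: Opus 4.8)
The plan is to prove $K\subseteq F\cap\mathbb{L}$ by inspection and then determine $[F\cap\mathbb{L}:\Q]$ by a degree count. For the inclusion, the Weil pairing on $E[3]$ (equivalently $\det\rho_{E,3}=\omega$, the mod-$3$ cyclotomic character) gives $\mu_3\subseteq\Q(E[3])=F$, while $K=\Q(\mu_3)\subseteq\Q(\mu_{18})\subseteq\mathbb{L}$ by the definition of $\mathbb{L}$; hence $K\subseteq F\cap\mathbb{L}$, and it remains to show that this intersection has degree exactly $2$ over $\Q$.

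I would use two facts. First, as recorded above, $\op{Gal}(\mathbb{L}/\Q)\cong(\Z/3)^{s+1}\times\Z/2$, so $[F\cap\mathbb{L}:\Q]$ divides $[\mathbb{L}:\Q]=2\cdot 3^{s+1}$. Second, the maximal abelian subextension $F^{\mathrm{ab}}$ of $F/\Q$ satisfies $[F^{\mathrm{ab}}:\Q]\in\{1,2,4\}$. Granting these, the argument finishes at once: $F\cap\mathbb{L}$ is a subfield of the abelian extension $\mathbb{L}/\Q$, hence abelian over $\Q$, hence contained in $F^{\mathrm{ab}}$; thus $[F\cap\mathbb{L}:\Q]$ is a power of $2$ bounded by $4$ and also a divisor of $2\cdot 3^{s+1}$, which forces it to be $1$ or $2$; since $K\subseteq F\cap\mathbb{L}$ it equals $2$, so $F\cap\mathbb{L}=K$.

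The substance is the second fact, which is purely group-theoretic. Write $H:=\op{Gal}(F/\Q)=\op{image}\rho_{E,3}\subseteq B=T\ltimes U$, where $T\cong(\Z/2)^2$ is the diagonal torus and $U\cong\Z/3$ the unipotent radical of the Borel $B\subseteq\op{GL}_2(\F_3)$. A short commutator computation gives $[B,B]=U$, so $[H,H]\subseteq H\cap U$. If $H\cap U=0$, then $H\hookrightarrow T$ is abelian of order dividing $4$. If $H\cap U=U$ and $[H,H]=0$, then $H$ is abelian and contains $U$; but conjugation by a diagonal matrix $\mathrm{diag}(a,d)$ scales $U$ by $a/d$, so commutativity forces every element of $H$ to have equal diagonal entries, whence $H\subseteq\{\pm\op{Id}\}\cdot U$ and $\det|_H=1$, contradicting $\det\rho_{E,3}=\omega\neq 1$. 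In the remaining case $[H,H]=U$ and $H^{\mathrm{ab}}=H/U\hookrightarrow T$. In every admissible case $H^{\mathrm{ab}}$ is a $2$-group of order at most $4$, and since $\op{Gal}(F^{\mathrm{ab}}/\Q)=H^{\mathrm{ab}}$ this is precisely the second fact.

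I expect the only real obstacle to be the bookkeeping in the group-theoretic step, i.e.\ excluding an abelian image of order divisible by $3$; this is handled cleanly by the determinant obstruction, since $\det\rho_{E,3}$ is the nontrivial mod-$3$ cyclotomic character. Everything else is elementary manipulation of field degrees using only the structure of $\mathbb{L}$ already established above.
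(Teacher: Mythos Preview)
Your argument is correct and follows essentially the same route as the paper: both proofs observe that $K\subseteq F\cap\mathbb{L}$ via the Weil pairing, and then show that the abelianization of $\op{Gal}(F/\Q)\subseteq B$ is a $2$-group (using that $\det\rho_{E,3}=\omega$ is surjective), so the abelian subfield $F\cap\mathbb{L}$ has $2$-power degree over $\Q$ and hence equals $K$. The only difference is cosmetic: the paper verifies the abelianization claim by explicitly listing the subgroups of $B=\langle -\op{Id}\rangle\times B_0$ with surjective determinant, whereas your commutator/centralizer case analysis on $H\cap U$ handles the same point more uniformly.
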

\begin{proof}
Set $H:=\op{Gal}(F/K)\subset G$ and $H':=\op{Gal}(F/F\cap \mathbb{L})\subseteq H$. We show that $H'=H$. Note that $G\simeq \rho_{E,3}(G)$ and $H\simeq \rho_{E,3}(H)$, and hence we may view these as subgroups of $B$. Since $\omega=\op{det}\rho_{E,3}$, we find that $G$ contains a matrix with determinant $-1$. Note that $B=\langle -\op{Id}\rangle \times B_0$, where $B_0$ consists of matrices of the form $\mtx{\ast}{\ast}{0}{1}$. Note that $B_0$ contains $U$ as its unique normal subgroup of order $3$, and contains $3$ subgroups $C_1$, $C_2$ and $C_3$ that are of order $2$. The possibilities for $G$ are $B$, $B_0$, $C_i$ and $\langle -\op{Id}\rangle \times C_i$ as $i=1, 2,3$. Note that since $-\op{Id}\in G$, the groups $U$ and $\langle -\op{Id}\rangle \times U$ are excluded from our list. All abelian quotients of these groups are $2$-groups. Hence, $G/H'=\op{Gal}(F\cap \mathbb{L}/\Q)$ is an abelian $2$-group. Since the abelian $2$-quotient of $\op{Gal}(\mathbb{L}/\Q)$ is $\op{Gal}(K/\Q)$, it follows that $F\cap \mathbb{L}=K$.
\end{proof}

\begin{proposition}\label{Q_a density proposition}
    Assume that $a$ is not a square in $K$ and is prime to $3$. Then, with respect to notation above, the set of primes $\mathcal{Q}_a$ has positive density given by
    \[\mathfrak{d}(\mathcal{Q}_a)=\frac{1}{8 \times 3^{s+1}},\] where $s$ is the number of prime divisors $q$ of $a$ such that $q\equiv 1\pmod{3}$. 
\end{proposition}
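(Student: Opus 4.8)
The plan is to rewrite membership in $\mathcal{Q}_a$ as a Frobenius condition and apply the Chebotarev density theorem over the compositum $F\mathbb{L}$, where $F:=\Q(E[3])$. Since the primes $\ell\mid 6a$ form a finite (hence density-zero) set, it suffices to treat $\ell\nmid 6a$. For such $\ell$, Lemma~\ref{dumb lemma 2} says that conditions (1)--(3) of Definition~\ref{def of Q_a} hold exactly when $\ell$ splits completely in $\mathbb{L}$, i.e. $\op{Frob}_\ell$ lies in the trivial class of $\op{Gal}(\mathbb{L}/\Q)$; and, granting $\ell\equiv 1\pmod 3$ (which is contained in condition (2)), Lemma~\ref{dumb lemma} says that condition (4) is then equivalent to $\op{Frob}_\ell\in A$ inside $G:=\op{Gal}(F/\Q)\cong\op{image}\rho_{E,3}$. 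The subset $A\subseteq G$ is a union of conjugacy classes, being cut out by the class functions $\det\rho_{E,3}$ and $\op{tr}\rho_{E,3}$, and the trivial element of $\op{Gal}(\mathbb{L}/\Q)$ is conjugation-stable; hence the set of $g\in\op{Gal}(F\mathbb{L}/\Q)$ whose restriction to $\mathbb{L}$ is trivial and whose restriction to $F$ lies in $A$ is conjugation-stable, and by the Chebotarev density theorem the density of primes $\ell$ whose Frobenius lands in this set equals the set's proportion in $\op{Gal}(F\mathbb{L}/\Q)$.

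Next I would carry out the group-theoretic bookkeeping. The cyclotomic character $\omega=\det\rho_{E,3}$ cuts out $K=\Q(\mu_3)$, so every $\sigma\in A$ fixes $K$; hence $A\subseteq H:=\op{Gal}(F/K)$. By Lemma~\ref{lemma F cap L =K} we have $F\cap\mathbb{L}=K$, so $\op{Gal}(F\mathbb{L}/\Q)$ is the fibre product of $G$ and $\op{Gal}(\mathbb{L}/\Q)$ over $\op{Gal}(K/\Q)$; consequently $\#\op{Gal}(F\mathbb{L}/\Q)=\#G\cdot[\mathbb{L}:K]$, and $[\mathbb{L}:K]=\tfrac12[\mathbb{L}:\Q]=3^{s+1}$ because $K\subseteq\Q(\mu_{18})\subseteq\mathbb{L}$. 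Restriction to $F$ identifies the normal subgroup $\op{Gal}(F\mathbb{L}/\mathbb{L})$ with $H$, and under this identification the set of $g$ selected in the previous paragraph corresponds exactly to $A\cap H=A$. Thus $\mathfrak{d}(\mathcal{Q}_a)=\#A/(\#G\cdot 3^{s+1})$, and it remains to evaluate $\#A/\#G$.

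For the final step I would pin down the image of $\rho_{E,3}$. The rational $3$-isogeny produces a $G_\Q$-stable line in $E[3]$, so $G$ lies in the Borel subgroup $B$ of upper-triangular matrices in $\op{GL}_2(\F_3)$; and the complex multiplication of $E_a$ by $\cO_K$ forces $\rho_{E,3}(G_K)$ into the order-$6$ ``Cartan'' $\mathcal{C}\subseteq B$ consisting of the units commuting with a fixed nonzero nilpotent, on which the trace equals $2$ precisely on the unique order-$3$ subgroup $U$. Surjectivity of $\omega$ gives $[G:\rho_{E,3}(G_K)]=2$. The hypothesis $a\notin(K^\times)^2$ amounts to $E(K)[3]=0$ (the first Lemma of Section~\ref{sec 4}, together with Lemma~\ref{E[3] image lemma}), and this forbids $\rho_{E,3}(G_K)\subseteq U$, since any subgroup of $U$ fixes a line of $E[3]$; as $\mathcal{C}$ is cyclic of order $6$, its only subgroups not contained in $U$ are $\langle-\op{Id}\rangle$ and $\mathcal{C}$ itself, and in each of these exactly half the elements have trace $\neq 2$. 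This determines $\#A$ as a fixed fraction of $\#G$, independent of which admissible image occurs, and substituting into $\mathfrak{d}(\mathcal{Q}_a)=\#A/(\#G\cdot 3^{s+1})$ yields the asserted density after a short arithmetic check.

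I expect the crux to be this last step: one must classify $\op{image}\rho_{E,3}$ precisely enough to see that $\#A/\#G$ is the same for every image permitted by the hypotheses, which requires combining the $3$-isogeny, the CM--Cartan picture, and the non-square condition; and one must keep the conjugacy-class and fibre-product bookkeeping over $F\mathbb{L}$ exactly right, since the two Frobenius conditions are not independent --- they both impose triviality of $\op{Frob}_\ell|_K$. A minor point to check along the way is that condition (4) of Definition~\ref{def of Q_a} is unchanged if $\F_\ell$ is replaced by the residue field of $K$ at a place above $\ell$, which is automatic because every $\ell$ relevant here is $\equiv 1\pmod 3$ and hence splits in $K$.
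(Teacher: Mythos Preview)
Your proposal is correct and tracks the paper's proof closely: both translate membership in $\mathcal{Q}_a$ into a Frobenius condition via Lemmas~\ref{dumb lemma 2} and~\ref{dumb lemma}, invoke Lemma~\ref{lemma F cap L =K} to control $\op{Gal}(F\mathbb{L}/\Q)$ as a fibre product over $\op{Gal}(K/\Q)$, and apply Chebotarev to the conjugation-stable set cut out by $A$ and the trivial class in $\op{Gal}(\mathbb{L}/\Q)$. Your fibre-product bookkeeping is in fact slightly more explicit than the paper's, but equivalent.

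The one substantive difference is in how you establish $\#A=\#H/2$. The paper does this in one stroke: since $a\notin(K^\times)^2$, Lemma~\ref{E[3] image lemma} gives $-\op{Id}\in H$; every $h\in H$ is upper triangular of determinant $1$, so its diagonal is $(1,1)$ or $(-1,-1)$ and hence $\op{tr}(h)\in\{1,2\}$; the fixed-point-free involution $h\mapsto -h$ on $H$ then pairs the trace-$2$ elements with the trace-$1$ elements, giving $\#A=\#H/2$ immediately. Your route instead invokes the CM by $\cO_K$ to force $H$ into the order-$6$ ``Cartan'' $\mathcal{C}\cong(\cO_K/3)^\times$, rules out $H\subseteq U$ via $E(K)[3]=0$, and checks the two surviving cases $H=\langle-\op{Id}\rangle$ and $H=\mathcal{C}$ by hand. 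This is valid and gives the same answer, but is more machinery than required: once you have $-\op{Id}\in H$ (which you already extract from Lemma~\ref{E[3] image lemma}), the paper's involution argument works for any $H$ contained in the Borel, without needing to classify the image or appeal to CM.
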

\begin{proof}
Recall that $F:=\Q(E[3])$, $G:=\op{Gal}(F/\Q)$ and $H:=\op{Gal}(F/K)$.
The representation
\[\rho=\rho_{E, 3}:\op{G}_\Q\rightarrow \op{GL}_2(\F_3),\]
induces a natural isomorphism between $G$ and the image of $\rho$. Let $\ell\nmid 6a$ be a prime, then, $\ell$ is unramified in $F$. Denote by $\op{Frob}_\ell\in G$ a Frobenius element of $\ell$. Recall that $A$ is the subset of $G$ consisting of $\sigma$ such that 
    \[\op{trace}\left(\rho(\sigma)\right)\neq 2\text{ and }\op{det} \left(\rho(\sigma)\right)=1.\]By Lemma \ref{dumb lemma}, the following are equivalent
    \begin{enumerate}
        \item $\ell\equiv 1\mod{3}$ and $\widetilde{E}(\F_\ell)[3]=0$, 
        \item $\op{Frob}_\ell\in A$. 
    \end{enumerate}
    It is clear that $A$ is contained in $H$. Since $a$ is assumed to not be square in $K$, we find that $E(K)[3]=0$. By Lemma \ref{E[3] image lemma}, $-\op{Id}\in A$. Given any matrix $h\in H$, we have that $\op{det}(\rho(h))=1$ and $\op{trace}(\rho(h))$ is either $1$ or $2$. Note that the trace of a matrix with determinant $1$ cannot be $0$. If $h\in H$, then since $-\op{Id}\in H$, we have that $-h\in H$. Thus, we find that \begin{equation}\label{A=H/2}\#A=\frac{\#H}{2}.\end{equation}
    \par We compute the density of $\mathcal{Q}_a$ by applying the Chebotarev density theorem to the field $F\cdot \mathbb{L}$. Note that $F \cap \mathbb{L}=K$ by Lemma \ref{lemma F cap L =K}. Thus, the Galois group $\op{Gal}(F\cdot \mathbb{L}/K)\simeq H\times \op{Gal}(\mathbb{L}/K)$. Let $\mathcal{S}$ be the subset $A\times \{1\}$ contained in $\op{Gal}(F\cdot \mathbb{L}/K)$ and view $\mathcal{S}$ as a subset of $\op{Gal}(F\cdot \mathbb{L}/\Q)$. Note that the set $\mathcal{S}$ is stable with respect to the action of conjugation. Let $\ell$ be a prime which is unramified in $F\cdot \mathbb{L}$ and such that $\op{Frob}_\ell\in \mathcal{S}$. Then, by Lemma \ref{dumb lemma 2}, the prime $\ell$ splits completely in $\mathbb{L}$, and consequently, conditions (1)--(3) of Definition \ref{def of Q_a} are satisfied. Also $\op{Frob}_\ell\in A$, it follows from Lemma \ref{dumb lemma} that (4) is also satisfied. Thus, by the Chebotarev density theorem and \eqref{A=H/2}, we find that 
    \[\mathfrak{d}(\mathcal{Q}_a)=\frac{\# \mathcal{S}}{[F\cdot \mathbb{L}:\Q]}=\frac{\# \mathcal{S}}{2[F\cdot \mathbb{L}:K]}=\frac{\# A }{2 \# H \times [\mathbb{L}:\Q]}=\frac{1}{4[\mathbb{L}:\Q]}=\frac{1}{8 \times 3^{s+1}}.\]
\end{proof}
We prove the main result of this article.
\begin{proof}[Proof of Theorem \ref{main thm of paper}]
   It is easy to see that $a\notin (K^\times)^2$. The result is a direct consequence of Corollary \ref{Q_a corollary} and Proposition \ref{Q_a density proposition}.
\end{proof}

\begin{corollary}
    Let $E$ and $K$ be as in Theorem \ref{main thm of paper}. Then, there is a positive density of primes $\ell$ such that 
    \[\op{rank} E_{\ell^2 a}(K)=0\text{ and } \op{rank} E_{\ell^4 a}(K)=0.\]
\end{corollary}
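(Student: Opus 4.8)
The plan is to deduce this corollary immediately from Theorem \ref{main thm of paper} by specializing the cubefree parameter $m$ there to be a single prime $\ell$. First I would observe that any prime $\ell\in\mathcal{Q}_a$ is, trivially, a cubefree integer divisible only by primes in $\mathcal{Q}_a$; hence Theorem \ref{main thm of paper} applies with $m=\ell$ and yields $\op{Sel}_3(E_{\ell^2 a}/K)=0$ and $\op{Sel}_3(E_{\ell^4 a}/K)=0$.

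Next I would pass from the vanishing of these Selmer groups to the vanishing of the Mordell--Weil ranks. By the exact sequence \eqref{n Selmer exact sequence} with $n=3$, the vanishing $\op{Sel}_3(E_{\ell^2 a}/K)=0$ forces $E_{\ell^2 a}(K)/3E_{\ell^2 a}(K)=0$, and since $E_{\ell^2 a}(K)$ is a finitely generated abelian group by the Mordell--Weil theorem, this forces $\op{rank} E_{\ell^2 a}(K)=0$; the identical argument applies to $E_{\ell^4 a}$. This is precisely the implication recorded in the introduction, that $\op{Sel}_3(E_b/K)=0$ implies $\op{rank} E_b(K)=0$, so one may alternatively just invoke that discussion directly.

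Finally, Theorem \ref{main thm of paper} itself asserts that $\mathcal{Q}_a$ has natural density $\tfrac{1}{8\times 3^{s+1}}$, where $s$ is the number of prime divisors $q$ of $a$ with $q\equiv 1\pmod 3$; in particular this density is strictly positive. Therefore the set of primes $\ell$ for which both $\op{rank} E_{\ell^2 a}(K)=0$ and $\op{rank} E_{\ell^4 a}(K)=0$ contains the positive-density set $\mathcal{Q}_a$, which completes the proof. I expect no genuine obstacle here: the only point worth noting is that a prime is a legitimate instance of the cubefree parameter $m$ in Theorem \ref{main thm of paper}, so this corollary is a pure specialization of the main result rather than requiring any new argument.
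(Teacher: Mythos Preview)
Your proposal is correct and follows essentially the same approach as the paper: the paper's proof is the single sentence ``The result is an immediate consequence of Theorem \ref{main thm of paper}, upon setting $m=\ell$ to be a prime number in $\mathcal{Q}_a$,'' and you have simply unpacked this by noting that a prime is a valid cubefree choice of $m$, that Selmer vanishing forces rank zero, and that the density of $\mathcal{Q}_a$ is positive.
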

\begin{proof}
    The result is an immediate consequence of Theorem \ref{main thm of paper}, upon setting $m=\ell$ to be a prime number in $\mathcal{Q}_a$. 
\end{proof}
\subsection{An illustrative example}\label{illustrative example section}
\par Let us illustrate the result in Theorem \ref{main thm of paper} through an explicit example. Consider the curve $E_{-1}:y^2=x^3-1$. We check that $\op{Sel}_3(E_{-1}/K)=0$. The quadratic twist of $E$ by $-3$ is given by $E_{27}$. It suffices to show that 
\[\op{Sel}_3(E_{-1}/\Q)=0\text{ and }\op{Sel}_3(E_{27}/\Q)=0.\]This is computed via the following Magma code
\begin{verbatim}
E:=EllipticCurve([0,0,0,0,-1]);
ThreeSelmerGroup(E);
Et := EllipticCurve([0,0,0,0,27]);
ThreeSelmerGroup(Et);

Ouput:
Abelian Group of order 1.
Abelian Group of order 1
\end{verbatim}
 






\par Note that $\mathcal{Q}_{-1}$ (cf. Definition \ref{def of Q_a}) consists of the primes $\ell\equiv 1\mod{18}$ such that $\widetilde{E}_{-1}(\F_\ell)[3]=0$. Corollary \ref{Q_a corollary} then asserts that if $m$ is divisible only by primes of $\mathcal{Q}_{-1}$, then $E_{-m^2}(K)$ and $E_{-m^4}(K)$ both have rank $0$. Proposition \ref{Q_a density proposition} asserts that the density of $\mathcal{Q}_{-1}$ is $\frac{1}{24}$. We arrive at the following concrete application of Theorem \ref{section 4 basic thm} and Proposition \ref{Q_a density proposition} for the elliptic curve $E:=E_{-1}$. 

\begin{theorem}\label{rank zero main theorem 4.10}
    Let $E_{/\Q}$ be the elliptic curve $y^2=x^3-1$. Given a cube-free integer $m$, let $E_{(m)}$ (resp. $E_{(m^2)}$) denote the cubic twist of $E$ defined by $y^2=x^3-m^2$ (resp. $y^2=x^3-m^4$). Let $\mathcal{Q}:=\mathcal{Q}_{-1}$ be the set of primes $\ell \equiv 1\mod{18}$ such that $\widetilde{E}(\F_\ell)[3]=0$. The set $\mathcal{Q}$ is infinite with density $1/24$. Furthermore, if $m$ is divisible by only the primes in $\mathcal{Q}$, then, \[\op{rank} E_{(m)}(K)=0 \text{ and } \op{rank} E_{(m^2)}(K)=0.\]
\end{theorem}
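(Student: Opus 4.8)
The plan is to verify the hypotheses of Theorem~\ref{section 4 basic thm} (equivalently, apply Corollary~\ref{Q_a corollary} with $a=-1$) and then invoke Proposition~\ref{Q_a density proposition} for the density claim. First I would record the two inputs that make the machinery applicable: (i) $a=-1$ is not a square in $K=\Q(\mu_3)$ and is prime to $3$, so $E_{-1}(K)[3]=0$ by the lemma relating $a\notin(K^\times)^2$ to vanishing of $E(K)[3]$; and (ii) $\op{Sel}_3(E_{-1}/K)=0$, which by the isogeny $E_{-1}\sim E_{27}$ over $\Q$ and the fact that $\op{rank}E(K)=2\op{rank}E(\Q)$ reduces to the two Magma computations $\op{Sel}_3(E_{-1}/\Q)=0$ and $\op{Sel}_3(E_{27}/\Q)=0$ displayed above (more precisely, one uses that $\op{Sel}_3$ is insensitive to isogeny only up to controlled error, but here the computation directly gives vanishing of both $3$-Selmer groups, hence of $\op{Sel}_3(E_{-1}/K)$).

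Next I would simplify the definition of $\mathcal{Q}_{-1}$. Since $a=-1$ has no prime divisors $q$, condition (3) of Definition~\ref{def of Q_a} is vacuous and condition (1) ($\ell\nmid a$) is automatic, so $\mathcal{Q}_{-1}$ is exactly the set of primes $\ell\equiv 1\pmod{18}$ with $\widetilde{E}_{-1}(\F_\ell)[3]=0$, as asserted. For the density, I would apply Proposition~\ref{Q_a density proposition} with $s=0$ (no prime divisor $q\equiv 1\pmod 3$ of $a=-1$), giving $\mathfrak{d}(\mathcal{Q}_{-1})=\frac{1}{8\times 3^{0+1}}=\frac{1}{24}$; in particular $\mathcal{Q}_{-1}$ is infinite. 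Then for any cubefree $m$ divisible only by primes in $\mathcal{Q}_{-1}$, Corollary~\ref{Q_a corollary} yields $\op{rank}E_{(m)}(K)=\op{rank}E_{(m^2)}(K)=0$, which is the claim (noting $E_{(m)}=E_{-m^2}$, $E_{(m^2)}=E_{-m^4}$ in the notation $E_{m^2a}$, $E_{m^4a}$ with $a=-1$).

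There is essentially no hard step here: the theorem is a worked specialization of the general results already proven. The one point requiring care is the justification that the Magma output genuinely certifies $\op{Sel}_3(E_{-1}/K)=0$ rather than just over $\Q$ --- one must spell out that over $K$ the curve $E_{-1}$ becomes isomorphic to its twist, so $\op{Sel}_3(E_{-1}/K)$ decomposes (via the $\op{Gal}(K/\Q)$-action on a $\Z/3\Z$-module, using Lemma~\ref{lemma V^G=0 implies V=0}) into pieces controlled by $\op{Sel}_3(E_{-1}/\Q)$ and $\op{Sel}_3(E_{27}/\Q)$, both of which vanish. I would state this as a short remark and then conclude, since everything else is a direct substitution into Corollary~\ref{Q_a corollary} and Proposition~\ref{Q_a density proposition}.
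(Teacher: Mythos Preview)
Your approach is essentially the same as the paper's: both invoke Theorem~\ref{section 4 basic thm}/Corollary~\ref{Q_a corollary} for the rank statement and Proposition~\ref{Q_a density proposition} with $s=0$ for the density $1/24$, after checking $\op{Sel}_3(E_{-1}/K)=0$ via the two Magma computations over $\Q$. One small correction: your appeal to Lemma~\ref{lemma V^G=0 implies V=0} for the decomposition of $\op{Sel}_3(E_{-1}/K)$ is misplaced, since that lemma concerns a $p$-group acting on an $\F_p$-vector space, whereas here $\op{Gal}(K/\Q)\simeq\Z/2\Z$ acts on an $\F_3$-module; the correct justification is simply that $2$ is prime to $3$, so restriction--corestriction splits $\op{Sel}_3(E_{-1}/K)$ into its $\pm 1$-eigenspaces, identified with $\op{Sel}_3(E_{-1}/\Q)$ and $\op{Sel}_3(E_{27}/\Q)$ respectively.
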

\begin{proof}
    The result follows from Theorem \ref{section 4 basic thm} and Proposition \ref{Q_a density proposition}, see above for more details.
\end{proof}

We remark that there are 15 primes in $\mathcal{Q}$ that are $<1000$, given as follows \[\mathcal{Q}\cap [1, 1000]=\{19,127, 163, 199, 271, 307, 379, 487, 523, 631, 739, 811, 883, 919, 991\}.\] The Theorem \ref{rank zero main theorem 4.10} implies in particular that for any of the primes $\ell\in \mathcal{Q}\cap [1, 1000]$ above, the ranks of $E_{-\ell^2}(K)$ and $E_{-\ell^4}(K)$ are both zero. This has been checked by the authors using Magma. 
\subsection{Numerical data}\label{numerical data section}
\par In the table \ref{only table} below (on the next page), we collect some numerical data for the values of $a\in [-20, 20]$ satisfying the conditions of Theorem \ref{main theorem of section 3}, along with the density of $\mathcal{Q}_a$. The data shows that Theorem \ref{main thm of paper} applies to \[a\in \{-17, -16,-14, -10, -9, -8, -6, -5, -1, 6,7,8, 13, 14, 20 \}.\] Thus, the result applies to $15$ out of a total of $34$ values considered for $a$. All computations were done on Magma.
\clearpage
\begin{table}[h]
    \centering
    \begin{tabular}{|c|c|c|c|c|c|} \hline
      \multirow{2}{*}{$a$} & \multirow{2}{*}{$\op{Sel}_3(E_a/\Q)$}  & \multirow{2}{*}{$\op{Sel}_3(E_{-27a}/\Q)$} & \multirow{2}{*}{$\op{Sel}_3(E_a/K)$} & \multirow{2}{*}{$s$} & \multirow{2}{*}{$\mathfrak{d}(\mathcal{Q}_a)$}\\
      & & & & & \\ \hline
       $-20$ & $\Z/3\Z$  & $\Z/3\Z$ & $(\Z/3\Z)^2$ & $-$ & $-$ \\ \hline
       $-19$ & $\Z/3\Z$  & $\Z/3\Z$ & $(\Z/3\Z)^2$ & $-$ & $-$ \\ \hline
       $-18$ & $\Z/3\Z$  & $\Z/3\Z$ & $(\Z/3\Z)^2$ & $-$ & $-$ \\ \hline
       $-17$ & $0$  & $0$ & $0$ & $0$ & $1/24$ \\ \hline
       $-16$ & $0$  & $0$ & $0$ & $0$ & $1/24$ \\ \hline
      $-15$ & $\Z/3\Z$  & $\Z/3\Z$ & $(\Z/3\Z)^2$ & $-$ & $-$ \\ \hline
      $-14$ & $0$  & $0$ & $0$ & $1$ & $1/72$ \\ \hline
      $-13$ & $\Z/3\Z$  & $\Z/3\Z$ & $(\Z/3\Z)^2$ & $-$ & $-$ \\ \hline
      $-11$ & $\Z/3\Z$  & $\Z/3\Z$ & $(\Z/3\Z)^2$ & $-$ & $-$ \\ \hline
      $-10$ & $0$  & $0$ & $0$ & $0$ & $1/24$ \\ \hline
      $-9$ & $0$  & $0$ & $0$ & $0$ & $1/24$ \\ \hline
      $-8$ & $0$  & $0$ & $0$ & $0$ & $1/24$ \\ \hline
      $-7$ & $\Z/3\Z$  & $\Z/3\Z$ & $(\Z/3\Z)^2$ & $-$ & $-$ \\ \hline
      $-6$ & $0$  & $0$ & $0$ & $0$ & $1/24$ \\ \hline
      $-5$ & $0$  & $0$ & $0$ & $0$ & $1/24$ \\ \hline
      $-4$ & $\Z/3\Z$  & $\Z/3\Z$ & $(\Z/3\Z)^2$ & $-$ & $-$ \\ \hline
      $-2$ & $\Z/3\Z$  & $\Z/3\Z$ & $(\Z/3\Z)^2$ & $-$ & $-$ \\ \hline
      $-1$ & $0$  & $0$ & $0$ & $0$ & $1/24$ \\ \hline
      $2$ & $\Z/3\Z$  & $\Z/3\Z$ & $(\Z/3\Z)^2$ & $-$ & $-$ \\ \hline
      $3$ & $\Z/3\Z$  & $\Z/3\Z$ & $(\Z/3\Z)^2$ & $-$ & $-$ \\ \hline
      $5$ & $\Z/3\Z$  & $\Z/3\Z$ & $(\Z/3\Z)^2$ & $-$ & $-$ \\ \hline
      $6$ & $0$  & $0$ & $0$ & $0$ & $1/24$ \\ \hline
      $7$ & $0$  & $0$ & $0$ & $1$ & $1/72$ \\ \hline
      $8$ & $0$  & $0$ & $0$ & $0$ & $1/24$ \\ \hline
      $10$ & $\Z/3\Z$  & $\Z/3\Z$ & $(\Z/3\Z)^2$ & $-$ & $-$ \\ \hline
      $11$ & $\Z/3\Z$  & $\Z/3\Z$ & $(\Z/3\Z)^2$ & $-$ & $-$ \\ \hline
      $12$ & $\Z/3\Z$  & $\Z/3\Z$ & $(\Z/3\Z)^2$ & $-$ & $-$ \\ \hline
       $13$ & $0$  & $0$ & $0$ & $1$ & $1/72$ \\ \hline
       $14$ & $0$  & $0$ & $0$ & $1$ & $1/72$ \\ \hline
       $15$ & $(\Z/3\Z)^2$  & $(\Z/3\Z)^2$ & $(\Z/3\Z)^4$ & $-$ & $-$ \\ \hline
       $17$ & $(\Z/3\Z)^2$  & $(\Z/3\Z)^2$ & $(\Z/3\Z)^4$ & $-$ & $-$ \\ \hline
       $18$ & $\Z/3\Z$  & $\Z/3\Z$ & $(\Z/3\Z)^2$ & $-$ & $-$ \\ \hline
        $19$ & $\Z/3\Z$  & $\Z/3\Z$ & $(\Z/3\Z)^2$ & $-$ & $-$ \\ \hline
      $20$ & $0$  & $0$ & $0$ & $0$ & $1/24$ \\ \hline
    \end{tabular}
    \caption{Numerical data for Theorem \ref{main thm of paper}}
    \label{only table}
\end{table}
\bibliographystyle{alpha}
\bibliography{references}
\end{document}